\documentclass[12pt,reqno]{amsart}

\usepackage{amssymb, amsmath, amsthm, amsfonts}
\usepackage{mathabx}
\usepackage{thmtools}
\usepackage{mathtools}
\usepackage[centering, margin=1in]{geometry}
\usepackage{enumitem}
\usepackage[space]{cite}
\usepackage{verbatim}
\usepackage[T1]{fontenc}
\usepackage{graphicx}

\usepackage[colorlinks, linkcolor=black, citecolor=black, hypertexnames=false]{hyperref}
\usepackage[capitalize, nameinlink]{cleveref}

\usepackage[nobysame, non-compressed-cites]{amsrefs}

\AtBeginDocument{\def\MR#1{}}

\mathtoolsset{showonlyrefs} 

\declaretheorem[name=Theorem, numberwithin=section]{theorem}

\declaretheorem[name=Lemma, sibling=theorem]{lemma}
\declaretheorem[name=Proposition, sibling=theorem]{prop}
\declaretheorem[name=Corollary, sibling=theorem]{corollary}
\declaretheorem[name=Definition, sibling=theorem]{definition}

\declaretheorem[name=Remark, sibling=theorem]{remark}

\declaretheorem[name=Example, sibling=theorem]{example}

\declaretheorem[name=Claim, numbered=no]{claim*}

\crefname{section}{Section}{Sections}
\Crefname{section}{Section}{Sections}

\crefname{appendix}{Appendix}{Appendices}
\Crefname{appendix}{Appendix}{Appendices}

\numberwithin{equation}{section}

\usepackage{mathrsfs}
\usepackage{amssymb}
\usepackage{dsfont}
\usepackage{verbatim}
\usepackage{url}
\usepackage{mathtools}
\usepackage{etoolbox}
\usepackage{leftidx}

\def\Z{\mathbb {Z}}
\def\Q{\mathbb {Q}}
\def\R{\mathbb {R}}
\def\C{\mathbb {C}}

\DeclareMathOperator{\cond}{cond}

\def\Hom{\qopname\relax o{Hom}}

\DeclareMathOperator{\ord}{ord}

\newcommand{\xdashrightarrow}[2][]{\ext@arrow 0359\rightarrowfill@@{#1}{#2}}

\DeclareRobustCommand
  \rddots{\mathinner{\mkern1mu\raise\p@
    \vbox{\kern7\p@\hbox{.}}\mkern2mu
    \raise4\p@\hbox{.}\mkern2mu\raise7\p@\hbox{.}\mkern1mu}}

\DeclareMathAlphabet{\mathcal}{OMS}{cmsy}{m}{n}

\newcommand\cC{\mathcal{C}}

\newcommand\cE{\mathcal{E}}

\newcommand\cN{\mathcal{N}}
\newcommand\cO{\mathcal{O}}

\newcommand\cS{\mathcal{S}}

\newcommand\sL{\mathscr{L}}

\newcommand\frakh{\mathfrak{h}}

\newcommand\frakm{\mathfrak{m}}
\newcommand\frakn{\mathfrak{n}}

\newcommand\frakr{\mathfrak{r}}

\newcommand\diff{\mathop{}\!\mathrm{d}}

\newcommand\dcross{\diff^{\kern-2pt\raisebox{1pt}{$\times$}}\kern-8pt}

\newcommand\blfootnote{\xdef\@thefnmark{}\@footnotetext}

\DeclareFontFamily{U}{wncy}{}
\DeclareFontShape{U}{wncy}{m}{n}{<->wncyr10}{}
\DeclareSymbolFont{mcy}{U}{wncy}{m}{n}
\DeclareMathSymbol{\Sha}{\mathord}{mcy}{"58}

\renewcommand{\pmod}[1]{\, (\mathrm{mod}\ #1)}

\newcommand{\p}{\mathfrak{p}}

\newcommand{\normt}[1]{\left\lVert#1\right\rVert_2}

\renewcommand{\a}{\mathfrak a}
\renewcommand{\b}{\mathfrak b}
\renewcommand{\c}{\mathfrak c}
\renewcommand{\d}{\mathfrak d}
\newcommand{\e}{\mathfrak e}
\newcommand{\f}{\mathfrak f}
\newcommand{\g}{\mathfrak g}
\renewcommand{\r}{\mathfrak r}
\newcommand{\s}{\mathfrak s}
\renewcommand{\q}{\mathfrak q}
\renewcommand{\u}{\mathfrak u}
\newcommand{\h}{\mathfrak h}

\newcommand{\tchi}{\tilde{\chi}}

\def\sumstar{\operatornamewithlimits{\sum\nolimits^\star}}
\def\sumflat{\operatornamewithlimits{\sum\nolimits^\flat}}

\newcommand*{\bbone}{\text{\usefont{U}{bbold}{m}{n}1}}

\begin{document}

\author{Alexandre de Faveri}
\address{EPFL SB MATH, Station 10, 1015 Lausanne, Switzerland}
\email{\url{alexandre.defaveri@epfl.ch}}

\title{Optimal large sieve for fixed order characters}

\begin{abstract}
    Let $n \geq 3$. We improve the large sieve inequality for $n$-th order Hecke characters over any number field containing the $n$-th roots of unity. Our bound is expected to be optimal in all ranges of parameters. Several applications are deduced, including a tight upper bound for the second moment of cubic characters ordered by conductor, asymptotics with lower order terms for the cubic second moment over squarefree moduli, and improved moment and zero density estimates for every $n \geq 3$.

\end{abstract}

\maketitle


\section{Introduction}

The influential quadratic large sieve inequality of Heath-Brown \cite{HB95} states that
\begin{equation}\label{eq:quadratic_large_sieve}
    \sum_{\substack{0 < a \leq A \\ (a, 2)=1}} \mu^2(a) \Bigg| \sum_{\substack{0 < b \leq B \\ (b, 2)=1}} \mu^2(b) \lambda_b \Big(\frac{a}{b}\Big)_2\Bigg|^2 \ll_\varepsilon (AB)^\varepsilon (A+B) \normt{\lambda}^2
\end{equation}
for any complex-valued sequence $\{\lambda_b\}_{b\in \Z}$, $A, B \geq 1$, and $\varepsilon > 0$. This fundamental bound has had striking applications, for instance to moments and non-vanishing of $L$-functions \cites{Sou00, CS02, SY10, RS15, DW21, Li24} and to arithmetic statistics \cites{FK07, FK10, BHB20}. 

In this paper, we study versions of \eqref{eq:quadratic_large_sieve} for higher order characters. Those were pioneered by Heath-Brown \cite{HB00} for cubic characters, extended to arbitrary order by Blomer, Goldmakher, and Louvel \cite{BGL14}, and also used for arithmetic applications \cites{Kly20, DDDS24, CDD26, DIPP26}. Our main result is a large sieve inequality which should be optimal in logarithmic scale for characters of all orders.

\subsection{Results} 

Let $n \geq 2$ be an integer and $K$ be a number field containing the $n$-th roots of unity, with ring of integers $\cO$ and absolute norm $\cN(\cdot)$. Let $S$ be a finite set of finite places of $K$, containing the places dividing $n$ and such that the ring of $S$-integers has class number $1$. Denote by $I(S)$ the set of integral ideals of $\cO$ coprime with $S$. 

Let $\{\chi_\a\}_{\a \in I(S)}$ be a family of $n$-th order Hecke characters\footnote{We fix the data used to construct this family (including $n, K, S$), so implied constants may depend on it throughout the paper.} introduced in \cites{FF04, FHL03} and defined (for $n\geq 3$) in \cref{sec:background}. Consider
\begin{equation*}
    \Xi_n(A, B) := \sup_{\normt{\lambda} = 1} \sumstar_{\substack{\cN(\a) \leq A}} \Big| \sumstar_{\substack{\cN(\b) \leq B}} \lambda_\b \chi_\a(\b) \Big|^2
\end{equation*}
and 
\begin{equation}\label{eq:Theta_n_def}
    \Theta_n(A, B) := \sup_{\normt{\lambda} = 1} \sumflat_{\substack{\cN(\a) \leq A}} \Big| \sumflat_{\substack{\cN(\b) \leq B}} \lambda_\b \chi_\a(\b) \Big|^2.
\end{equation}
Here and henceforth, $\sumstar$ denotes summation over squarefree ideals of $I(S)$, while $\sumflat$ denotes summation over $n$-th power free\footnote{That is, ideals not divisible by the $n$-th power of a prime ideal of $\cO$.} ideals of $I(S)$. Observe that
\begin{equation}\label{eq:trivial_inequality}
    \Xi_n(A, B) \leq \Theta_n(A, B).
\end{equation}

Heath-Brown's quadratic large sieve \cite{HB95} and its generalization to number fields by Goldmakher and Louvel \cite{GL13} (see also \cite{Ono09}) give
\begin{equation}\label{eq:quadratic_bound}
    \Xi_2(A, B) \ll_\varepsilon (AB)^\varepsilon (A+B).
\end{equation}
This bound is optimal, apart from the term $(AB)^\varepsilon$. We refer to \cites{Li24, SS24, Liu26} for refinements.

For $n \geq 3$, the best known upper bound is
\begin{equation}\label{eq:BGL_bound}
    \Xi_n(A, B) \ll_\varepsilon (AB)^\varepsilon (A+B + (AB)^{2/3}).
\end{equation}
This is due to Heath-Brown \cite{HB00} for $n = 3$ with $K = \Q(\zeta_3)$, and to Blomer, Goldmakher, and Louvel \cite{BGL14} in full generality (see also \cites{BY10, GZ12}). It was a folklore belief that the ``fully orthogonal'' bound \eqref{eq:quadratic_bound} should also hold for $\Xi_n(
A, B)$ when $n \geq 3$. However, this was disproved for $n=3$, assuming the generalized Riemann hypothesis (GRH), by Dunn and Radziwi{\l}{\l} \cite{DR24}. More precisely, they showed that
\begin{equation}\label{eq:cubic_lower_bound}
    \Xi_3^{\text{HB}}(A, B) \gg_\varepsilon (AB)^{-\varepsilon} (A+B + (AB)^{2/3}),
\end{equation}
where $\Xi_3^{\text{HB}}$ denotes the operator norm for the specific cubic family (with $K=\Q(\zeta_3)$) considered by Heath-Brown in \cite{HB00}, which we describe in \cref{ex:cubic}. 

Recent work of the author with Dunn and Hoffstein \cite{DDH26} established the lower bound \eqref{eq:cubic_lower_bound} unconditionally, and also proved an unconditional lower bound for $n=4$ of the form
\begin{equation}\label{eq:quartic_lower_bound}
    \Xi_4^{\text{DDH}}(A, B) \gg_\varepsilon (AB)^{-\varepsilon} (A + B + A^{3/4}B^{1/2} + A^{1/2}B^{3/4}),
\end{equation}
where $\Xi_4^{\text{DDH}}$ denotes the operator norm for a natural quartic family (with $K=\Q(i)$) described in \cref{ex:quartic}. These lower bounds are obtained from biases coming from higher order Gauss sums, which are detected via Rankin--Selberg theory \cite{Zag81} applied to metaplectic forms \cite{DDH26}*{Section 5}. The same strategy should extend to general $n \geq 3$, and the resulting heuristic lower bounds led us to conjecture \cite{DDH26}*{(1.8)} that
\begin{equation}\label{eq:conjecture}
    \Xi_n(A, B) = (AB)^{o(1)} (A + B + A^{1-1/n} B^{2/n} + A^{2/n} B^{1-1/n}) \quad \text{ as } \quad \min(A, B) \to \infty.
\end{equation}

Our main result proves the upper bound part of this conjecture, in a stronger form.

\begin{theorem}[Large sieve upper bound]\label{thm:main}
    Let $n \geq 3$. Then for every $A, B \geq 1$ and $\varepsilon > 0$ we have
    \begin{equation*}
        \Theta_n(A, B) \ll_\varepsilon (AB)^\varepsilon (A + B + A^{1-1/n}B^{2/n} + A^{2/n}B^{1-1/n}).
    \end{equation*}
\end{theorem}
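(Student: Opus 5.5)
We propose to follow the Heath-Brown / Blomer--Goldmakher--Louvel recursive strategy, but feed into it the sharper structure of the dual sums coming from $n$-th order Gauss sums. The first step is the routine reductions. By duality, $\Theta_n(A,B)$ equals the same norm with the roles of $\a,\b$ swapped up to $n$-th power reciprocity. Reciprocity only introduces a bounded twist depending on $S$-congruence classes, and this twist is removed by splitting into finitely many classes. It therefore suffices to bound $\Theta_n(A,B)$ for $A\le B$, by an expression symmetric in the conjectured form. Using multiplicativity, we write an $n$-th power free $\b$ as $\b_1\b_2^2\cdots\b_{n-1}^{n-1}$ with squarefree, pairwise coprime factors, so $\chi_\a(\b)=\prod_j \chi_\a(\b_j)^j$. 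Each piece is again a character of order dividing $n$, attached to a family of the same type. This reduces $\Theta_n$ to sums of $\Xi$-type norms over squarefree moduli at the cost of $(AB)^\varepsilon$, via Cauchy--Schwarz and H\"older in the factors. Characters of smaller order $d\mid n$ can be handled inductively, since $A^{1-1/d}B^{2/d}$ is dominated appropriately for $d<n$ when $d\ge 3$, while $d=2$ is covered by the quadratic bound \eqref{eq:quadratic_bound}.

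The second step is the main analytic input. Expand the square over $\a$ with a smooth weight and apply Poisson summation in $\a$ over the lattice $\cO$, modulo $\b_1\b_2$. The diagonal $\b_1=\b_2$ gives $A$. The off-diagonal is a sum over dual frequencies $k$ of size up to $\cN(\b_1\b_2)/A\approx B^2/A$, weighted by complete character sums. By the Chinese remainder theorem these sums factor as products of $n$-th order Gauss sums $g(k,\b)$ of modulus $\cN(\b)^{1/2}$, times $\chi$ evaluated at the coprime parts. The crucial point, and the source of the improvement over \eqref{eq:BGL_bound}, is that we keep the arithmetic of $k$ rather than bounding it trivially. We write $k=k_0k_1^n$ with $k_0$ $n$-th power free. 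Then $g(k,\b)$ equals $\overline{\chi_\b(k_0)}$ times $g(1,\b)$, up to the twist by $k_1$. The dual problem becomes a bilinear form in $(\b,k_0)$ with coefficients $\lambda_\b g(1,\b)\cN(\b)^{-1/2}$, in which the number of $k$ sharing a given $k_0$ is at most about $(B^2/A\,/\cN(k_0))^{1/n}$. Bounding this dual form by $\Theta_n(B,K_0)$ with $K_0\le B^2/A$ and weighting by the multiplicity produces a recursive inequality, roughly
\[
\Theta_n(A,B)\ll (AB)^{\varepsilon}\Bigl(A+\sup_{K\le B^2/A}\frac{A}{B}\Bigl(\frac{B^2}{AK}\Bigr)^{1/n}\Theta_n(K,B)\Bigr)
\]
with suitable dyadic refinements. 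The $n$-th power free summation in $\Theta_n$ is needed precisely so that this recursion closes on itself, which explains why the theorem is stated for $\Theta_n$ rather than $\Xi_n$.

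The final step is to solve the recursion. We insert an assumed bound $\Theta_n(X,Y)\le C(XY)^{\varepsilon}F(X,Y)$, with $F(X,Y)=X+Y+X^{1-1/n}Y^{2/n}+X^{2/n}Y^{1-1/n}$, and check that the right side reproduces $F(A,B)$ with a smaller constant. The term $K^{2/n}B^{1-1/n}$ at $K=B^2/A$, after multiplication by the factor $\tfrac{A}{B}\bigl(\tfrac{B^2}{AK}\bigr)^{1/n}$, should yield exactly $A^{1-1/n}B^{2/n}$. Induction on $AB$ then applies, starting from the trivial bound $\Theta_n\le AB$ and using the BGL bound \eqref{eq:BGL_bound} to seed the unbalanced ranges. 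I expect the main obstacle to be the second step. One must show that the twisted Gauss sum factor $g(1,\b)$, together with the $k_1^n$ parts and the primes dividing $\gcd(k,\b)$, does not destroy the bilinear structure, so that the dual sum is genuinely controlled by $\Theta_n$ of the new sizes with the multiplicity gain. The delicate cases are the non-coprime terms and the imprimitive characters arising from $k_1$, and I would handle them by a further factorization and by Hölder's inequality.
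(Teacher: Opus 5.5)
Your displayed recursion is the right heuristic; it is the paper's sketch with the dual variable written as $\d\e^n$. The proof has a genuine gap, however, exactly where the new work lies: the inner variable $\b$ is $n$-th power free, not squarefree.

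\textbf{Step 1 is not available.} You assert that $\Theta_n$ reduces to squarefree ($\Xi$-type) norms at cost $(AB)^\varepsilon$ via Cauchy--Schwarz and H\"older. After writing $\b=\b_1\b_2^2\cdots\b_{n-1}^{n-1}$, the factors $\chi_\a(\b_2)^2\cdots\chi_\a(\b_{n-1})^{n-1}$ depend on the outer variable $\a$, so they cannot be absorbed into the coefficients. Cauchy--Schwarz over the tuple $(\b_2,\dots,\b_{n-1})$ loses its length, which is a power of $B$ (e.g.\ $B^{1/3}$ when $\cN(\b_1)\asymp\cN(\b_2)\asymp B^{1/3}$). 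No H\"older inequality separates a bilinear form whose coefficient $\lambda_\b$ is attached to the whole product. This is the same kind of loss that produces the $(AB)^{2/3}$ term in \eqref{eq:BGL_bound}. If such a reduction held, the non-splitting root numbers the paper has to fight would never appear. Two side claims are also wrong:
\begin{itemize}
\item Passing to order $d\mid n$ goes the wrong way: at $A=B$ the order-$d$ term $A^{1+1/d}$ exceeds the target $A^{1+1/n}$ when $d<n$.
\item \eqref{eq:BGL_bound} bounds $\Xi_n\le\Theta_n$, so it cannot seed an induction for $\Theta_n$.
\end{itemize}

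\textbf{Without Step 1, Step 2 fails as written.} For $\b\neq\b^\star$ the character $\chi_\b$ has conductor $\b^\star$. So the complete sums modulo $\b$ vanish unless $\b^\sharp$ divides the frequency; in particular $g(1,\b)=0$, and your coefficients $\lambda_\b g(1,\b)\cN(\b)^{-1/2}$ discard every non-squarefree $\b$. Moreover, by \cref{lem:root_number} the root number of $\tchi_{\b_1,\b_2}$ is $\eta\,\beta_{\b_1}\overline{\beta_{\b_2}}\,\chi_{\b_1^\sharp}(\b_2^\star)\overline{\chi_{\b_2^\sharp}(\b_1^\star)}$, which couples $\b_1$ and $\b_2$. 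Hence the off-diagonal does not recombine into $\sum_k|\sum_\b\lambda'_\b\chi_\b(k)|^2$. You flag this as the main obstacle, but ``further factorization and H\"older'' does not resolve it.

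The missing idea is to exploit the conductor drop:
\begin{itemize}
\item Write $\b_i=\b_i^\sharp\b_i^\star$ with $\b_i^\sharp\sim B_i$. The dual length is then $B^2/(AB_1B_2)$ rather than $B^2/A$.
\item By reciprocity the cross terms become $\chi_{\b_i^\star}(\b_j^\sharp)$.
\item Merge $\b_2^\sharp\a'$ (resp.\ $\b_1^\sharp\a'$) into a single variable, then apply Cauchy--Schwarz over $(\b_1^\sharp,\b_2^\sharp,\a')$.
\end{itemize}
This gives \cref{lem:Poisson_K_3}, namely $\frac{A(B_1B_2)^{1/2}}{B}\sL_2(K_1,L_1)^{1/2}\sL_2(K_2,L_2)^{1/2}$ with $K_i\le B^2/(AB_i)$ and $L_i\le B/B_i$. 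One must then check that the shortened kernels beat the factor $(B_1B_2)^{1/2}$, which the paper does by inserting the inductive bound $(E_\alpha)$.

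\textbf{Step 3 cannot be an induction on $AB$ reproducing $F$ ``with a smaller constant''.} The recursion is self-dual at $A=B$, and each pass loses a constant and a factor $(AB)^\varepsilon$. The paper instead:
\begin{itemize}
\item iterates $(E_\alpha)\Rightarrow(E_{2-\frac{n-1}{n\alpha-1}})$ starting from $(E_2)$;
\item uses \cref{lem:monotonicity} to inflate $A$ when $A\le B^{2-\frac{n}{n\alpha-1}}$;
\item lets $\alpha\to 1+\frac{1}{n}$;
\item removes the remaining $B^{1+1/n}$ term by symmetry (\cref{lem:symmetry}).
\end{itemize}
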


In view of \eqref{eq:trivial_inequality} and \eqref{eq:conjecture}, we expect the bound of \cref{thm:main} to be optimal in logarithmic scale. The matching lower bounds \eqref{eq:cubic_lower_bound} and \eqref{eq:quartic_lower_bound} establish this optimality for the corresponding families with $n=3$ and $n=4$.

\subsection{Some applications}

For each $\a \in I(S)$, let $L(s, \chi_\a)$ be the Hecke $L$-function of $\chi_\a$ and $\cond(\chi_\a)$ denote its conductor. Two families of primitive $n$-th order Hecke characters $\chi_\a$ are typically considered in the literature: the family of squarefree $\a \in I(S)$, and the family of $n$-th power free $\a \in I(S)$, ordered by $\cN(\a)$ in both cases. 

For the second family, it would be more natural to order by the norm $\cN(\cond(\chi_\a))$ of the conductors. This ordering has so far been mostly neglected, in part because it is difficult to analyze. For instance, the large sieve (even with our improvements) does not give a tight upper bound for the second moment of $L(s, \chi_\a)$ with this ordering. 

In our first application, we use some structural features of the case $n=3$ to establish a tight upper bound for the cubic second moment ordered by conductor. 

\begin{theorem}[Second moment for cubic characters ordered by conductor]\label{thm:cubic_second_moment}
    Let $n=3$. Let $\psi$ be a unitary Hecke character over $K$. Then for every $A, B \geq 1$ and $\varepsilon > 0$ we have
    \begin{equation*}
        \sum_{\substack{\a \in I(S) \\ \cN(\a) \leq A}} \sum_{\substack{\b \in I(S) \\ \cN(\b) \leq B}} \mu^2(\a\b) \cdot |L(\tfrac{1}{2}, \psi\chi_{\a\b^2})|^2 \ll_{\psi, \varepsilon} (AB)^{1+\varepsilon}.
    \end{equation*}
    In particular, for $X \geq 1$,
    \begin{equation*}
        \sumflat_{\substack{\cN(\cond(\chi_\g)) \leq X}} |L(\tfrac{1}{2}, \chi_\g)|^2 \ll_\varepsilon X^{1+\varepsilon}.
    \end{equation*}
\end{theorem}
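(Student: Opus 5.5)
We plan to reduce the conductor-ordered cubic second moment to a large sieve for the twisted family $\a \mapsto \chi_{\a\b^2}$, via an approximate functional equation. Since $\mu^2(\a\b)=1$, the character $\chi_{\a\b^2}$ is primitive up to the fixed places in $S$, and its conductor has norm $\asymp \cN(\a\b)$. The key structural feature of $n=3$ is that $\chi_{\b^2} = \overline{\chi_\b}$ up to a harmless twist (Hecke character data at $S$ and a correction from cubic reciprocity). Hence $\chi_{\a\b^2} \approx \chi_\a\overline{\chi_\b}$, and by cubic reciprocity $\chi_\a(\c)\overline{\chi_\b}(\c)$ can be rewritten in terms of $\chi_\c(\a)$ and $\chi_\c(\b)$, up to a finite set of characters of bounded modulus that we absorb by splitting into residue classes.

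First we apply the approximate functional equation:
\[
|L(\tfrac12,\psi\chi_{\a\b^2})|^2 \ll \sum_{\c_1,\c_2} \frac{|\ldots|}{\cN(\c_1\c_2)^{1/2}} V\Bigl(\frac{\cN(\c_1\c_2)}{\cN(\a\b)}\Bigr).
\]
More conveniently, after a smooth dyadic decomposition and Cauchy–Schwarz, the contribution is dominated by $\max_{C \ll (AB)^{1+\varepsilon}}$ of
\[
C^{-1}\sum_{\a,\b}\Bigl|\sum_{\cN(\c)\sim C} \psi(\c)\chi_{\a\b^2}(\c)\Bigr|^2 ,
\]
together with a symmetric dual term. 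The gamma factor is essentially constant over dyadic ranges of $\cN(\a\b)$, and the root number has modulus one.

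Next we write $\c = \c_0 \c_1^2\c_2^3$, where $\c_0\c_1$ is squarefree. The cube part is harmless, since $\chi_{\a\b^2}(\c_2^3)$ is the principal character wherever it is defined, and summing over $\c_2$ costs only $C^\varepsilon$. Using reciprocity, we convert the inner sum to $\sum_\c \lambda_\c \chi_\c(\a)\chi_{\c}(\b)^{2}$, i.e. $\sum_\c\lambda_\c\chi_\c(\a\b^2)$. The pair $(\a,\b)$ then indexes the cube-free ideal $\g=\a\b^2$ of norm at most $AB^2$; this map is injective. Applying \cref{thm:main} naively would give $\Theta_3(AB^2, C)$, which is too lossy because $\cN(\g)$ can reach $AB^2$ rather than $AB$.

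Instead we use a hybrid argument. We split by $\b$ and, for each fixed $\b$, bound the sum over $\a$ by duality. The product $\chi_{\c}(\b^2)$ is absorbed into the coefficients $\lambda_\c$, so the sum becomes
\[
\sum_\b \Theta_3(A,C)\,\|\lambda\|^2 \ll B\,(A + C + A^{2/3}C^{2/3}+A^{2/3}C^{2/3})\,C^{1+\varepsilon}.
\]
Dividing by $C$, this gives roughly $B(A+C+(AC)^{2/3})$, which is not good enough when $C\approx AB$. The cure is symmetry: whenever $A<B$, we reverse the roles of $\a$ and $\b$ (using $\chi_{\a\b^2}=\overline{\chi_{\a^2\b}}$ up to twist), and we also choose the length of the approximate functional equation unevenly, i.e. $\cN(\c_1)\approx \cN(\c_2)\approx (AB)^{1/2}$ rather than a single sum of length $AB$. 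With the square written as a product of two sums of length $(AB)^{1/2}$, the large sieve applied to the pair of variables $(\a,\b)$ jointly, via $\Theta_3(AB, (AB)^{1/2}) \ll (AB)^{1+\varepsilon}$, gives the optimal bound. Indeed, for $N=AB$ and $C=N^{1/2}$, the terms satisfy $N^{2/3}C^{2/3} = N$ and $N^{2/3}C^{1/3}\le N$.

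The main obstacle is making this joint treatment rigorous. We must view $\chi_{\a\b^2}(\c)$ as $\chi_\c(\a\b^2)$ with $\a\b^2$ ranging over cube-free ideals of norm up to $AB^2$, yet still gain from the true conductor size $AB$. The approach we would try first is to express $\chi_\c(\b^2)=\overline{\chi_\c(\b)}$ and apply \cref{thm:main}'s bilinear structure (the proof is presumably via an $n$-th power free large sieve $\Theta$) to the squarefree variable $\a\b$ of norm $\le AB$, with the conjugation on the $\b$-part handled by a Galois twist $\sigma$ on coefficients. That is, we use $\Theta_3$ for the family $\chi_{\sigma(\cdot)}$, which is of the same type.

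Finally, the "in particular" statement follows because every cube-free $\g$ factors uniquely as $\a\b^2$ with $\a\b$ squarefree, and $\cN(\cond\chi_\g)\asymp\cN(\a\b)$. The second moment is therefore bounded by a dyadic sum over $AB\le X$ of the first estimate, which gives $X^{1+\varepsilon}$.
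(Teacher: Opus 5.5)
Your outer reduction is fine and matches the paper: the approximate functional equation for $L(\tfrac12,\psi\chi_{\a\b^2})$ itself (length $\approx(AB)^{1/2}$, then squared; drop your first formulation with squared sums of length up to $AB$), the symmetry $\chi_\q(\a\b^2)=\chi_\q(\a)\overline{\chi_\q(\b)}$ allowing $A\le B$, and the deduction of the second statement. There is a genuine gap, however. Everything rests on the conductor-aspect large sieve
\begin{equation*}
\sum_{\substack{\a\in I(S)\\ \a\sim A}}\sum_{\substack{\b\in I(S)\\ \b\sim B}}\Big|\sumstar_{\q\sim Q}\lambda_\q\chi_\q(\a\b^2)\Big|^2\ll_\varepsilon (ABQ)^{\varepsilon}\,AB\,\normt{\lambda}^2\qquad\text{for } Q\le (AB)^{1/2+\varepsilon},
\end{equation*}
and your proposal does not prove it. Merging $(\a,\b)$ into one variable $\a\b$ of norm $\le AB$ and invoking $\Theta_3(AB,(AB)^{1/2})$ is not legitimate. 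As a function of $\q$, $\chi_\q(\a)\overline{\chi_\q(\b)}$ differs from $\chi_\q(\a\b)$ by the factor $\chi_\q(\b)$. That factor depends on $\b$, so it cannot be absorbed into $\lambda_\q$. The characters $\chi_{\a\b^2}$ are simply not the family $\{\chi_\g:\cN(\g)\le AB\}$ that \cref{thm:main} controls.

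A Galois twist does not repair this. Even when $K$ has an automorphism $\sigma$ with $\sigma(\zeta_3)=\zeta_3^{-1}$, it acts on both entries of the symbol, $\sigma\big(\big(\frac{\alpha}{\p}\big)_3\big)=\big(\frac{\sigma\alpha}{\sigma\p}\big)_3$. So it cannot conjugate $\chi_\q(\b)$ while leaving $\q$ and $\chi_\q(\a)$ fixed. Changing the embedding $K\hookrightarrow\C$ instead conjugates all factors at once. Your fixed-$\b$ (or fixed-$\a$) route loses $\min(A,B)^{1/3}$, as you observed.

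The missing idea is Poisson summation in the longer variable, which is how the paper proves this estimate (\cref{prop:cubic_conductor_bound}).

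\begin{itemize}
\item With $A\le B$, open the square, sort by $\d=(\q_1,\q_2)$, and apply \cref{lem:general_Poisson_formula} to $\sum_\b W(\cN(\b)/B)\,\overline{\tchi_{\q_1,\q_2}(\b)}$. The conductor is $\q_1\q_2$, of norm $\approx Q^2\approx AB$.
\item The root number splits because $\q_1,\q_2$ are squarefree. This is why one writes $\frakn=\s\q\r^2$ with $\q$ squarefree and discards $\r^2$ using the decay $\cN(\frakn)^{-1/2}$. Your cube-free variable $\c_0\c_1^2$ would make \cref{lem:root_number} produce cross terms.
\item The dual sum has length $\approx Q^2/B\le A$ (up to $(AB)^\varepsilon$). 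It carries the conjugated character $\chi_{\q_1}\overline{\chi_{\q_2}}(\e)$, which combines with $\chi_{\q_1}\overline{\chi_{\q_2}}(\a)$ into a single variable $\u=\a\e$ of norm $\ll A^2$.
\item Re-square by Cauchy--Schwarz and use $\sL_2(X,Y)\ll_\varepsilon (XY)^\varepsilon(X+X^{1/3}Y+(XY)^{2/3})$, which follows from \cref{lem:n-th_powers} and \cref{thm:main}. With the factor $B/Q$ from Poisson this gives $A^{3/2}B^{1/2}+A^{2/3}B+A^{7/6}B^{5/6}\le AB$ for $Q\approx(AB)^{1/2}$ and $A\le B$.
\end{itemize}

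So the gain from the true conductor size comes from removing the conjugation on $\b$ by duality, not from relabelling variables.
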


The proof of \cref{thm:cubic_second_moment} does not require \cref{thm:main}, but uses similar ideas. It would be interesting to obtain analogues of this moment bound for $n\geq 4$.

Further motivation for proving \cref{thm:cubic_second_moment} comes from a recent observation of Diaconu, Ion, Pa{\char179}ol, and Popa \cite{DIPP26}*{Remark 1.1.3} that such a bound would improve the error term in the second moment over the squarefree cubic family ordered by norm. This leads to our next result, which for the first time detects (when $n=3$) the secondary main term conjectured in 2004 by Diaconu \cite{Dia04}*{Conjecture 4.5} for the second moment of the squarefree family\footnote{For the $n$-th power free family, secondary main terms were detected in \cite{DIPP26} when $n=3$ and $n=4$.}.

Let $L^S(s, \chi_\a)$ denote $L(s, \chi_\a)$ with the local factors at primes in $S$ removed.

\begin{theorem}[Cubic second moment with secondary term]\label{thm:cubic_secondary_term}
    Let $n = 3$. Assume\footnote{This technical condition is inherited from \cite{DIPP26}, ultimately coming from \cite{BB06}.} that $S$ contains all primes ramified in $K/\Q$. Fix a Schwartz function $W:\R \to \R_{\geq 0}$ supported on $(1, 2)$. Then there exist linear polynomials $P_W$ and $Q_W$ such that for any $X \geq 1$ and $ \varepsilon >0$ we have\footnote{Note that our result does not rule out the case $Q_W \equiv 0$.}
    \begin{equation*}
        \sum_{\a \in I(S)} \mu^2(\a) \cdot |L^S(\tfrac{1}{2}, \chi_\a)|^2 \cdot W\Big( \frac{\cN(\a)}{X}\Big) = X P_W(\log{X}) + X^{5/6} Q_W(\log{X}) + O_{\varepsilon}(X^{3/4+\varepsilon}).
    \end{equation*}
\end{theorem}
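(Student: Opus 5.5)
We would deduce \cref{thm:cubic_secondary_term} from the approach of \cite{DIPP26}, replacing the error-term input there with \cref{thm:cubic_second_moment}. The starting point is the double Dirichlet series
\[
Z(s,w) = \sum_{\a \in I(S)} \frac{\mu^2(\a)\,|L^S(s,\chi_\a)|^2}{\cN(\a)^w},
\]
or more precisely the version with $L^S(s,\chi_\a)L^S(\bar s,\bar\chi_\a)$, which DIPP analyze for the cubic family via metaplectic Eisenstein series and functional equations. After Mellin inversion against $W$, the main term $XP_W(\log X)$ comes from the double pole at $w=1$, and the secondary term $X^{5/6}Q_W(\log X)$ comes from the pole at $w=5/6$ predicted by Diaconu \cite{Dia04}. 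Both poles have order at most two, which explains the linear polynomials. By \cite{DIPP26}*{Remark 1.1.3}, their method already yields this expansion once one has a tight bound for the second moment ordered by conductor. So the task is to make that implication explicit.

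First, we would recall from \cite{DIPP26} the meromorphic continuation of the relevant multiple Dirichlet series. That continuation is obtained by grouping the squarefree family together with the non-squarefree $\a$, that is, the full cube-free family. This introduces twisted terms $L(\tfrac12,\psi\chi_{\a\b^2})$, with $\psi$ coming from correction factors at primes dividing both pieces.

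Second, we identify where their error term is lost. It arises from bounding, on a vertical line $\Re w=\sigma$ to the left of $5/6$, sums of shape
\[
\sum_{\cN(\a)\le A}\ \sum_{\cN(\b)\le B} \mu^2(\a\b)\,|L(\tfrac12,\psi\chi_{\a\b^2})|^2 \cdot (\text{weights}).
\]
Previously these were controlled by the large sieve \eqref{eq:BGL_bound} or by convexity, which only lets one shift to about $\sigma = 5/6$ or worse. Inserting \cref{thm:cubic_second_moment}, which gives $(AB)^{1+\varepsilon}$, together with Cauchy--Schwarz and dyadic decomposition, gives polynomial growth in $\Im w$ and absolute convergence in a region reaching $\Re w = 3/4+\varepsilon$.

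Third, we shift the $w$-contour to $\Re w=3/4+\varepsilon$, picking up the residues at $w=1$ and $w=5/6$. The rapid decay of $\widetilde W$ offsets the polynomial growth in $\Im w$, so the remaining integral is $O(X^{3/4+\varepsilon})$.

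The main obstacle is the bookkeeping in the second step. One has to check that, after the functional equations used by DIPP, the dual sums really take the form covered by \cref{thm:cubic_second_moment}: cube-free moduli $\a\b^2$, a fixed twist $\psi$, and a central point (or a vertical line, handled by shifting $\psi$ by $\cN^{it}$ with polynomial dependence). One must also check that the resulting exponent is exactly $3/4$, which should come from balancing the length $X$ against the dual length under the approximate functional equation. We would first try to verify this in the $\Re w$-region reached by the Phragmén--Lindelöf convexity interpolation between $\Re w=1+\varepsilon$ and the reflected region.
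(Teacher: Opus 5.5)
Your overall route is the paper's: feed \cref{thm:cubic_second_moment} into the multiple Dirichlet series framework of \cite{DIPP26}, continue to $\Re w>3/4$ with at most double poles at $w=1$ and $w=5/6$, and shift contours. The trouble is that the step you defer as ``bookkeeping'' is the entire content of the proof, and your plan for it points the wrong way. The exponent $3/4$ does not come from balancing lengths in an approximate functional equation, nor from Phragm\'en--Lindel\"of interpolation. No vertical-line twist $\psi\cN^{it}$ is needed either, which is fortunate: \cref{thm:cubic_second_moment} states no uniformity in $\psi$. The twists $\psi$ are just the finitely many characters of $R_\c$, not characters produced by correction factors, and every $L$-value involved sits exactly at the central point.

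What is missing is the identification of the sum to be controlled, and the short computation that turns it into a conductor-ordered moment. By the reduction in \cite{DIPP26} (equation (18), Propositions 6.3.2 and 7.2.2, and \S 8.1.1 there, used as in their Proposition 8.2.2), it suffices to show that
\[
S_3(\sigma)=\max_{\psi\in\widehat{R_\c}}\sumstar_{\h}\cN(\h)^{2-4\sigma}\sum_{\f\mid\h^2}\sumflat_{(\a,\f)=1}\frac{|L(\frac12,\psi\chi_{\a\f})|^2}{\cN(\a)^{1+\varepsilon}}
\]
converges for $\sigma>3/4$. The only dependence on $\sigma$ is the weight on the squarefree $\h$. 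Since $\f\mid\h^2$ forces $\f^\star\mid\h$, write $\h=\f^\star\g$ with $(\g,\f)=1$.
\begin{itemize}
\item The sum $\sum_{\g}\cN(\g)^{2-4\sigma}$ converges exactly when $\sigma>3/4$; this is the source of the threshold.
\item For $\sigma\geq 3/4+\varepsilon$ the leftover factor satisfies $\cN(\f^\star)^{2-4\sigma}\leq\cN(\f^\star)^{-1-\varepsilon}$.
\end{itemize}
Hence $S_3(\sigma)$ is bounded by the sum of $|L(\frac12,\psi\chi_{\a\f})|^2\,\cN(\a\f^\star)^{-1-\varepsilon}$ over $\f$ and $\a$. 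Two facts then let you regroup by $\g'=\a\f$:
\begin{itemize}
\item $\cN(\a\f^\star)\geq\cN((\a\f)^\star)$;
\item a cube-free $\g'$ has at most $2^{\omega(\g'^\star)}$ coprime factorizations $\g'=\a\f$.
\end{itemize}
This gives $S_3(\sigma)\ll\sumflat_{\g'}|L(\frac12,\psi\chi_{\g'})|^2\,\cN(\g'^\star)^{-1-\varepsilon/2}$. Writing $\g'=\a'\b'^2$ with $\mu^2(\a'\b')=1$, so that $\g'^\star=\a'\b'$, dyadic decomposition and \cref{thm:cubic_second_moment} give convergence.

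The point is that the weight surviving the $\h$-sum is a power of the conductor $\cN(\a\f^\star)$, not of $\cN(\a\f)$. That is precisely why a bound ordered by conductor is the right input here, rather than a norm-ordered large sieve bound.
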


Asymptotics with power-saving error terms for the cubic second moment in \cref{thm:cubic_secondary_term} were first obtained by David, the author, Dunn, and Stucky \cite{DDDS24}, with error term $O_\varepsilon(X^{5/6+\varepsilon})$. Previously, power-saving asymptotics were only available in the quadratic case ($n=2$), after the breakthrough work of Soundararajan \cite{Sou00} in 2000.

Interestingly, the same error term $O_\varepsilon(X^{5/6+\varepsilon})$ for the cubic second moment as in \cite{DDDS24} was also recovered in \cite{DIPP26} using different methods, based on multiple Dirichlet series.

Due to structural reasons, we do not yet have moment bounds as strong as in the cubic case for general $n$. However, we also improve the second moment error term in the squarefree family for every $n\geq 4$.

\begin{theorem}[Improved second moment error term]\label{thm:general_second_moment}
    Let $n \geq 4$. Assume that the base field $K$ contains the $2n$-th roots of unity\footnote{The assumption that $K$ contains the $2n$-th roots of unity (instead of $n$-th roots as before) is a technical artifact of \cite{DIPP26}. The authors observe \cite{DIPP26}*{Remark 1.4.1} that it should be possible to remove this condition.} and that $S$ contains all primes ramified in $K/\Q$. Fix a Schwartz function $W:\R \to \R_{\geq 0}$ supported on $(1, 2)$. Then there exists a linear polynomial $P_{n, W}$ such that for any $X \geq 1$ and $ \varepsilon >0$ we have
    \begin{equation*}
        \sum_{\a \in I(S)} \mu^2(\a) \cdot |L^S(\tfrac{1}{2}, \chi_\a)|^2 \cdot W\Big( \frac{\cN(\a)}{X}\Big) = X P_{n, W}(\log{X}) + O_{\varepsilon}(X^{\sigma_n+\varepsilon}),
    \end{equation*}
    where $\sigma_4 = \frac{56}{65}$ and $\sigma_n = 1-\frac{n-1}{2(n+1)(n-2)}$ for $n \geq 5$.
\end{theorem}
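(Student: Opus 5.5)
The plan is to follow the strategy of \cite{DIPP26} and \cite{DDDS24}: write $|L^S(\tfrac12,\chi_\a)|^2$ via an approximate functional equation, split the $\a$-sum into a main part and a tail, and feed the improved large sieve of \cref{thm:main} into the error analysis. Concretely, for squarefree $\a$ of norm $\asymp X$ we write $|L^S(\tfrac12,\chi_\a)|^2$ as a sum of two terms $\sum_{\b,\c} \chi_\a(\b)\overline{\chi_\a}(\c)(\cN(\b\c))^{-1/2} V(\cN(\b\c)/X)$, with effective length $\cN(\b\c)\ll X^{1+\varepsilon}$ (the root number has modulus one, so both terms are of the same shape).

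The multiple Dirichlet series analysis of \cite{DIPP26} gives the main term $XP_{n,W}(\log X)$ together with an error. This error is controlled by a second-moment-type quantity: in \cite{DIPP26} the error exponent comes from balancing the meromorphic continuation region against a convexity/large-sieve bound for
\[
\sum_{\cN(\a)\le A}\mu^2(\a)\,|L(\tfrac12+it,\psi\chi_\a)|^2 .
\]
Their input for this was \eqref{eq:BGL_bound}. My first step is therefore to isolate, as in \cite{DIPP26}*{Remark 1.1.3}, a statement of the form: if the second moment above is $\ll (AT)^\varepsilon(A+\Delta(A))$, then the error is $X^{\sigma(\Delta)+\varepsilon}$, with explicit dependence on $\Delta$.

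The second step is to bound this moment with \cref{thm:main}. After a dyadic decomposition, the Dirichlet polynomial of length $B\le A^{1/2+\varepsilon}\cdot(\text{conductor factors})$ is reduced to $n$-th power free $\b$. This is done by pulling out the $n$-th power part $\b=\b_1\b_2^n$, on which $\chi_\a(\b_2^n)=1$ up to coprimality, and summing $\b_2$ trivially. Applying $\Theta_n(A,B)$ with $B\approx A^{1/2}$ then gives the bound
\[
A + A^{1-1/n}\cdot A^{1/n} + A^{2/n}A^{(1-1/n)/2} \ll A,
\]
since $B$ is small. The genuine savings therefore arise in the longer ranges, i.e.\ in the dual sums after Poisson/functional equation, where the lengths are unbalanced. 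I would insert the new bound $A^{1-1/n}B^{2/n}+A^{2/n}B^{1-1/n}$ in place of $(AB)^{2/3}$ there and reoptimise the cut parameter $Y$ separating the two treatments: the multiple Dirichlet series for $\cN(\a)$ in the smooth range versus the large sieve for the tail.

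The final step is the optimisation producing $\sigma_n=1-\frac{n-1}{2(n+1)(n-2)}$ for $n\ge5$. For $n=4$ a different term dominates, giving $56/65$, so I would handle that case separately by comparing which of the two cross terms $A^{1-1/n}B^{2/n}$ and $A^{2/n}B^{1-1/n}$ is active at the optimum.

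The main obstacle is the first step: extracting from \cite{DIPP26} the precise way their error depends on the large sieve exponent, including the shifted twists $\psi\chi_\a$ and the imaginary parts. The large sieve must be applied uniformly in $t$ with polynomial loss, which \cref{thm:main} allows since $\lambda_\b$ is arbitrary.
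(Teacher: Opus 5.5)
There is a genuine gap: the quantity your reduction targets is not the bottleneck. Your own computation with $B\approx A^{1/2}$ shows that the squarefree moment $\sum_{\cN(\a)\le A}\mu^2(\a)|L(\tfrac12+it,\psi\chi_\a)|^2$ is already $\ll A^{1+\varepsilon}$ by \eqref{eq:BGL_bound}, since $(A\cdot A^{1/2})^{2/3}=A$. So a statement of the form ``moment $\ll A+\Delta(A)$ implies error $X^{\sigma(\Delta)+\varepsilon}$'' already holds with $\Delta=0$ and cannot improve anything. Your subsequent appeal to ``longer ranges'' and a cut parameter $Y$ does not name a concrete sum. (The observation in \cite{DIPP26}*{Remark 1.1.3} that you cite is about cube-free characters ordered by conductor, not the squarefree family.)

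In the method of \cite{DIPP26}, the error exponent is determined by how far left $Z_1^S(\tfrac12,\tfrac12,w)$ continues with polynomial growth. As in \cite{DIPP26}*{Proposition 8.2.2}, this reduces to convergence of $S_n(\sigma)=\max_\psi\sumstar_{\h}\cN(\h)^{(n-1)(1-\sigma)-2\sigma}\sum_{\f\mid\h^{n-1}}\sumflat_{(\a,\f)=1}|L(\tfrac12,\psi\chi_{\a\f})|^2\cN(\a)^{-1-\varepsilon}$. This is a weighted second moment over the $n$-th power free family $\u=\a\f$. Summing out $\h=\f^\star\g$ (which needs $\sigma>\frac{n}{n+1}$) and writing $\u=\q_1\q_2^2\cdots\q_{n-1}^{n-1}$ with $\q_i\sim Q_i$, one needs $M_n(\vec{Q})\ll Q_1^{1+\varepsilon}(Q_2\cdots Q_{n-1})^{\alpha_n+\varepsilon}$. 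Then $\sigma_n$ is exactly the solution of $(n+1)\sigma-(n-1)=\alpha_n$.

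The missing idea is how to bound $M_n(\vec{Q})$. Fix every $\q_i$ except the one of largest norm, say $\q_j$, and sum the others trivially; this loses $Q/Q_j$, where $Q=\prod_iQ_i$ is the conductor size. Then apply the large sieve to $\chi_{\q_j}^j$, an $m$-th order family with $m=n/(n,j)$, against a Dirichlet polynomial of length $Q^{1/2}$. This is where unbalanced lengths genuinely occur. It gives $M_n\ll Q^{1+\varepsilon}(1+Q^{1/2}/Q_j+(Q/Q_j)^{1/3})$. Using $Q_1\le Q_j$ and $R:=Q_2\cdots Q_{n-1}\le Q_j^{n-2}$, this yields $\alpha_n=\frac{3n-7}{2(n-2)}$ for $n\ge5$.

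For these $n$ the limiting term $Q^{1/2}/Q_j$ comes from the term $B$ of the large sieve. So only \eqref{eq:BGL_bound} and \eqref{eq:quadratic_bound} are used, and \cref{thm:main} plays no role. Your plan to obtain $\sigma_n$ by substituting $A^{1-1/n}B^{2/n}+A^{2/n}B^{1-1/n}$ for $(AB)^{2/3}$ therefore cannot be how the exponent arises. Only for $n=4$ does \cref{thm:main} enter: it is applied at $j=1$ and $j=3$ and combined with the quadratic bound at $j=2$, in a case analysis on the sizes of $Q_1,Q_2,Q_3$. This gives $\alpha_4=\frac{17}{13}$ and hence $\sigma_4=\frac{56}{65}$.
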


Asymptotics with power-saving error terms for the second moment of \cref{thm:general_second_moment} were first obtained by Castillo, the author, and Dunn \cite{CDD26} for $n=4$, and by Diaconu, Ion, Pa{\char179}ol, and Popa \cite{DIPP26} for all $n$. In \cite{CDD26}, we were motivated by applications to ranks of elliptic curves, and for simplicity gave an unspecified power-saving. In \cite{DIPP26}*{Theorem A}, the authors obtained explicit error terms for all $n$. Our \cref{thm:general_second_moment} improves these error terms for every $n$. 

We also improve the zero density estimates of \cite{BGL14}*{Corollary 1.6} for $n$-th order characters.

\begin{corollary}[Improved zero density estimate]\label{thm:zero_density}
    For $\tfrac{1}{2} < \sigma < 1$, $T > 1$ and $\a \in I(S)$, let $N(\sigma, T, \a)$ denote the number of zeros $\rho$ of $L(s, \chi_{\a})$ in the rectangle $\sigma \leq \Re(\rho) \leq 1$ and $|\Im(\rho)| \leq T$, counted with multiplicity. Then for any $N>1$ and $\varepsilon > 0$ we have
    \begin{equation*}
        \sumflat_{\cN(\a) \leq N} N(\sigma, T, \a) \ll_\varepsilon N^{1 - \delta_n(\sigma) + \varepsilon} \cdot T^{1 + \frac{d(1-\sigma)}{3-2\sigma} + \varepsilon},
    \end{equation*}
    where $d := [K:\Q]$ and
    \begin{equation*}
        \delta_n(\sigma) := \begin{cases}
            \frac{2\sigma-1}{(1-2\sigma)n+4} & \text{ for } \frac{1}{2}\leq \sigma \leq \frac{1}{2} + \frac{1}{n}, \\[1mm]
            \frac{(2\sigma-1)((2\sigma-1)n^2-(6\sigma-2)n+3)}{(2\sigma-1)(3-2\sigma)n^2-4\sigma n+2} & \text{ for } \frac{1}{2} + \frac{1}{n}\leq \sigma \leq 1 - \frac{1}{2n}, \\[1mm]
            \frac{8\sigma^2-10\sigma+3}{1-4(1-\sigma)^2n} & \text{ for } 1 - \frac{1}{2n} \leq \sigma \leq  1.
        \end{cases}  
    \end{equation*}
\end{corollary}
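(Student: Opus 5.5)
This follows the strategy of \cite{BGL14}*{Corollary 1.6} (itself modeled on Heath-Brown's quadratic argument), with the large sieve input \eqref{eq:BGL_bound} replaced by \cref{thm:main}.

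\textbf{Step 1: zero detection.} Take $X = N^{\alpha}T^{\beta}$ and $Y$ to be optimized. For each zero $\rho$ counted by $N(\sigma,T,\a)$ we use a mollified Dirichlet polynomial
\[
M_X(s,\chi_\a)=\sum_{\cN(\b)\le X}\mu(\b)\chi_\a(\b)\cN(\b)^{-s}.
\]
Standard zero-detection arguments then give the following: either a short polynomial $\sum_{X<\cN(\b)\le Y^{2}}c_\b\chi_\a(\b)\cN(\b)^{-\rho}$ is $\gg 1$, or an integral of $L(\tfrac12+it,\chi_\a)M_X$ over a short vertical segment is large. The conductor is $\ll \cN(\a)$ up to bounded factors from $S$. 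The finitely many $\a$ with $\cN(\a)$ small are harmless.

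\textbf{Step 2: reduce to $n$-th power free moduli and squarefree-type variables.} Since the sum is over $\sumflat$ moduli, \cref{thm:main} (stated for $\Theta_n$) applies directly in the $\a$ variable. For the $\b$-variable we split $\b=\b_1\b_2^n$ with $\b_1$ $n$-th power free. Then $\chi_\a(\b_2^n)$ is trivial or zero, so the $\b_2$-sum factors out at a cost of $N^\varepsilon$.

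\textbf{Step 3: bounds for the zero counts.} Zeros are $1$-spaced in height, so we count well-spaced points, and for each dyadic range $\cN(\b)\sim V$ we bound moments via Hölder. The first case gives
\[
\sumflat_{\cN(\a)\le N}N \ll (N+V+N^{1-1/n}V^{2/n}+N^{2/n}V^{1-1/n})V^{1-2\sigma}(NT)^{\varepsilon}.
\]
To use the full range we raise the polynomial to a power $k$, i.e.\ take $V=V_0^k$. In the second case we use a mean value estimate for $L(\tfrac12+it,\chi_\a)$: by the approximate functional equation, the $L$-value is a polynomial of length about $(\cN(\a)T^d)^{1/2}$. Applying \cref{thm:main} with $B=(NT^d)^{1/2}$ to its fourth or second power, combined with the large value (Halász–Montgomery type) inequality, gives a bound of shape $N^{1+\varepsilon}T^{1+\varepsilon}Y^{2(1-2\sigma)}$ plus terms. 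The $T$-exponent $1+\frac{d(1-\sigma)}{3-2\sigma}$ comes from choosing the moment and length parameters as in \cite{BGL14}, where the $T$-dependence is unchanged.

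\textbf{Step 4: optimization.} We balance $N$, $V$, and the two cross terms $N^{1-1/n}V^{2/n}$ and $N^{2/n}V^{1-1/n}$ against the saving $V^{1-2\sigma}$. Which term dominates changes at $V=N$ and at the crossover points determined by $\sigma=\tfrac12+\tfrac1n$ and $\sigma=1-\tfrac1{2n}$; this produces the three regimes of $\delta_n(\sigma)$.
\begin{itemize}
\item For $\sigma$ near $\tfrac12$ the term $N^{1-1/n}V^{2/n}$ is the bottleneck, and the choice $V=N^{\theta}$ with $\theta=2/((1-2\sigma)n+4)\cdot(\ldots)$ yields $\delta_n=\frac{2\sigma-1}{(1-2\sigma)n+4}$.
\item In the middle range, the two polynomial lengths from the zero detection must be chosen so that the two cross terms balance simultaneously. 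This gives the rational function stated.
\item Near $\sigma=1$, the $N^{2/n}V^{1-1/n}$ term against the $L$-value mean value gives the last formula.
\end{itemize}

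\textbf{Main obstacle.} The difficulty is purely computational: the piecewise optimization in Step 4, specifically choosing $k$ and the mollifier length so that the exponent is exactly $\delta_n(\sigma)$ in each range and the ranges glue continuously. I would first verify continuity at $\sigma=\tfrac12+\tfrac1n$ and $\sigma=1-\tfrac1{2n}$ as a check on the chosen parameters.
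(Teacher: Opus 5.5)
Your overall route is the paper's: rerun the argument of \cite{BGL14}*{Corollary 1.6} over $n$-th power free $\a$ and insert \cref{thm:main} wherever the large sieve is used. But the proposal stops where the proof's actual content begins. Nothing in it establishes $\delta_n(\sigma)$ or the $T$-exponent. In Step 4 the parameter $\theta$ is left as ``$(\ldots)$'', the middle-range formula is only asserted, and the $T$-exponent is attributed to \cite{BGL14} without any mechanism.

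More importantly, Step 3 misidentifies the integral term. There the new large sieve enters through the mollifier $M_X$, while the $L$-values only contribute a factor $(NT^{d/2})^{1/2}$. The inequality to be optimized is, for $1\le X\le Y$,
\begin{align*}
\sumflat_{\cN(\a)\le N} N(\sigma,T,\a) \ll_\varepsilon (NTY)^\varepsilon\, T\Big( & Y^{\frac{1}{2}-\sigma}\big(NT^{d/2}\big)^{\frac{1}{2}}\big(X+N+X^{\frac{2}{n}}N^{1-\frac{1}{n}}+X^{1-\frac{1}{n}}N^{\frac{2}{n}}\big)^{\frac{1}{2}} \\
& \quad +Y^{2-2\sigma}+NX^{1-2\sigma}+\sup_{X\le U\le Y}U^{1-2\sigma}\big(N^{\frac{2}{n}}U^{1-\frac{1}{n}}+N^{1-\frac{1}{n}}U^{\frac{2}{n}}\big)\Big).
\end{align*}
Your shape ``$NTY^{2(1-2\sigma)}$ plus terms'' does not depend on $X$. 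Yet in the middle and last ranges the binding constraints are exactly the cross terms $X^{2/n}N^{1-1/n}$ and $X^{1-1/n}N^{2/n}$ of $\Theta_n(N,X)$ inside the first term. So Step 3 as written is not the inequality that yields the stated $\delta_n$. The $k$-th power trick and Hal\'asz--Montgomery play no role: the lower cutoff $U\ge X$ already gives $NU^{1-2\sigma}\le NX^{1-2\sigma}$.

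What remains is an explicit optimization, which you must actually carry out. Put $X=N^x$, $Y=N^yT^z$ and $q=2\sigma-1$.

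For the $T$-exponent: the first two terms carry $T^{(\frac{1}{2}-\sigma)z+\frac{d}{4}}$ and $T^{(2-2\sigma)z}$, and the remaining terms carry at most the latter. Choosing $z=\frac{d}{6-4\sigma}$ makes both exponents equal to $\frac{d(1-\sigma)}{3-2\sigma}$; this is where the $T$-exponent comes from.

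For the $N$-exponent there are three ranges.
\begin{enumerate}
\item If $nq\le 2$, take $x=\frac{1}{2}$ (so $\Theta_n(N,X)\ll N^{1+\varepsilon}$) and $y=\frac{2}{4-nq}$. This balances $1-\frac{qy}{2}$ against $1-\frac{1}{n}+(\frac{2}{n}-q)y$.
\item If $2\le nq\le n-1$, set $D=nq(2(n-1)-nq)-2(n-1)$ and take $x=\frac{nq(n-2)-(n-1)}{D}$, $y=\frac{nq(2n-5)-2(n-2)}{D}$. This equalizes $1-\frac{1}{2n}+\frac{x}{n}-\frac{qy}{2}$, $\frac{2}{n}+(1-\frac{1}{n}-q)y$ and $1-\frac{1}{n}+(\frac{2}{n}-q)x$.
\item If $nq\ge n-1$, set $\Delta=1-n(1-q)^2$ and take $x=\frac{nq-n+2}{\Delta}$, $y=\frac{(n+2)q-n+1}{\Delta}$. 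This equalizes $\frac{1}{2}+\frac{1}{n}+\frac{(n-1)x}{2n}-\frac{qy}{2}$, $(1-q)y$ and $\frac{2}{n}+(1-\frac{1}{n}-q)x$.
\end{enumerate}
Checking that the remaining terms are smaller gives $N^{1-\delta_n(\sigma)}$ in each range.

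A minor point in Step 2: $\chi_\a(\b_2)^n=\bbone_{(\b_2,\a)=1}$ depends on $\a$, so the $\b_2$-sum does not literally factor out. This is routine to fix, for example by M\"obius inversion on $(\b_2,\a)$ while keeping $\b_2$ inside the coefficients.
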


\begin{figure}[ht]
  \centering
  \includegraphics[width=.62\linewidth]
    {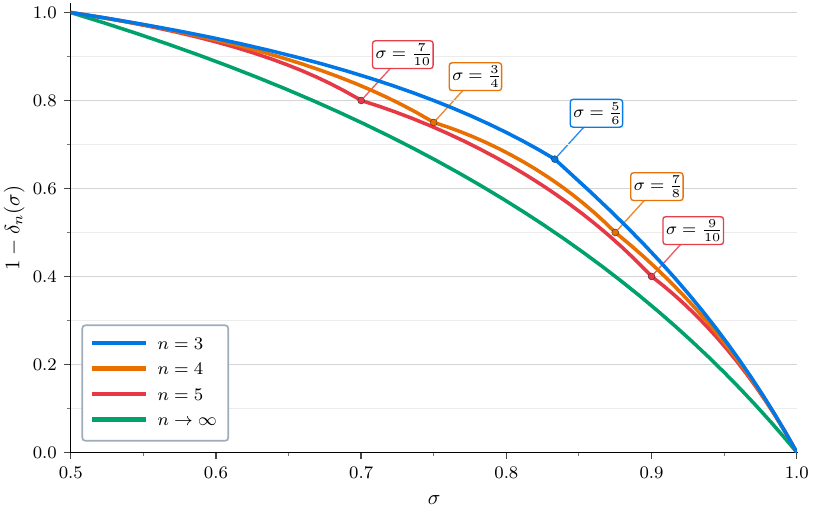}
  \caption{Plot of the zero density exponent $1-\delta_n(\sigma)$ in \cref{thm:zero_density} for $n=3, 4, 5$, and in the limit $n\to\infty$. Generated by ChatGPT 5.6 Sol.}
  \label{fig:zero-density-exponent}
\end{figure}

The bound of Blomer, Goldmakher, and Louvel \cite{BGL14} corresponds to the blue line in \cref{fig:zero-density-exponent} for every $n\geq 3$. Note also that \cref{thm:zero_density} applies to the larger $n$-th power free family.

We expect \cref{thm:main} to have several other applications, for instance to moments in other families and to non-vanishing for the corresponding $L$-functions. 

\subsection{Discussion of the main ideas}

\subsubsection{The large sieve inequality}

The main new idea in the proof of \cref{thm:main} is to bound the larger kernel $\Theta_n$, where both sums are over $n$-th power free ideals, instead of the usual kernel $\Xi_n$, where the sums are over squarefree ideals. 

To describe what is gained, let us give a sketch of the proof. The main maneuver is to complete the outer sum over $\a \sim A$ in \eqref{eq:Theta_n_def} to all integral ideals by positivity, open up the square, and apply Poisson summation in $\a$. One hopes to obtain roughly\footnote{In this sketch we repeatedly use the reciprocity relation $\chi_\a(\b) \approx \chi_\b(\a)$, ignoring local factors.}
\begin{equation}\label{eq:sketch_Poisson}
    \sumflat_{\a\sim A} \Big|\sumflat_{\b\sim B} \lambda_\b \chi_\b(\a) \Big|^2 \leq \sum_{\a\sim A} \Big|\sumflat_{\b\sim B} \lambda_\b \chi_\b(\a) \Big|^2 \approx \frac{A}{B} \sum_{\a\sim B^2/A} \Big|\sumflat_{\b\sim B} \lambda'_\b \chi_\b(\a) \Big|^2 
\end{equation}
after Poisson, since the resulting characters $\chi_{\b_1} \overline{\chi_{\b_2}}(\a)$ typically have conductor $\approx B^2$. If the sum over $\a$ can be reduced to a sum over $n$-th power free ideals, then this process could be iterated\footnote{One also needs to inflate one of the ranges when this process is self-dual (for instance if $A=B$), but this inflation is now standard, so we focus on \eqref{eq:sketch_Poisson}.}. 

Writing $\a = \d \e^{n}$ for $n$-th power free $\d$, and localizing $\e \sim E$, we crucially have $E \ll (B^2/A)^{1/n}$. Trivially bounding the sum over $\e$, the contribution of this range to \eqref{eq:sketch_Poisson} is
\begin{equation}\label{eq:sketch_nth_powers}
    \ll \frac{A}{B}\sum_{\e \sim E} \sumflat_{\d \sim B^2/AE^n} \Big|\sumflat_{\substack{\b\sim B \\ (\b, \e)=1}} \lambda'_{\b} \chi_\b(\d) \Big|^2  \ll \frac{EA}{B} \cdot  \Theta_n\Big( \frac{B^2}{AE^n}, B\Big) \cdot \normt{\lambda}^2.
\end{equation}
Note that $\Theta_n\big( \frac{B^2}{AE^n}, B\big) \gg B$, so the best bound we can expect from this argument is
\begin{equation*}
    \Theta_n(A, B) \ll \frac{EA}{B}\cdot B = EA \ll \Big(\frac{B^2}{A}\Big)^{1/n} A = A^{1-1/n}B^{2/n}.
\end{equation*}
This explains the shape of the bound of \cref{thm:main}. 

If instead we worked with the squarefree kernel $\Xi_n$, then decomposing $\a = \a_1 \a_2^2 \cdots \a_n^{n}$ for $\a_1, \a_2, \dots, \a_{n-1}$ squarefree and localizing $\a_i \sim A_i$, then the only clear move is to trivially bound all except the longest of the sums over the $\a_i$ with $1\leq i \leq n-1$. Even in the case $A_1 = A_2 = (B^2/A)^{1/3}$ and $A_3 = \cdots = A_n = 1$, the procedure above only shows that $\Xi_n(A, B)$ is at most
\begin{equation*}
    \frac{A}{B} \cdot \Big(\frac{B^2}{A}\Big)^{1/3} \cdot \Xi_n\Big(\Big(\frac{B^2}{A}\Big)^{1/3}, B\Big) \gg \frac{A}{B} \cdot \Big(\frac{B^2}{A}\Big)^{1/3} \cdot B = (AB)^{2/3}. 
\end{equation*}
This explains the previous bounds obtained in \cites{HB00, BGL14}.

\subsubsection{Coupling of root numbers}

Returning to our argument, one important obstacle arises in \eqref{eq:sketch_Poisson}. There we intended to apply Poisson summation. Denoting $C = \cN(\cond(\chi_{\b_1}\overline{\chi_{\b_2}}))$, this takes the form
\begin{equation}\label{eq:sketch_corrected_Poisson}
    \sum_{\a \sim A} \chi_{\b_1} \overline{\chi_{\b_2}}(\a) \approx \epsilon(\chi_{\b_1}\overline{\chi_{\b_2}}) \cdot \frac{A}{C^{1/2}} \sum_{\a \sim C/A} \overline{\chi_{\b_1}}\chi_{\b_2}(\a),
\end{equation}
where $\epsilon(\chi_{\b_1}\overline{\chi_{\b_2}})$ is the root number. For simplicity, assume we are in the generic case $(\b_1, \b_2)=1$. If $\b_1$ and $\b_2$ are squarefree, then $C \asymp \cN(\b_1\b_2) \asymp B^2$ and $\epsilon(\chi_{\b_1}\overline{\chi_{\b_2}}) \approx \epsilon(\chi_{\b_1}) \cdot \overline{\epsilon(\chi_{\b_2})}$. This splitting of the root number allows one to recombine the sums over $\b_1$ and $\b_2$ into a square, and is a feature of all previous works. However, if $\b_1$ and $\b_2$ are not squarefree, as in our case, then the root number does not split and \eqref{eq:sketch_Poisson} is too simplistic. 

Writing $\b_1^\star$ for the product of prime ideals dividing $\b_1$ (i.e.\ its squarefree kernel) and $\b_1^\sharp := \b_1 / \b_1^\star$, and similarly for $\b_2$, then up to irrelevant local factors we have 
\begin{equation*}
    \epsilon(\chi_{\b_1}\overline{\chi_{\b_2}}) \approx \epsilon(\chi_{\b_1}) \cdot \overline{\epsilon(\chi_{\b_2})} \cdot \chi_{\b_2^\star}(\b_1^\sharp) \cdot \overline{\chi_{\b_1^\star}(\b_2^\sharp)}.
\end{equation*}
Localizing $\b^\sharp_i \sim B_i$ and $\b_i^\star \sim B/B_i$, generically we have $\cN(\cond(\chi_{\b_1} \overline{\chi_{\b_2}})) \asymp \cN(\b_1^\star \b_2^\star) \asymp B^2/(B_1B_2)$. By \eqref{eq:sketch_corrected_Poisson}, the right side of \eqref{eq:sketch_Poisson} for these ranges should be
\begin{equation}\label{eq:sketch_pre_Cauchy}
    \approx \frac{A(B_1B_2)^{1/2}}{B} \sumflat_{\b_1^\sharp \sim B_1} \sumflat_{\b_2^\sharp \sim B_2} \sum_{\a \sim \frac{B^2}{AB_1B_2}} \cS_1 \cdot \overline{\cS_2}
\end{equation}
for 
\begin{equation}
    \cS_{1} \approx \sumstar_{\b_1^\star \sim B/B_1} \lambda_{\b_1^\sharp \b_1^\star} \epsilon(\chi_{\b_1^\sharp \b_1^\star}) \overline{\chi_{\b_1^\star}(\b_2^\sharp) \chi_{\b_1^\sharp \b_1^\star}(\a)}
\end{equation}
and analogously for $\cS_2$. We dropped some conditions (such as $\b_1^\sharp \b_1^\star$ being $n$-th power free) to simplify the exposition.

Now apply Cauchy--Schwarz to the sums over $\b_1^\sharp, \b_2^\sharp, \a$ to recover the desired shape as in \eqref{eq:sketch_Poisson}. At this point it is not enough to fix $\b_1^\sharp$ and $\b_2^\sharp$ and ignore the shape of the couplings $\b_1^\star \leftrightarrow \b_2^\sharp$ and $\b_2^\star \leftrightarrow \b_1^\sharp$, since if $B_1 = B_2 = B/A^{1/2}$ we would only get the insufficient bound
\begin{equation*}
    A^{1/2} \frac{B}{A^{1/2}} \cdot  \Theta_n(1, A^{1/2}) \Big( \sumflat_{\b_1^\sharp \sim \frac{B}{A^{1/2}}} \sumstar_{\b_1^\star \sim A^{1/2}} |\lambda_{\b_1^\sharp \b_1^\star}|^2 \Big)^{1/2} \Big( \sumflat_{\b_2^\sharp \sim \frac{B}{A^{1/2}}} \sumstar_{\b_2^\star \sim A^{1/2}} |\lambda_{\b_2^\sharp \b_2^\star}|^2 \Big)^{1/2} \gg A^{1/2}B \cdot \normt{\lambda}^2.
\end{equation*} 

\subsubsection{Using the coupling structure}

Note that for $\lambda'_\b := \lambda_\b \epsilon(\chi_\b)$ we have
\begin{equation*}
    \sumflat_{\b_1^\sharp \sim B_1} \sumflat_{\b_2^\sharp \sim B_2} \sum_{\a \sim \frac{B^2}{AB_1B_2}} |\cS_1|^2 \approx \sumflat_{\b_1^\sharp \sim B_1} \sumflat_{\b_2^\sharp \sim B_2} \sum_{\a \sim \frac{B^2}{AB_1B_2}} \Big|\sumstar_{\b_1^\star \sim B/B_1} \lambda'_{\b_1^\sharp \b_1^\star}\overline{\chi_{\b_1^\star}(\b_2^\sharp \a)}\Big|^2.
\end{equation*}
Combining $\b_2^\sharp \a$ into a single variable and applying a divisor bound, the display above is
\begin{equation}\label{eq:sketch_inside_Cauchy}
    \ll_\varepsilon (AB)^\varepsilon \sumflat_{\b_1^\sharp \sim B_1} \sum_{\d \sim \frac{B^2}{AB_1}} \Big|\sumstar_{\b_1^\star \sim B/B_1} \lambda'_{\b_1^\sharp \b_1^\star}\overline{\chi_{\b_1^\star}(\d)}\Big|^2.
\end{equation}

This is similar to \eqref{eq:sketch_Poisson}, except both sums are shorter by a factor of $B_1$. The iterated bound available for this kernel will be smaller than the one for \eqref{eq:sketch_Poisson} by at least a factor of $B_1$ (since terms in such a bound have sum of exponents $\geq 1$ by AM-GM). The extra sum over $\b_1^\sharp$ in \eqref{eq:sketch_inside_Cauchy} is irrelevant, as it only serves to gather $\normt{\lambda}^2$. Hence recalling that \eqref{eq:sketch_inside_Cauchy} is inside a square-root, we at least cancel the factor of $B_1^{1/2}$ in \eqref{eq:sketch_pre_Cauchy}. Thus the worst case is $B_1 = B_2 = 1$, in which case \eqref{eq:sketch_pre_Cauchy} becomes exactly the ideal bound in \eqref{eq:sketch_Poisson}.

\subsubsection{Cubic moments in conductor aspect}

We briefly comment on the proof of \cref{thm:cubic_second_moment}, which is quite simple and uses structural properties of $n = 3$. By the approximate functional equation, for $\psi = 1$ the theorem essentially reduces to a bound of the form
\begin{equation*}
    \sum_{\substack{\a \sim A}} \sum_{\substack{\b \sim B}} \Big| \sumstar_{\substack{\q \sim (AB)^{1/2}}}  \chi_{\q}(\a \b^2) \Big|^2 \ll_\varepsilon (AB)^{3/2+\varepsilon}.
\end{equation*}
The squarefree condition on $\q$ can be introduced for free due to the decay of the $L$-function coefficients $\cN(\q)^{-1/2}$, and we may assume $A \leq B$ by conjugating the inner sum.

Inspired by the proof of \cref{thm:main}, we open the square and apply Poisson summation to $\chi_{\q_1}\overline{\chi_{\q_2}}(\a\b^2)$ in the longer sum over $\b$. Since the conductor is typically of size $\cN(\q_1\q_2) \asymp AB$, the dual length will be $AB/B = A$. This does not improve the lengths when $A = B$, but it dramatically improves the shape of the sum. After Poisson the character is conjugated, hence we obtain 
\begin{equation*}
    \approx \frac{B}{(AB)^{1/2}} \sum_{\a\sim A} \sum_{\b \sim A} \Big| \sumstar_{\substack{\q \sim (AB)^{1/2}}}  \chi_{\q}(\a \b) \Big|^2.
\end{equation*}
At this point we can combine $\a\b$ into a single variable, which has norm $A^2$ (compared to $AB^2$ in the initial sum). Removing cubes as in \eqref{eq:sketch_nth_powers} and using \cref{thm:main} we get
\begin{equation*}
    \ll \sup_{1 \ll E \ll A^{2/3}} BE \cdot \Theta_3\Big(\frac{A^2}{E^3}, (AB)^{1/2}\Big) \ll_\varepsilon (AB)^\varepsilon B \big(A^2 + A^{2/3}(AB)^{1/2} + (A^2(AB)^{1/2})^{2/3} \big),
\end{equation*}
which is $\ll_\varepsilon (AB)^{3/2+\varepsilon}$ since $A\leq B$. For details, see the more general \cref{prop:cubic_conductor_bound}.

\subsection*{Acknowledgments}

The author thanks Alexander Dunn for many inspiring conversations about $n$-th order characters throughout the years, and for helpful comments on this work.

\subsection*{AI use}

All of the ideas and writing in this paper are due to the author. We used ChatGPT to create the plot in \cref{fig:zero-density-exponent}, to write Sage code for the two examples in \cref{sec:examples}, to compute the optimal exponents in the last step of the proofs of \cref{thm:general_second_moment} and \cref{thm:zero_density}, and to proofread. 


\section{Background}\label{sec:background}

\subsection{Number fields}

Let $n \geq 2$ be an integer and $K$ be a number field containing the $n$-th roots of unity, with ring of integers $\cO$ and absolute norm $\cN(\cdot)$. For each place $v$ of $K$, let $K_v$ denote the completion of $K$ at $v$. For $v$ finite, let $\p_v$ denote the corresponding prime ideal of $\cO$. For any integral ideal $\a \subseteq \cO$, let
\begin{equation*}
    \a^\star := \prod_{n \nmid \ord_v(\a)} \p_v
\end{equation*}
denote the squarefree kernel of the $n$-th power free part of $\a$.

\subsection{The \texorpdfstring{$n$}{}-th power residue symbol}

The $n$-th power residue symbol in $K$ is defined, for each prime ideal $\p \subset \cO$ with $(\p, n) = 1$ and $\alpha \in \cO$ with $(\alpha, \p) = 1$, as the unique $n$-th root of unity satisfying
\begin{equation}\label{eq:residue_symbol_prime}
    \Big(\frac{\alpha}{\p}\Big)_n \equiv \alpha^{\frac{\cN(\p)-1}{n}} \pmod{\p}.
\end{equation}
The symbol can be extended multiplicatively to any fractional ideal $\mathfrak{b}$ of $\cO$ with $(\mathfrak{b}, n\alpha) = 1$ by
\begin{equation}\label{eq:residue_symbol_def}
     \Big(\frac{\alpha}{\mathfrak{b}}\Big)_n  := \prod_{\substack{\p \text{ prime}}} \Big(\frac{\alpha}{\p}\Big)^{\nu_\p(\mathfrak{b})}_n.
\end{equation}
We also multiplicatively extend the symbol to $\alpha \in K^\times$ if $(\alpha, \b) = 1$, and set $\big(\frac{\alpha}{\b}\big)_n := 0$ if $(\alpha, \b) \neq 1$. See \cite{CF67}*{Exercise 1} for more details on the $n$-th power residue symbol.

\begin{remark}
    Note that \eqref{eq:residue_symbol_prime} and \eqref{eq:residue_symbol_def} define $\big(\frac{\alpha}{\b}\big)_n$ as an element of $K$, and to obtain a complex number we fix a field embedding $K \hookrightarrow \C$ once and for all, which will be implicit in the notation. A different choice of embedding leads to $\big(\frac{\alpha}{\b}\big)_n \mapsto \big(\frac{\alpha}{\b}\big)^r_n$ for some $r \in (\Z/n\Z)^\times$.
\end{remark}

\subsection{Extension to \texorpdfstring{$n$}{}-th order Hecke characters}\label{sec:n_th_order_construction}

The $n$-th power residue symbols can be extended to a family of Hecke characters of order (dividing) $n$. This extension was constructed by Fisher and Friedberg \cite{FF04} for $n=2$, and by Friedberg, Hoffstein, and Lieman \cite{FHL03} for general $n$. A detailed description is given in \cite{GL13}*{Section 2}, which we mostly follow. For the purposes of this paper we may assume that $n \geq 3$, which we do from now on to simplify some of the exposition below.

\subsubsection{The set $S$}

We fix once and for all a finite set $S$ of finite places of $K$, containing all divisors of $n$ and such that denoting $\widetilde{S} := S \sqcup S_\infty$, where $S_\infty$ is the set of all infinite places of $K$, then the ring $\cO_{\widetilde{S}}$ of $\widetilde{S}$-integers has class number one. Let $I(S)$ and $I^*(S)$ denote respectively the sets of integral and fractional ideals of $\cO$ coprime with $S$.

\subsubsection{The group $R_\c$}

Fix also an integral ideal $\c := \prod_{v \in S} \p_v^{e_v}$, for integers $e_v \geq 1$ chosen sufficiently large so that $\ord_{\p_v}(a-1) \geq e_v$ for $a \in K_v$ implies $a \in (K_v^\times)^n$. In particular, if $\p_v \nmid n$ then we may choose $e_v = 1$. Let $H_\c$ be the ray class group modulo $\c$, and let\footnote{The reader may prefer to think of $R_\c \simeq H_\c / H_\c^n$.} $R_\c := H_\c \otimes \Z/n\Z$. For $\a \in I^*(S)$, let $[\a] \in R_\c$ denote its class. By the choice of $\c$, note that Hecke characters of order dividing $n$ unramified outside $S$ factor through $R_\c$. Then class field theory and Kummer theory give $\Hom(R_\c, \mu_{n}) \simeq \cO_{\widetilde{S}}^\times / (\cO_{\widetilde{S}}^\times)^n$. The $S$-unit theorem and duality imply
\begin{equation}\label{eq:R_C_product}
    R_\c \simeq (\Z/n\Z)^{\frac{[K:\Q]}{2} + |S|}.
\end{equation}

\subsubsection{The set $\cE$}

Choose a generator for each $(\Z/n\Z)$-component of $R_\c$ in \eqref{eq:R_C_product}, and let $\cE_0$ be a complete set of ideals of $\cO$ coprime to $S$ representing these generators. For each $E_0 \in \cE_0$, choose $m_{E_0} \in K^\times$ such that $E_0 \cO_{\widetilde{S}} = m_{E_0} \cO_{\widetilde{S}}$. 

Let $\cE$ be a complete set of representatives of $R_{\c}$ of the form $\prod_{E_0 \in \cE_0} E_0^{n_{E_0}}$, for integers $0 \leq n_{E_0}  < n$. For each element $E = \prod_{E_0 \in \cE_0} E_0^{n_{E_0}} \in \cE$, let $m_E := \prod_{E_0 \in \cE_0} m_{E_0}^{n_{E_0}} \in K^\times$. Note in particular that $E \cO_{\widetilde{S}} = m_{E} \cO_{\widetilde{S}}$ for every $E \in \cE$. Assume without loss of generality that $\cO \in \cE$ and $m_\cO = 1$. 

\subsubsection{Definition of $\chi_\a$}

For coprime $\a, \b \in I^*(S)$, note that we can decompose $\a = (x) E \g^n$ for some $x \in K^\times$ with $x \equiv 1 \pmod{\c}$, $E \in \cE$, and $\g \in I^*(S)$ with $(\g, \b) = 1$. Then we let $\chi_\a(\b) := \big(\frac{xm_E}{\b}\big)_n$, which is well-defined since any other such decomposition $\a = (x') E' \g'^n$ implies $E' = E$ and $\big(\frac{xm_E}{\b}\big)_n = \big(\frac{x'm_{E}}{\b}\big)_n$ as in \cite{FHL03}*{Proposition 1.1}. In the complementary case of $\b \subseteq \cO$ with $(\b, \a \c) \neq 1$, we define $\chi_\a(\b) = 0$. Observe that the construction of the character $\chi_\a$ is not canonical, as it depends on the choices made in this section. These choices are suppressed from the notation.

\subsection{Examples}\label{sec:examples} We briefly pause to compute $\Xi_n(A, B)$ for certain choices of data, showing how to obtain the familiar classical versions of the cubic and quartic large sieves. The computations for this subsection were performed in SageMath, using code generated by ChatGPT 5.6 Sol.

\begin{example}[Cubic large sieve]\label{ex:cubic}
    Let $n=3$ and $K = \Q(\zeta_3) = \Q(\sqrt{-3})$, which has class number $1$. We may take $S = \{(\sqrt{-3})\}$ and $\c = (9) = (\sqrt{-3})^4$. Then $H_\c \simeq (\Z/3\Z)^2 \simeq R_\c$, and we may choose $\cE = \bigcup_{i, j=0}^2\{E_{ij}\}$ for $E_{ij} = (m_{E_{ij}})$ with $m_{E_{ij}} = (-2)^i (1+3\zeta_3)^j$. Note that $m_{E_{ij}} \equiv 1 \pmod{3}$. With this data, writing each $\a, \b \in I(S)$ as $\a = (a)$ and $\b = (b)$ for principal generators $a, b \equiv 1 \pmod{3}$, and choosing $\g = (1)$ in the decomposition $\a = (x) E \g^n$ for $x \equiv 1 \pmod{9}$ and $E \in \cE$, we obtain $\chi_\a(\b) := \big(\frac{x m_E}{\b}\big)_3 = \big(\frac{a}{b}\big)_3$. Hence 
    \begin{equation*}
        \Xi_3(A, B) = \Xi^{\text{HB}}_3(A, B) := \sup_{\normt{\lambda} = 1} \sum_{\substack{a \in \Z[\zeta_3] \\ a \equiv 1 \pmod{3} \\ \cN(a) \leq A}} \mu^2(a) \Big| \sum_{\substack{b \in \Z[\zeta_3] \\ b \equiv 1 \pmod{3} \\ \cN(b) \leq B}} \mu^2(b) \lambda_b \Big(\frac{a}{b}\Big)_3 \Big|^2.
    \end{equation*}
    This is precisely the operator norm considered by Heath-Brown in his version of the cubic large sieve. Thus \cref{thm:main} allows us to obtain the same bound as \cite{HB00}*{Theorem 2}, but relaxing the restriction that $a, b \in \Z[\zeta_3]$ be squarefree to cube-free.
\end{example}

\begin{example}[Quartic large sieve]\label{ex:quartic}
    Let $n = 4$ and $K = \Q(\zeta_4) = \Q(i)$, which has class number $1$. We may take $S = \{(1+i)\}$ and $\c = (1+i)^7$. Then $H_\c \simeq (\Z/4\Z)^2 \simeq R_\c$, and we may choose $\cE = \bigcup_{j, k=0}^3\{E_{jk}\}$ for $E_{jk} = (m_{E_{jk}})$ with $m_{E_{jk}} = (-1+2i)^j (-1-2i)^k$. Note that $m_{E_{jk}} \equiv 1 \pmod{(1+i)^3}$. With this data, writing each $\a, \b \in I(S)$ as $\a = (a)$ and $\b = (b)$ for principal generators $a, b \equiv 1 \pmod{(1+i)^3}$, and choosing $\g = (1)$ in the decomposition $\a = (x) E \g^n$ for $x \equiv 1 \pmod{(1+i)^7}$ and $E \in \cE$, we obtain $\chi_\a(\b) := \big(\frac{x m_E}{\b}\big)_4 = \big(\frac{a}{b}\big)_4$. Thus 
    \begin{equation*}
        \Xi_4(A, B) = \Xi_4^{\text{DDH}}(A, B) := \sup_{\normt{\lambda} = 1} \sum_{\substack{a \in \Z[i] \\ a \equiv 1 \pmod{(1+i)^3} \\ \cN(a) \leq A}} \mu^2(a) \Big| \sum_{\substack{b \in \Z[i] \\ b \equiv 1 \pmod{(1+i)^3} \\ \cN(b) \leq B}} \mu^2(b) \lambda_b \Big(\frac{a}{b}\Big)_4 \Big|^2.
    \end{equation*} 
    Here we recover the classical version of the quartic large sieve, which is important in applications. Hence the bound of \cref{thm:main} improves \cite{CDD26}*{Theorem 4.14}, and asymptotically matches the lower bound \cite{DDH26}*{Theorem 1.2} in logarithmic scale.
\end{example}

\subsection{Properties of \texorpdfstring{$n$}{}-th order Hecke characters}

\subsubsection{Basic properties of $\chi_\a$} Let $\a\in I(S)$. By the discussion in \cite{GL13}*{p.~196--197}, it follows that $\chi_\a$ is a Hecke character modulo $\c\a$ of order dividing $n$, with trivial infinity type. Its conductor is $\c_\a \a^\star$ for some integral ideal $\c_\a \mid \c$, where recall that $\a^\star = \prod_{n \nmid \ord_v(\a)} \p_v$. Furthermore, for any $\a_1, \a_2 \in I^*(S)$ we have
\begin{equation}\label{eq:chi_a_multiplicativity}
    \chi_{\a_1 \a_2}(\b) = \chi_{\a_1}(\b) \cdot \chi_{\a_2}(\b) \qquad \text{for $\b \in I^*(S)$ with $(\b, \a_1) = (\b, \a_2) = 1$.}
\end{equation}
Under the stronger assumption $\a_1, \a_2 \in I(S)$, the relation \eqref{eq:chi_a_multiplicativity} holds for every $\b \subseteq \cO$, since if $(\b, \a_1\a_2\c)\neq 1$ then both sides are $0$ by definition. Denote the primitive Hecke character inducing $\chi_\a$ by $\tchi_\a$. Note that it still satisfies the relation \eqref{eq:chi_a_multiplicativity}.

\subsubsection{Reciprocity} 

By \cite{FHL03}*{Proposition 1.2}, for any coprime $\a, \b \in I^*(S)$ we have that the quantity
\begin{equation}\label{eq:reciprocity}
    \chi_\a(\b) \chi_\b(\a)^{-1} =: \alpha([\a], [\b])
\end{equation} 
depends only on the classes $[\a], [\b] \in R_\c$. This follows from the reciprocity law, see for instance \cite{CF67}*{Exercise 2.10}.

\subsubsection{Conductor of product}

We will make extensive use of the following important property of products of $n$-th order Hecke characters, which is proved as in \cite{GL13}*{p.~197}.

\begin{lemma}[Conductors]\label{lem:conductors}
    If $\a, \b \in I(S)$ are coprime and $[\a] = [\b]$ in $R_\c$, then $\tchi_\a \cdot \overline{\tchi_\b}$ is a Hecke character with trivial infinity type and conductor $\a^\star\b^\star$.
\end{lemma}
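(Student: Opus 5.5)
The plan is to treat the product prime by prime. The conductor of a Hecke character is the product of its local conductors, so it suffices to determine the local conductor of $\tchi_\a\overline{\tchi_\b}$ at every finite place $v$. The infinity type is the easy part: both $\tchi_\a$ and $\tchi_\b$ have trivial infinity type, as recorded in the basic properties above, hence so does their product. Since $\tchi_\a$ has conductor $\c_\a\a^\star$ and $\tchi_\b$ has conductor $\c_\b\b^\star$, the product is unramified outside $S$ and the primes dividing $\a^\star\b^\star$. We then split into three types of places.

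\emph{Places $v$ dividing $\a^\star$.} Here $\p_v\nmid\b$, since $\a$ and $\b$ are coprime. Hence $\tchi_\b$ is unramified at $v$, because its conductor is supported on $\b^\star$ and $S$. So the local component of the product at $v$ equals that of $\tchi_\a$ up to an unramified twist. The conductor exponent of $\tchi_\a$ at $v$ is exactly $1$, since the conductor is $\c_\a\a^\star$ with $\a^\star$ squarefree. Thus the product has exponent exactly $1$ at $v$. The places dividing $\b^\star$ are handled symmetrically. Primes dividing $\a$ or $\b$ but not $\a^\star\b^\star$ (those with $n\mid\ord_v$) are unramified for both characters and contribute nothing.

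\emph{Places $v\in S$.} This is the main step, and the hypothesis $[\a]=[\b]$ enters here. I would write $\a=(x)E\g^n$ and $\b=(y)E\h^n$ with the same $E\in\cE$ and $x\equiv y\equiv 1 \pmod{\c}$, so that
\begin{equation*}
    \chi_\a\overline{\chi_\b}(\q)=\Big(\frac{x m_E}{\q}\Big)_n\Big(\frac{y m_E}{\q}\Big)_n^{-1}=\Big(\frac{x/y}{\q}\Big)_n
\end{equation*}
for $\q$ coprime to everything relevant. This is the character attached to the Kummer extension $K(\sqrt[n]{x/y})/K$. Since $x/y\equiv 1 \pmod{\c}$, the choice of the exponents $e_v$ gives $x/y\in(K_v^\times)^n$ for every $v\in S$. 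So this Kummer extension splits completely at $v$, and the character is unramified at $v$.

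To make this rigorous I would use Artin reciprocity in the following form: the local component at $v$ of the Hecke character $\q\mapsto(\tfrac{\alpha}{\q})_n$ is given by the local norm residue (Hilbert) symbol $(\alpha,\cdot)_v$ up to sign conventions. This local symbol is trivial when $\alpha\in(K_v^\times)^n$. The point needing care is that $\tchi_\a\overline{\tchi_\b}$ agrees with this Kummer character at all primes outside a finite set. Two primitive Hecke characters that agree at almost all primes are equal, so their conductors coincide. This is where I expect the bookkeeping, following \cite{GL13}, to be the main obstacle; the underlying idea is just the $n$-th power condition at $S$.

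\emph{Conclusion.} Combining the three cases, the conductor is $\prod_{v\mid\a^\star\b^\star}\p_v=\a^\star\b^\star$. Nontriviality is not required by the statement: if $\a^\star\b^\star=\cO$, the character is unramified everywhere, and the claim still holds.
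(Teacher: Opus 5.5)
Your argument is correct. It matches the paper at the places dividing $\a^\star\b^\star$: the paper also begins by noting that the conductor has the form $\c'\a^\star\b^\star$ with $\c'\mid\c$. At the places in $S$, however, you take a different, local route.

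The paper's argument is global and uses only the reciprocity relation \eqref{eq:reciprocity}. It shows that $\tchi_\a\overline{\tchi_\b}((x))=1$ for every $x\equiv 1\pmod{\a\b}$ coprime to $\c$. To do so it writes this value as $\big(\frac{x_\a/x_\b}{x}\big)_n$ and flips it by reciprocity, which is allowed because $[x_\a/x_\b]$ is trivial in $R_\c$. This gives $\big(\frac{x}{\a}\big)_n\big(\frac{x}{\b}\big)_n^{-1}=1$, so the conductor divides $\a\b$ and hence $\c'=\cO$.

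You instead identify $\chi_\a\overline{\chi_\b}$, away from finitely many primes, with the Kummer character of $x/y$. Then $x/y\equiv 1\pmod{\c}$ makes $x/y$ a local $n$-th power at every $v\in S$, which is exactly the property the exponents $e_v$ were chosen to guarantee. So $K_v(\sqrt[n]{x/y})=K_v$, and nothing ramifies at $S$.

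What your route buys:
\begin{itemize}
\item It makes the role of $\c$ transparent.
\item It could also give the exponents at $v\mid\a\b$ directly, without quoting the conductor of $\tchi_\a$. This uses $\ord_v(x/y)\equiv\ord_v(\a)-\ord_v(\b)\pmod n$ (the ideal $E$ cancels) together with tameness at $v\nmid n$.
\end{itemize}

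What it costs: you must import the identification of $\q\mapsto(\frac{\alpha}{\q})_n$ with a Hecke character whose local components are Hilbert symbols, i.e.\ Artin reciprocity for Kummer extensions. You also need the fact that Hecke characters agreeing at almost all primes have the same primitive version. The paper, by contrast, needs nothing beyond the reciprocity relation it has already taken from \cite{FHL03}.
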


\begin{proof}
    Since $\tchi_\a$ and $\overline{\tchi_\b}$ are primitive of conductors respectively $\c_\a \a^\star$ and $\c_\b \b^\star$, and $(\a^\star, \b^\star) = 1$, we conclude that $\tchi_\a \cdot \overline{\tchi_\b}$ has conductor $\f := \c' \a^\star \b^\star$ for some $\c' \mid \c$. It then suffices to show that $\tchi_\a \cdot \overline{\tchi_\b}((x)) = 1$ for all $x \in K^\times$ with $x \equiv 1 \pmod{\a\b}$ and $(x, \c)=1$, since this implies $\f \mid \a \b$, hence $\c' = \cO$. To prove this last assertion, decompose $\a = (x_\a) E_\a \g_\a^n$ and $\b = (x_\b) E_\b \g_\b^n$ for $x_\a, x_\b \in K^\times$ with $x_\a, x_\b \equiv 1 \pmod{\c}$, $E_\a, E_\b \in \cE$, and $\g_\a, \g_\b \in I^*(S)$ with $(\g_\a, x) = (\g_\b, x)=1$. Then $E_\a = E_\b$ since $[\a] =[\b]$ in $R_\c$, therefore
    \begin{equation*}
        \tchi_\a \cdot \overline{\tchi_\b}((x)) = \Big(\frac{x_\a / x_\b}{x}\Big)_n \Big(\frac{m_{E_\a}/m_{E_\b}}{x}\Big)_n = \Big(\frac{x_\a / x_\b}{x}\Big)_n.
    \end{equation*}
    From $x_\a, x_\b \equiv 1 \pmod{\c}$ we conclude that $[x_\a] = [x_\b] = [\cO]$, hence by reciprocity we have the desired relation
    \begin{equation*}
        \Big(\frac{x_\a / x_\b}{x}\Big)_n = \Big(\frac{x}{x_\a / x_\b}\Big)_n = \Big(\frac{x}{\a}\Big)_n\Big(\frac{x}{\b}\Big)_n^{-1} = 1.
    \end{equation*}
\end{proof}

\section{The functional equation and root numbers}

\subsection{Poisson summation formula}

Let $W : (0, \infty) \to [0, \infty)$ be a fixed smooth function with compact support satisfying $W(x) \geq \bbone_{[1, 2]}(x)$. Denote its Mellin transform by $\widehat{W}$ and for $c>0$ let\footnote{Here we correct a typo in \cite{BGL14}*{p.~1933}, which was identified by ChatGPT 5.6 Sol.}
\begin{equation}\label{eq:W_dot_def}
    \dot{W}(x) := \frac{1}{2\pi i} \int_{(c)} \widehat{W}(1-s) \Big(\frac{\Gamma(s)}{\Gamma(1-s)}\Big)^{d/2} \Big(\frac{|D_K|}{(2\pi)^d}\Big)^{s-1/2} x^{-s} \, ds,
\end{equation}
where $d$ and $D_K$ are respectively the degree and discriminant of $K$. Shifting the contour leads to
\begin{equation}\label{eq:W_dot_bound}
    \dot{W}(x) \ll_{W, K, \ell} |x|^{-\ell}
\end{equation}
for any $\ell>0$ and $x > 0$. Using the functional equation for Hecke $L$-functions, as in  \cite{BGL14}*{Lemma 2.2} and \cite{GL13}*{Lemma 3.2}, leads to the following\footnote{This uses that $K$ is totally complex, as it contains $\Q(\zeta_n)$ for $n\geq 3$.}.

\begin{lemma}[Poisson summation formula]\label{lem:general_Poisson_formula}
    Let $\chi$ be a non-trivial primitive ray class character of $K$ with conductor $\f$. For any $A>0$ and $W$ as above, we have
    \begin{equation*}
        \sum_{\a \subseteq \cO} W\Big(\frac{\cN(\a)}{A}\Big) \chi(\a) = \frac{\epsilon(\chi) A}{\sqrt{\cN(\f)}} \sum_{\a \subseteq \cO} \dot{W}\Big(\frac{\cN(\a)}{\cN(\f) / A}\Big) \overline{\chi(\a)},
    \end{equation*}
    where the root number $\epsilon(\chi)$ has absolute value $1$.
\end{lemma}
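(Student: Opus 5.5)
The approach is to deduce the statement from the functional equation of the Hecke $L$-function $L(s,\chi)$ via Mellin inversion, as in \cite{BGL14}*{Lemma 2.2} and \cite{GL13}*{Lemma 3.2}.

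First write
\begin{equation*}
    \sum_{\a} W\Big(\frac{\cN(\a)}{A}\Big)\chi(\a) = \frac{1}{2\pi i}\int_{(2)} \widehat{W}(s) A^s L(s,\chi)\, ds .
\end{equation*}
Since $\chi$ is nontrivial and primitive, $L(s,\chi)$ is entire. Because $n\geq 3$, the field $K$ is totally complex, so it has $d/2$ complex places and no real places. Since $\chi$ has trivial infinity type, the completed $L$-function is
\begin{equation*}
    \Lambda(s,\chi) = \big(|D_K|\cN(\f)\big)^{s/2}\big((2\pi)^{-s}\Gamma(s)\big)^{d/2} L(s,\chi),
\end{equation*}
and it satisfies $\Lambda(s,\chi)=\epsilon(\chi)\Lambda(1-s,\overline{\chi})$ with $|\epsilon(\chi)|=1$.

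Next, shift the contour to $\Re s=-c'$ for some $c'>0$. This is legitimate because $\widehat W$ decays rapidly in vertical strips, while $L$ has at most polynomial growth there by Phragmén--Lindelöf. No poles are crossed.

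On the new line, apply the functional equation:
\begin{equation*}
    L(s,\chi) = \epsilon(\chi)\big(|D_K|\cN(\f)\big)^{1/2-s}(2\pi)^{d(2s-1)/2}\Big(\frac{\Gamma(1-s)}{\Gamma(s)}\Big)^{d/2} L(1-s,\overline{\chi}).
\end{equation*}
Then expand $L(1-s,\overline\chi)=\sum_\a \overline{\chi(\a)}\cN(\a)^{s-1}$, which converges absolutely since $\Re(1-s)>1$, and change variables $s\mapsto 1-s$.

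It remains to collect the factors, and this bookkeeping is the main point of care. The term for $\a$ becomes
\begin{equation*}
    \frac{\epsilon(\chi)A}{\sqrt{\cN(\f)}}\,\overline{\chi(\a)}\cdot\frac{1}{2\pi i}\int_{(c)}\widehat W(1-s)\Big(\frac{\Gamma(s)}{\Gamma(1-s)}\Big)^{d/2}\Big(\frac{|D_K|}{(2\pi)^d}\Big)^{s-1/2}\Big(\frac{\cN(\a)A}{\cN(\f)}\Big)^{-s}ds .
\end{equation*}
Here the power of $A$ is $A^{1-s}=A\cdot A^{-s}$, and the power of $\cN(\f)$ is $\cN(\f)^{s-1/2}$. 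This is exactly $\dot W\big(\cN(\a)/(\cN(\f)/A)\big)$ by \eqref{eq:W_dot_def}.

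Interchanging the sum and the integral is justified by absolute convergence. The only genuinely delicate points are:
\begin{itemize}
    \item verifying that the gamma factors are exactly $\Gamma_\C(s)^{d/2}$, which uses that $K$ is totally complex and that $\chi$ has trivial infinity type;
    \item checking that the normalization matches \eqref{eq:W_dot_def}, which is the typo noted in the footnote.
\end{itemize}
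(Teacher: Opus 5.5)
Your proposal is correct and follows the same route the paper intends. The paper gives no separate proof: it invokes the functional equation of Hecke $L$-functions as in \cite{BGL14}*{Lemma 2.2} and \cite{GL13}*{Lemma 3.2}, using that $K$ is totally complex, and you carry out exactly that Mellin-inversion, contour-shift and functional-equation argument. Your collection of the factors $A^{1-s}\cN(\f)^{s-1/2}\big(|D_K|/(2\pi)^d\big)^{s-1/2}$ reproduces the normalization of $\dot{W}$ in \eqref{eq:W_dot_def} exactly.
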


\subsection{Root number splitting}

We need some information about root numbers. This is given in \cref{lem:root_number} below, whose proof is similar to \cite{BGL14}*{Lemma 2.3}. Before stating it, let us establish some notation. 

Any $\a \in I(S)$ which is $n$-th power free may be uniquely decomposed as\footnote{Note that the squarefree kernel $\a^\star$ coincides with its previous definition.}
\begin{equation}\label{eq:ideal_decomposition}
    \a = \a^\star \cdot \a^\sharp
\end{equation}
for squarefree $\a^\star \in I(S)$ and $\a^\sharp \in I(S)$ such that $\a^\sharp \mid (\a^\star)^\infty$ and $\a^\star \cdot \a^\sharp$ is $n$-th power free. 

Since $(\a, \c) = 1$, the primitive characters $\tchi_{\a}$, which we saw have conductor $\c_{\a} \a^\star$ for some $\c_{\a} \mid \c$, can be decomposed by CRT as\footnote{Note that $\chi^{(\a)}$ and $\psi_\a$ need not be ray class characters (but do have opposite unitary infinity types). Fixing this infinity type, the decomposition \eqref{eq:coprime_decomp} is unique up to multiplication by a class group character.}
\begin{equation}\label{eq:coprime_decomp}
    \tchi_{\a} = \chi^{(\a)} \cdot \psi_{\a}
\end{equation}
for primitive unitary Hecke characters $\chi^{(\a)}$ and $\psi_{\a}$ of conductors respectively $\a^\star$ and $\c_{\a}$. 

\begin{definition}\label{def:primitive_product}
    For any $\a, \b \in I(S)$, denote the primitive ray class character that induces $\tchi_{\a} \cdot \overline{\tchi_{\b}}$ by $\tchi_{\a, \b}$.
\end{definition}

If $\a, \b\in I(S)$ are coprime and $[\a] = [\b]$, then \cref{lem:conductors} implies that $\tchi_{\a, \b}$ has conductor $\a^\star \b^\star$, which is coprime with $\c$. Hence $\psi_{\a} \cdot \overline{\psi_{\b}}$ has conductor $\cO$. Thus $\c_{\a} = \c_{\b}$, and furthermore we may modify the decompositions \eqref{eq:coprime_decomp}, multiplying by a unitary Hecke character of conductor $\cO$ so that $\psi_{\a} = \psi_{\b}$ whenever $[\a] = [\b]$, by fixing a global representative for each class of $R_\c$. With this choice of representatives, \textbf{which we fix once and for all}, $\psi_{\a} \cdot \overline{\psi_{\b}}$ is trivial on $I(S)$, so 
\begin{equation}\label{eq:_primitive_chi_decomposition}
    \tchi_{\a, \b} = \chi^{(\a)} \cdot \overline{\chi^{(\b)}} \qquad \text{ for } \a, \b \in I(S) \text{ with } (\a, \b)=1 \text{ and } [\a] = [\b].
\end{equation}

\begin{lemma}[Root number of product]\label{lem:root_number}
    There exists a fixed sequence $\{\beta_{\b}\}_{\b \in I(S)}$ satisfying $|\beta_{\b}| = 1$ such that the following holds. For any $n$-th power free, coprime $\b_1, \b_2 \in I(S)$ with $[\b_1] = [\b_2] =: \cC \in R_\c$, the primitive ray class character $\tchi_{\b_1, \b_2}$ has root number
    \begin{equation*}
        \epsilon(\tchi_{\b_1, \b_2}) = \eta \cdot  \beta_{\b_1} \cdot \overline{\beta_{\b_2}} \cdot \chi_{\b_1^\sharp}(\b_2^\star) \cdot \overline{\chi_{\b_2^\sharp}(\b_1^\star)}
    \end{equation*}
    for some $\eta \in \C$ satisfying $|\eta|=1$ and depending only on $[\b_1^\star], [\b_2^\star], \cC \in R_\c$.
\end{lemma}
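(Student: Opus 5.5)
We will use the standard factorization of Gauss sums and root numbers across coprime conductors. By \eqref{eq:_primitive_chi_decomposition}, $\tchi_{\b_1,\b_2} = \chi^{(\b_1)}\cdot\overline{\chi^{(\b_2)}}$ has conductor $\b_1^\star\b_2^\star$. The two factors are primitive Hecke characters (not necessarily ray class characters, but with opposite unitary infinity types) of coprime conductors $\b_1^\star$ and $\b_2^\star$. The root number of a product of primitive characters with coprime conductors factors as
\begin{equation*}
\epsilon(\chi^{(\b_1)})\,\epsilon(\overline{\chi^{(\b_2)}})\,\chi^{(\b_1)}(\b_2^\star)\,\overline{\chi^{(\b_2)}}(\b_1^\star),
\end{equation*}
up to a unimodular factor coming from the infinity types and the choice of different/idele normalizations. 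Concretely, one writes the root number as a product of local epsilon factors, $\epsilon(\chi)=\prod_v \epsilon_v(\chi_v)$. At a place $v\mid\b_1^\star$, the local factor of the product is $\epsilon_v(\chi^{(\b_1)}_v)$ twisted by the unramified character $\overline{\chi^{(\b_2)}_v}$, and that twist contributes $\overline{\chi^{(\b_2)}_v}$ evaluated at the uniformizer power given by the conductor exponent. Infinite places and places dividing $\c$ only contribute terms depending on the infinity types, which are fixed by the fixed global representatives chosen for each class.

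First, set $\beta_\b := \epsilon(\chi^{(\b)})$, suitably normalized. Since the infinity type of $\chi^{(\b)}$ depends only on $[\b]$ by our fixed choice of $\psi$'s, the discrepancy between $\epsilon(\overline{\chi^{(\b_2)}})$ and $\overline{\epsilon(\chi^{(\b_2)})}$ is a unimodular constant depending only on $[\b_2]=\cC$; we absorb it into $\eta$.

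Second, we convert the cross terms $\chi^{(\b_1)}(\b_2^\star)$ into values of $\chi_{\b_1}$. Since $\b_2^\star$ is coprime to $\c$, we have $\chi^{(\b_1)}(\b_2^\star)=\tchi_{\b_1}(\b_2^\star)\overline{\psi_{\b_1}(\b_2^\star)}$. Here $\psi_{\b_1}$ has conductor dividing $\c$, so its value on $\b_2^\star$ depends only on the class $[\b_2^\star]$ (modulo the infinity-type subtlety, handled by the fixed global representatives). This gives a factor depending only on $[\b_2^\star]$ and $\cC$. Next, $\tchi_{\b_1}(\b_2^\star)=\chi_{\b_1}(\b_2^\star)$ by coprimality, and by \eqref{eq:chi_a_multiplicativity}, $\chi_{\b_1}(\b_2^\star)=\chi_{\b_1^\star}(\b_2^\star)\chi_{\b_1^\sharp}(\b_2^\star)$. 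The remaining term $\chi_{\b_1^\star}(\b_2^\star)\overline{\chi_{\b_2^\star}(\b_1^\star)}$ is, by reciprocity \eqref{eq:reciprocity}, equal to $\alpha([\b_1^\star],[\b_2^\star])\,|\chi_{\b_2^\star}(\b_1^\star)|^2=\alpha([\b_1^\star],[\b_2^\star])$, which depends only on the classes. The same treatment of $\overline{\chi^{(\b_2)}}(\b_1^\star)$ produces $\overline{\chi_{\b_2^\sharp}(\b_1^\star)}$, together with $\overline{\chi_{\b_2^\star}(\b_1^\star)}$ and class-dependent factors. Combining the two sides, the $\chi_{\b_1^\star}(\b_2^\star)$ and $\overline{\chi_{\b_2^\star}(\b_1^\star)}$ terms pair up via reciprocity, and we are left with the stated shape.

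The main obstacle is the first step: making the coprime-conductor factorization of root numbers rigorous for non-ray-class Hecke characters with nontrivial unitary infinity type, with the correct normalization of Gauss sums. We would handle this idelically via Tate's local epsilon factors, in the manner of \cite{BGL14}*{Lemma 2.3}. The key checks are that the unramified local twist at $v\mid\b_1^\star$ yields exactly $\overline{\chi^{(\b_2)}}$ evaluated at the ideal $\p_v^{\ord_v(\b_1^\star)}$, and that all archimedean and $\c$-adic contributions depend only on $[\b_1^\star],[\b_2^\star],\cC$.
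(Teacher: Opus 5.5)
There is a genuine gap in your second step. You claim that $\overline{\psi_{\b_1}(\b_2^\star)}$ depends only on $[\b_2^\star]$ and $\cC$, and you absorb it into $\eta$. This is false in general.

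As you note yourself, $\psi_{\b_1}$ is not a ray class character: it has a (generally nontrivial) unitary infinity type, opposite to that of $\chi^{(\b_1)}$. So for $x \equiv 1 \pmod{\c}$ one has $\psi_{\b_1}((x)\b) = \psi_{\b_1,\infty}(x)^{-1}\psi_{\b_1}(\b)$, which varies with $x$. Even a genuine ray class character modulo $\c$ is constant on classes of $R_\c = H_\c \otimes \Z/n\Z$ only if its order divides $n$.

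The fixed global representatives do not repair this. All they give is that $\psi_\a$ depends only on $[\a]$, i.e.\ $\psi_{\b_1} = \psi_{\b_2} =: \psi_\cC$; they do not make $\psi_\cC$ a character of $R_\c$. Consequently, with your choice $\beta_\b = \epsilon(\chi^{(\b)})$, the leftover factor $\psi_\cC(\b_1^\star)\,\overline{\psi_\cC(\b_2^\star)}$ is not a function of $[\b_1^\star], [\b_2^\star], \cC$. It cannot go into $\eta$, and the identity does not follow.

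The missing idea is to use $\psi_{\b_1} = \psi_{\b_2}$ to move each $\psi$-factor onto its own variable:
$\overline{\psi_{\b_1}(\b_2^\star)}\,\psi_{\b_2}(\b_1^\star) = \psi_{\b_1}(\b_1^\star)\,\overline{\psi_{\b_2}(\b_2^\star)}$.
Equivalently, multiply by $\psi_{\b_1}\overline{\psi_{\b_2}}(\b_1^\star\b_2^\star) = 1$. The result splits as a function of $\b_1$ times a function of $\b_2$. One then takes $\beta_\b := \epsilon(\chi^{(\b)})\,\psi_\b(\b^\star)$ and $\eta := (-1)^{u_\cC}\,\alpha([\b_1^\star],[\b_2^\star])$, which is what the paper does. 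The rest of your second step goes through as written, in particular the reciprocity pairing of $\chi_{\b_1^\star}(\b_2^\star)$ with $\overline{\chi_{\b_2^\star}(\b_1^\star)}$.

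By contrast, the step you flag as the main obstacle is standard. The paper simply quotes the Gauss sum factorization from Iwaniec--Kowalski, and your local epsilon factor route also works, since the different contributions cancel. Moreover $\epsilon(\overline{\xi}) = \overline{\epsilon(\xi)}$ holds exactly, because $\epsilon(\xi)\epsilon(\overline{\xi}) = 1$ and $|\epsilon(\xi)| = 1$. So there is no discrepancy to absorb there; the only archimedean factor is $i^{u_{\b_1}+u_{\b_2}} = (-1)^{u_\cC}$.
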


\begin{proof}
    Since $[\b_1] = [\b_2]$, the previous discussion culminating in \eqref{eq:_primitive_chi_decomposition} gives
    \begin{equation*}
        \tchi_{\b_1, \b_2} = \chi^{(\b_1)} \cdot \overline{\chi^{(\b_2)}}.
    \end{equation*}
    
    Since $\epsilon(\overline{\xi}) = \overline{\epsilon(\xi)}$, by \cite{Iwa97}*{Chapter 12, (12.23) and (12.26)} and \eqref{eq:coprime_decomp} we conclude that\footnote{The following typos should be corrected in \cite{Iwa97}*{Theorem 12.3}, according to \cite{Lan94}*{XIV \S8}. The Gamma factors $\Gamma\big(\frac{|u_\sigma| + n_\sigma(s + iv_\sigma)}{2}\big)$ should be $\Gamma\big(\frac{|u_\sigma| + n_\sigma s - iv_\sigma}{2}\big)$. We then have $\Lambda(s, \xi) = \epsilon(\xi) \Lambda(1-s, \overline{\xi})$ for $\epsilon(\xi) = 2^{-iv_\C} i^{-u} W(\xi) N(\frakm)^{-1/2}$, where $u = \sum_\sigma |u_\sigma|$, $v_\C = \sum_{\sigma \text{ complex}} v_\sigma$, and $W$ is the same as in \cite{Iwa97}*{(12.21)}. In our case, since $K$ is totally complex, the normalization $\sum_\sigma v_\sigma = 0$ gives $v_\C = 0$. Note the absolute values in the definition of $u$, which were missing and are relevant here. These typos were found by ChatGPT 6 Astra.} 
    \begin{align*}
        \epsilon(\tchi_{\b_1, \b_2}) & = \epsilon(\chi^{(\b_1)} \cdot \overline{\chi^{(\b_2)}}) = i^{u_{\b_1} + u_{\b_2}}\cdot\epsilon(\chi^{(\b_1)}) \cdot \overline{\epsilon(\chi^{(\b_2)})} \cdot \chi^{(\b_1)}(\b_2^\star) \cdot \overline{\chi^{(\b_2)}(\b_1^\star)} \\
        & = i^{u_{\b_1} + u_{\b_2}}\cdot \epsilon(\chi^{(\b_1)}) \cdot \overline{\epsilon(\chi^{(\b_2)})} \cdot \tchi_{\b_1}(\b_2^\star) \cdot \overline{\tchi_{\b_2}(\b_1^\star)} \cdot \overline{\psi_{\b_1}(\b_2^\star)} \cdot \psi_{\b_2}(\b_1^\star).
    \end{align*}
    Here $u_{\b_i}$ depends only on the infinity type of $\chi^{(\b_i)}$, which is opposite that of $\psi_{\b_i}$. Those are fixed and depend only on $\cC$, so denote $u_{\b_1} = u_{\b_2} =: u_\cC$. Multiplying by $\psi_{\b_1} \cdot \overline{\psi_{\b_2}} (\b_1^\star\b_2^\star) = 1$,
     \begin{align*}
        \epsilon(\tchi_{\b_1, \b_2}) & = (-1)^{u_\cC}\cdot \epsilon(\chi^{(\b_1)}) \cdot \overline{\epsilon(\chi^{(\b_2)})} \cdot \chi_{\b_1}(\b_2^\star) \cdot \overline{\chi_{\b_2}(\b_1^\star)} \cdot \psi_{\b_1}(\b_1^\star) \cdot \overline{\psi_{\b_2}(\b_2^\star)}.
    \end{align*}
    By \eqref{eq:chi_a_multiplicativity} and \eqref{eq:reciprocity} we have
    \begin{align*}
        \chi_{\b_1}(\b_2^\star) \cdot \overline{\chi_{\b_2}(\b_1^\star)} & = \chi_{\b_1^\star}(\b_2^\star) \cdot \overline{\chi_{\b_2^\star}(\b_1^\star)} \cdot \chi_{\b_1^\sharp}(\b_2^\star) \cdot \overline{\chi_{\b_2^\sharp}(\b_1^\star)}  = \alpha([\b_1^\star], [\b_2^\star]) \cdot \chi_{\b_1^\sharp}(\b_2^\star) \cdot \overline{\chi_{\b_2^\sharp}(\b_1^\star)}.
    \end{align*}
    The result follows after setting $\beta_{\b} := \epsilon(\chi^{(\b)}) \cdot \psi_\b(\b^\star)$ and $\eta := (-1)^{u_\cC}\cdot \alpha([\b_1^\star], [\b_2^\star])$.
\end{proof}

\begin{remark}
    Note that \cref{lem:root_number} is notably different from its analogue for squarefree $\b_1, \b_2$ in \cite{BGL14}*{Lemma 2.3}, where the root number splits completely as a product of functions of each of $\b_1$ and $\b_2$. We must contend with the cross terms $\chi_{\b_1^\sharp}(\b_2^\star) \cdot \overline{\chi_{\b_2^\sharp}(\b_1^\star)}$, which will cause technical difficulties. Fortunately these difficulties are dampened by conductor dropping when $\b_1^\sharp \b_2^\sharp$ is large.
\end{remark}


\section{Preliminary lemmas}

This section gathers some standard results which will be useful throughout the paper. We fix $n\geq 3$, $K \supseteq \Q(\zeta_n)$, the function $W$, and the data of \cref{sec:n_th_order_construction} for constructing $n$-th order characters from now on, so implied constants may depend on them. 

For $\a, \b \in I(S)$, denote the condition $A \leq \cN(\a) < 2A$ by $\a \sim A$, and let $\tchi_{\a, \b}$ be the primitive character inducing $\tchi_\a \cdot \overline{\tchi_{\b}}$, as in \cref{def:primitive_product}. Define
\begin{equation*}
    \Sigma_1(A, B, \lambda) := \sumflat_{\a \sim A} \Big| \sumflat_{\b \sim B} \lambda_\b \chi_\a(\b) \Big|^2 \qquad \text{and} \qquad  \Sigma_2(A, B, \lambda) := \sum_{\substack{\a \in I(S) \\ \a \sim A}} \Big| \sumflat_{\b \sim B} \lambda_\b \chi_\b(\a) \Big|^2.
\end{equation*}
For $\Vec{\cC} = (\cC_1, \cC_2, \cC) \in (R_\c)^{\oplus 3}$, set
\begin{equation*}
    \Sigma_3(A, B, \lambda, \Vec{\cC}) := \Bigg| \sumflat_{\substack{\b_1, \b_2 \sim B \\ (\b_1, \b_2)=1 \\ [\b_1^\star] = \cC_1,\, [\b_2^\star] = \cC_2 \\ [\b_1] = [\b_2] = \cC}} \lambda_{\b_1} \overline{\lambda_{\b_2}} \sum_{\a \subseteq \cO} W\Big(\frac{\cN(\a)}{A}\Big) \tchi_{\b_1, \b_2}(\a) \Bigg|.
\end{equation*}
Let
\begin{equation*}
    \sL_i(A, B) := \sup_{\normt{\lambda}=1} \Sigma_{i}(A, B, \lambda) \qquad\text{ for } i \in \{1, 2\}
\end{equation*}
and
\begin{equation*}
    \sL_3(A, B) := \sup_{\normt{\lambda}=1} \sup_{\Vec{\cC} \in (R_\c)^{\oplus 3}} \Sigma_{3}(A, B, \lambda, \Vec{\cC}).
\end{equation*}

\begin{lemma}[Symmetry]\label{lem:symmetry}
    For any $A, B \geq 1$, we have $\sL_1(A,B) \asymp \sL_1(B,A)$.
\end{lemma}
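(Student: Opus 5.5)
The plan is to read $\sL_1(A,B)$ as the squared operator norm of a finite matrix, and then combine duality (a matrix and its transpose have the same norm) with the reciprocity law \eqref{eq:reciprocity}. Let $\mathcal A$ be the set of $n$-th power free $\a \in I(S)$ with $\a \sim A$, and $\mathcal B$ the analogous set for $B$. Let $M$ be the $\mathcal A \times \mathcal B$ matrix with entries $M_{\a,\b} := \chi_\a(\b)$. Then $\sL_1(A,B) = \|M\|^2$. Likewise $\sL_1(B,A) = \|N\|^2$, where $N$ is the $\mathcal B \times \mathcal A$ matrix with $N_{\b,\a} := \chi_\b(\a)$. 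By duality $\|N\| = \|N^T\|$, so it suffices to show $\|M\| \ll \|N^T\|$ and $\|N^T\| \ll \|M\|$, where $N^T_{\a,\b} = \chi_\b(\a)$. By the symmetry of the setup, one direction is enough.

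The first step is to compare the entries of $M$ and $N^T$. Suppose $\a,\b \in I(S)$ with $(\a,\b) \neq 1$. Then $\chi_\a(\b) = 0$ and $\chi_\b(\a) = 0$ by the definition of $\chi_\a$ in \cref{sec:n_th_order_construction}. If instead $(\a,\b) = 1$, then \eqref{eq:reciprocity} gives
\begin{equation*}
    \chi_\a(\b) = \alpha([\a],[\b]) \, \chi_\b(\a).
\end{equation*}
Here $\alpha$ depends only on the classes in $R_\c$, and $R_\c$ is a fixed finite group by \eqref{eq:R_C_product}. In every case, therefore, $M_{\a,\b} = \alpha([\a],[\b]) \, N^T_{\a,\b}$. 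This identity also holds in the non-coprime case, with any value of $\alpha$, since both entries vanish. The values $\alpha(\cdot,\cdot)$ are $n$-th roots of unity on coprime pairs, so $|\alpha| = 1$.

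The second step decomposes $M$ into blocks. For classes $\cC, \mathcal{D} \in R_\c$, let $P_\cC$ be the diagonal projection onto the coordinates $\a \in \mathcal A$ with $[\a] = \cC$, and $Q_{\mathcal D}$ the projection onto $\b \in \mathcal B$ with $[\b] = \mathcal D$. Then
\begin{equation*}
    M = \sum_{\cC, \mathcal D \in R_\c} \alpha(\cC, \mathcal D) \, P_\cC N^T Q_{\mathcal D}.
\end{equation*}
Projections have norm at most $1$, so the triangle inequality gives $\|M\| \leq |R_\c|^2 \, \|N^T\|$. Squaring, $\sL_1(A,B) \leq |R_\c|^4 \, \sL_1(B,A)$. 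The same argument with the roles of $M$ and $N^T$ exchanged, using $\overline{\alpha}$ in place of $\alpha$, gives the reverse inequality.

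The constant $|R_\c|^4$ depends only on the fixed data, which is permitted by the convention on implied constants. The only point that needs care is the non-coprime case, where \eqref{eq:reciprocity} does not apply directly. I expect no real obstacle there, because both characters vanish by definition on non-coprime pairs.
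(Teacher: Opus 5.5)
Your proof is correct and is essentially the paper's argument: reciprocity \eqref{eq:reciprocity} on coprime pairs (with both characters vanishing otherwise), splitting into the finitely many classes of $R_\c$, and duality. The paper splits only the outer variable into classes and absorbs the unimodular twist $\alpha([\a],\cC)$ into the coefficients, which gives the constant $|R_\c|$ instead of your $|R_\c|^4$; since constants may depend on the fixed data, this makes no difference.
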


\begin{proof}
    The proof is virtually identical to \cite{GL13}*{Lemma 4.5}. Indeed, denoting $\lambda^\cC_\a := \lambda_\a \cdot \alpha([\a], \cC)$, the reciprocity relation  \eqref{eq:reciprocity} implies
    \begin{align}
        \sumflat_{\b \sim B} \Big|\sumflat_{\a \sim A} \lambda_\a \chi_\a(\b) \Big|^2 & = \sum_{\cC \in R_\c} \sumflat_{\substack{\b \sim B \\ [\b] = \cC}} \Big|\sumflat_{\a \sim A} \lambda^\cC_\a \chi_\b(\a) \Big|^2 \leq \sum_{\cC \in R_\c} \sumflat_{\substack{\b \sim B}} \Big|\sumflat_{\a \sim A} \lambda^\cC_\a \chi_\b(\a) \Big|^2 \label{eq:example_reciprocity} \\
        & \leq \sum_{\cC \in R_\c} \sL_1(B, A) \cdot \sumflat_{\a \sim A} |\lambda^\cC_\a|^2 = |R_\c| \cdot \sL_1(B, A) \cdot \sumflat_{\a \sim A} |\lambda_\a|^2. \nonumber
    \end{align}
    The result then follows from the duality principle for the large sieve.
\end{proof}

\begin{lemma}[Monotonicity]\label{lem:monotonicity}
    There exists a constant $C \geq 1$ such that for any $A_1, A_2, B \geq 1$ with $A_2 \geq C A_1 \log(2 A_1 B)$ we have
    \begin{equation*}
        \sL_1(A_1, B) \ll \sL_1(A_2, B).
    \end{equation*}
\end{lemma}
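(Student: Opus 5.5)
Since $\sL_1(A_1,B)$ runs over $\a \sim A_1$, the plan is to map each such $\a$ injectively (up to bounded multiplicity) to an ideal $\a\p$ of size $\sim A_2$. The map will not change the character values on the relevant $\b$, up to a factor depending only on the class of $\p$. This is the standard monotonicity trick of Heath-Brown and of \cite{GL13}*{Lemma 4.6}, adapted to $n$-th power free ideals.

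\textbf{Choosing the auxiliary primes.} Fix a class $\cC_0 \in R_\c$; the natural choice is the trivial class. Consider prime ideals $\p \in I(S)$ with $[\p] = \cC_0$, $\p \nmid \a$, and $\cN(\p) \sim P$, where $P := A_2/(2A_1)$ (so that $\cN(\a\p) \in [A_2/2, 2A_2)$, which is covered by $O(1)$ dyadic ranges). For such $\p$, the product $\a\p$ is still $n$-th power free and coprime to $S$. By \eqref{eq:chi_a_multiplicativity}, for $\b$ coprime to $\p$,
\begin{equation*}
    \chi_{\a\p}(\b) = \chi_\a(\b)\chi_\p(\b).
\end{equation*}
By reciprocity \eqref{eq:reciprocity}, $\chi_\p(\b) = \alpha(\cC_0,[\b])\chi_\b(\p)$. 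The factor $\alpha(\cC_0,[\b])$ depends only on $[\b]$, so after splitting $\b$ by its class in $R_\c$ (at a cost of $O(1)$ via Cauchy--Schwarz) it is absorbed into $\lambda_\b$. The factor $\chi_\b(\p)$ is not constant, however. Instead of forcing $\p$ to be an $n$-th power residue, I will use the following cleaner device: choose $\p$ with $\chi_\p$ trivial on all $\b$, i.e.\ $\p = (\pi)$ with $\pi \equiv 1 \pmod{\c}$, or better $\p$ a principal prime in the trivial class of the group through which $\chi_\p$ restricted to the $\b$'s factors. That is the wrong direction, since $\chi_\p(\b)$ genuinely depends on $\p$. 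The correct device is to absorb $\chi_\p(\b)$ into the coefficients: for each fixed $\p$, set $\lambda^{\p}_\b := \lambda_\b \chi_\p(\b)\mathbf 1_{(\b,\p)=1}$. Then $\|\lambda^\p\|_2 \le \|\lambda\|_2$ and
\begin{equation*}
    \Big|\sumflat_{\b} \lambda^\p_\b \chi_\a(\b)\Big| = \Big|\sumflat_\b \lambda^\p_\b \cdots\Big|.
\end{equation*}
This changes $\lambda$ with $\p$, which is bad. So the actual approach will go the other way, as in \cite{GL13}: fix $\lambda$, and bound the sum over $\a \sim A_1$ by an average over $\p$.

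\textbf{Executing the average.} Write
\begin{equation*}
    \Big|\sumflat_\b \lambda_\b\chi_\a(\b)\Big|^2 = \Big|\sumflat_\b \lambda_\b\overline{\chi_\p(\b)}\chi_{\a\p}(\b)\Big|^2
\end{equation*}
for $\b$ coprime to $\p$. Primes $\p$ dividing some $\b$ in the support number $O(\log B)$ per $\b$, and they are handled by restricting to $\p \nmid \b$. This is where the $\log(2A_1B)$ in the hypothesis enters: I will choose $P \gg A_1\log(2A_1B)$ large enough that the number $\pi(P)$ of admissible $\p$ exceeds the $O(\log A_1B)$ excluded primes. The key step is then duality. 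By \cref{lem:symmetry} it suffices to prove the dual statement $\sL_1(B,A_1) \ll \sL_1(B,A_2)$, i.e.\ monotonicity in the inner length. There it is easy: given coefficients $\lambda_\a$ on $\a \sim A_1$, define $\lambda'_{\a\p} := \lambda_\a \overline{\chi_{\p}(\b)}$. Since this again depends on $\b$, use reciprocity to write $\chi_{\a\p}(\b) = \chi_\a(\b)\chi_\p(\b)$, restrict to $\p$ in a class where $\chi_\p(\b)$ is given by $\alpha$ times $\chi_\b(\p)$, and average over $\p$: summing $|\sum_{\a}\lambda_\a\chi_{\a\p}(\b)|^2$ over $\p \sim P$ produces, via the large sieve of \cref{lem:monotonicity}'s setting, a main diagonal $\pi(P)\,|\sum_\a\lambda_\a\chi_\a(\b)|^2$ plus off-diagonal terms.

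\textbf{Main obstacle.} I expect the main obstacle to be making the $\chi_\p(\b)$ twist harmless. The plan I will try first is the standard one: take $\lambda'_{\a\p} := \lambda_\a$ and use $\sum_{\p\sim P}|\chi_\p(\b)|^2 \cdot(\ldots)$ together with the positivity $\sum_\b|\sum_\a\lambda_\a\chi_\a(\b)\chi_\p(\b)|^2 = \sum_\b|\sum_\a\lambda_\a\chi_\a(\b)|^2$, which holds because $|\chi_\p(\b)|=1$ factors out of the inner sum over $\a$. This resolves the difficulty. For each fixed $\p$, the sequence $\mu_{\a\p}=\lambda_\a$ supported on $\a\p\sim A_2$ satisfies $\sumflat_\b|\sum_{\a}\lambda_\a\chi_{\a\p}(\b)|^2 = \sumflat_{\b\nmid\p\text{-free}}|\sum_\a\lambda_\a\chi_\a(\b)|^2$ up to the $\b$ divisible by $\p$. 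Averaging over the $\gg\log(2A_1B)$ choices of $\p$ makes the excluded $\b$ (each excluded for $O(\log B)$ primes) contribute at most half. This yields $\sL_1(B,A_1)\le 2\sL_1(B,A_2)$, and \cref{lem:symmetry} then gives the claim.
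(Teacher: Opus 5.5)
There is a genuine gap, and it sits in the inner variable.

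\textbf{What works.} In your final transposed form $\sum_{\b}|\sum_{\a}\lambda_\a\chi_\a(\b)|^2$, whose norm is $\sL_1(A_1,B)$ by plain duality (so \cref{lem:symmetry} is not needed), the twist $\chi_\p(\b)$ does factor out of the inner sum. Averaging over $\p$ then correctly discards the outer $\b$ with $\p\mid\b$, by positivity. Your earlier objection ``this changes $\lambda$ with $\p$, which is bad'' was unfounded: for each fixed $\p$ the bound $\sL_1(A_2,B)\normt{\lambda}^2$ is uniform in the coefficients. Absorbing $\overline{\chi_\p(\b)}$ into $\lambda_\b$ is exactly what the paper does in the untransposed form.

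\textbf{The gap.} You set $\mu_{\a\p}=\lambda_\a$ for all $\a\sim A_1$ and treat it as supported on $n$-th power free ideals. This fails when $\ord_\p(\a)=n-1$, since then $\a\p$ is not $n$-th power free. Such $\a$ do occur: $P$ may be as small as $C\log(2A_1B)$, far below $A_1^{1/(n-1)}$. If you instead impose $\p\nmid\a$, as in your first paragraph, the inner sum for fixed $\p$ becomes $\sum_{\p\nmid\a}\lambda_\a\chi_\a(\b)$ rather than the full sum. Positivity only lets you drop terms of the \emph{outer} sum, so neither it nor your averaging over $\p$ controls this difference. A telling sign is that your argument only ever uses $P\gg\log(2B)$, never the $\log(2A_1)$ in the hypothesis.

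\textbf{The missing step.} This is the absorption argument, the third term of \eqref{eq:AM-GM} in the paper.
\begin{enumerate}
\item For each $\p$, split the inner sum into its $\p\nmid\a$ and $\p\mid\a$ parts, using $|x+y|^2\le 2|x|^2+2|y|^2$.
\item Bound the $\p\mid\a$ contribution by $\sL_1(A_1,B)\sum_{\p\mid\a}|\lambda_\a|^2$.
\item Average over the $\asymp P/\log P$ primes. Each $\a$ has $\ll\log(2A_1)/\log P$ prime factors in the range, so this part becomes $\ll \frac{\log(2A_1)}{P}\,\sL_1(A_1,B)\normt{\lambda}^2\ll C^{-1}\,\sL_1(A_1,B)\normt{\lambda}^2$.
\item Take the supremum over $\lambda$ and absorb this term into the left side, which is legitimate since $\sL_1(A_1,B)<\infty$.
\end{enumerate}
This is precisely where $\log(2A_1)$ in the hypothesis is needed.

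\textbf{A smaller point.} With $\cN(\p)\sim P=A_2/(2A_1)$, the products $\a\p$ fill $[A_2/2,2A_2)$. You would therefore obtain $\sL_1(A_2/2,B)+\sL_1(A_2,B)$ rather than $\sL_1(A_2,B)$. The paper avoids this by taking $P=A_2/A_1$ and splitting $\a$ into $[A_1,\tfrac32A_1)$ and $[\tfrac32A_1,2A_1)$. It then takes $\cN(\p)$ in $(P,\tfrac43P)$ and $(\tfrac23P,P)$ respectively, so that $\a\p\sim A_2$ exactly.
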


\begin{proof}
    The proof is very similar to \cite{HB95}*{Lemma 9}, but we give full details. Let $P := \frac{A_2}{A_1}$ and consider prime ideals $\p \in I(S)$ with $\frac{2}{3}P < \cN(\p) < \frac{4}{3}P$. We bound $\Sigma_1(A_1, B, \lambda)$ using
     \begin{equation}\label{eq:AM-GM}
         \sumflat_{\substack{\a}} \Big| \sumflat_{\substack{\b}} \Big|^2 \leq \sumflat_{\substack{\p \mid \a}} \Big| \sumflat_{\substack{\b}} \Big|^2 + 2 \sumflat_{\substack{\p \nmid \a}} \Big| \sumflat_{\substack{\p \nmid \b}} \Big|^2 + 2 \sumflat_{\substack{\p \nmid \a}} \Big| \sumflat_{\substack{\p \mid \b}} \Big|^2.
     \end{equation}
     Consider the cases where $\cN(\a)$ is in $[A_1, \frac{3}{2}A_1)$ or $[\frac{3}{2}A_1, 2 A_1)$, restricting to $\p$ with $\cN(\p)$ respectively in $(P, \frac{4}{3}P)$ and $(\frac{2}{3}P, P)$. 

     By \eqref{eq:chi_a_multiplicativity} we have
     \begin{equation*}
         \sumflat_{\substack{\alpha A_1 \leq \cN(\a) < \beta A_1 \\ \p \nmid \a}} \Big| \sumflat_{\substack{\b \sim B \\ \p \nmid \b}} \lambda_\b \chi_\a(\b) \Big|^2 \leq \sumflat_{\substack{\frakr \sim A_2}} \Big| \sumflat_{\substack{\b \sim B \\ \p \nmid \b}} \lambda_\b \overline{\chi_{\p}(\b)} \cdot  \chi_{\frakr}(\b) \Big|^2 \leq \sL_1(A_2, B) \cdot \sum_{\b\sim B} |\lambda_\b|^2
     \end{equation*}
     for $(\alpha, \beta) = (1, \frac{3}{2})$ or $(\frac{3}{2}, 2)$, since $\frakr := \p\a \in I(S)$ is $n$-th power free and has norm in $[A_2, 2A_2)$. Thus for each $(\alpha, \beta)$ and $\p \in I(S)$ in the appropriate range, \eqref{eq:AM-GM} yields
     \begin{align*}
         \sumflat_{\substack{\alpha A_1 \leq \cN(\a) < \beta A_1}} \Big| \sumflat_{\substack{\b \sim B}} \lambda_\b \chi_\a(\b) \Big|^2 \ll \sumflat_{\substack{\a \sim A_1 \\ \p \mid \a}} \Big| \sumflat_{\substack{\b \sim B}} \lambda_\b \chi_\a(\b) \Big|^2 &+ \sL_1(A_2, B) \cdot \sum_{\b\sim B} |\lambda_\b|^2 \\
         &+ \sL_1(A_1, B) \cdot \sum_{\substack{\b\sim B \\ \p \mid \b}} |\lambda_\b|^2.
     \end{align*} 

     We sum over $\p \in I(S)$ in the relevant range. Observe (by taking $C \geq 1$ sufficiently large) that there are $\asymp\frac{P}{\log{P}}$ such prime ideals $\p$, and the ideals $\a$ and $\b$ have $\ll \frac{\log(2A_1B)}{\log{P}}$ prime factors among those $\p$. Hence
     \begin{align*}
          & \frac{P}{\log{P}} \cdot \sumflat_{\substack{\alpha A_1 \leq \cN(\a) < \beta A_1}} \Big| \sumflat_{\substack{\b \sim B}} \lambda_\b \chi_\a(\b) \Big|^2 \\
          \ll \ & \frac{\log(2A_1B)}{\log{P}} \cdot \sumflat_{\substack{\a \sim A_1}} \Big| \sumflat_{\substack{\b \sim B}} \lambda_\b \chi_\a(\b) \Big|^2 + \frac{P}{\log{P}} \cdot \sL_1(A_2, B) \cdot \sum_{\b\sim B} |\lambda_\b|^2 \\
          & \qquad \qquad \qquad \qquad \qquad \qquad + \frac{\log(2A_1B)}{\log{P}} \cdot \sL_1(A_1, B) \cdot \sum_{\substack{\b\sim B}} |\lambda_\b|^2 \\
          \ll \ & \Big( \frac{\log(2A_1B)}{\log{P}} \cdot \sL_1(A_1, B) + \frac{P}{\log{P}} \cdot \sL_1(A_2, B) + \frac{\log(2A_1B)}{\log{P}} \cdot \sL_1(A_1, B) \Big)\cdot  \sum_{\substack{\b\sim B}} |\lambda_\b|^2.
     \end{align*}

     Choosing $C \geq 1$ sufficiently large, this implies
     \begin{equation*}
         \Sigma_1(A_1, B, \lambda) \ll \sL_1(A_2, B) \cdot \normt{\lambda}^2,
     \end{equation*}
     which gives the desired result.
\end{proof}

We need relations between the $\sL_i$ for $i \in \{1, 2, 3\}$. Bounding $\sL_1$ by $\sL_2$ is very simple.

\begin{lemma}[Completing the $\a$ sum]\label{lem:K_1_K_2_inequality}
    For any $A, B \geq 1$ we  have
    \begin{equation*}
        \sL_1(A, B) \ll \sL_2(A, B).
    \end{equation*}
\end{lemma}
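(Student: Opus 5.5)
The idea is to swap $\chi_\a(\b)$ for $\chi_\b(\a)$ using reciprocity \eqref{eq:reciprocity}, as in the proof of \cref{lem:symmetry}, and then drop the $n$-th power free condition on the outer sum by positivity. Fix $\lambda$ with $\normt{\lambda}=1$. For coprime $\a,\b\in I(S)$ we have $\chi_\a(\b)=\alpha([\a],[\b])\,\chi_\b(\a)$. If $(\a,\b)\neq 1$, then both $\chi_\a(\b)$ and $\chi_\b(\a)$ vanish by definition, so the identity holds in that case too.

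The plan is to split the outer sum in $\Sigma_1(A,B,\lambda)$ according to the class $\cC:=[\a]\in R_\c$, which takes only $|R_\c|\ll 1$ values. For fixed $\cC$ we set $\lambda^\cC_\b:=\lambda_\b\,\alpha(\cC,[\b])$, which has $|\lambda^\cC_\b|=|\lambda_\b|$ since $\alpha$ takes values in roots of unity. Then for every $\a$ with $[\a]=\cC$,
\begin{equation*}
    \sumflat_{\b\sim B}\lambda_\b\chi_\a(\b)=\sumflat_{\b\sim B}\lambda^\cC_\b\chi_\b(\a).
\end{equation*}

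Next we enlarge the range of $\a$. We drop the conditions that $\a$ is $n$-th power free and that $[\a]=\cC$, keeping only $\a\in I(S)$ with $\a\sim A$. This is legitimate because each term is nonnegative. The result is
\begin{equation*}
    \Sigma_1(A,B,\lambda)\le\sum_{\cC\in R_\c}\ \sum_{\substack{\a\in I(S)\\ \a\sim A}}\Big|\sumflat_{\b\sim B}\lambda^\cC_\b\chi_\b(\a)\Big|^2=\sum_{\cC\in R_\c}\Sigma_2(A,B,\lambda^\cC).
\end{equation*}
Since $\normt{\lambda^\cC}=\normt{\lambda}=1$, each term on the right is at most $\sL_2(A,B)$. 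Hence $\Sigma_1(A,B,\lambda)\le|R_\c|\,\sL_2(A,B)$, and taking the supremum over $\lambda$ gives the claim.

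I do not expect any real obstacle. The only point needing care is to check that reciprocity applies to all pairs appearing in the sum. Both $\a$ and $\b$ lie in $I(S)$, so they are coprime to $S$ and to $\c$, and the only possible failure of coprimality is $(\a,\b)\neq1$. In that case both sides vanish, as noted above, so the identity holds for every pair.
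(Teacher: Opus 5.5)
Your proposal is correct and follows the paper's proof: you split the outer sum by the class $[\a]\in R_\c$, use reciprocity \eqref{eq:reciprocity} to twist $\lambda$ by the unimodular factor $\alpha([\a],[\b])$, and drop the $n$-th power free condition on $\a$ by positivity. Your check that both $\chi_\a(\b)$ and $\chi_\b(\a)$ vanish when $(\a,\b)\neq 1$ is precisely the detail that the paper's one-line proof leaves implicit.
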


\begin{proof}
    Follows directly from the reciprocity relation \eqref{eq:reciprocity}, after separating the sum over $\a$ into classes of $R_\c$, similarly to \eqref{eq:example_reciprocity}, and then extending it to all $\a \in I(S)$ by positivity.
\end{proof}

Bounding $\sL_2$ by $\sL_3$ is also relatively simple, despite some technicalities.

\begin{lemma}[GCD sorting]\label{lem:GCD_sorting}
    For any $A, B \geq 1$, there exist $1 \leq \Delta_1 \ll \Delta_2$ such that
    \begin{equation*}
        \sL_2(A, B) \ll_\varepsilon B^\varepsilon \cdot \sL_3\Big( \frac{A}{\Delta_1}, \frac{B}{\Delta_2}\Big)
    \end{equation*}
    for every $\varepsilon > 0$.
\end{lemma}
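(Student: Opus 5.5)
We prove this by the standard GCD-extraction argument. Fix $\lambda$ with $\normt{\lambda}=1$. First we open the square in $\Sigma_2(A,B,\lambda)$ and dominate the indicator of $\a\sim A$ by $W(\cN(\a)/A)$, using positivity and $W \geq \bbone_{[1,2]}$. This gives
\begin{equation*}
\Sigma_2(A,B,\lambda)\leq \sumflat_{\b_1,\b_2\sim B}\lambda_{\b_1}\overline{\lambda_{\b_2}}\sum_{\a\subseteq\cO}W\Big(\frac{\cN(\a)}{A}\Big)\chi_{\b_1}(\a)\overline{\chi_{\b_2}(\a)}.
\end{equation*}
Next we split the pairs according to the classes $[\b_1^\star],[\b_2^\star],[\b_1],[\b_2]\in R_\c$. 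There are $O(1)$ such tuples. Before opening the square we apply Cauchy--Schwarz over the classes of $[\b]$, so that only diagonal pairs with $[\b_1]=[\b_2]=\cC$ remain, at the cost of a constant $|R_\c|$.

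We then sort by $\g=(\b_1,\b_2)$ and write $\b_i=\g\b_i'$. Sorting further, we write $\g=\g_0\g_1$, where $\g_0$ collects the primes for which $\chi_{\b_1}\overline{\chi_{\b_2}}$ stays ramified. Concretely, $\chi_{\b_1}\overline{\chi_{\b_2}}(\a)$ vanishes unless $(\a,\g)=1$, and on such $\a$ it equals $\chi_{\b_1'}\overline{\chi_{\b_2'}}(\a)$ times $\chi_{\g}\overline{\chi_{\g}}(\a)=1$, up to the coprimality condition. The multiplicativity \eqref{eq:chi_a_multiplicativity} justifies this factorization. The key step is to remove the condition $(\a,\g)=1$ by Möbius inversion: $\sum_{\h\mid\g}\mu(\h)$, with $\a=\h\a'$. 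This converts the $\a$-sum into a sum over $\a'$ of length $A/\cN(\h)$, weighted by the value of $\chi_{\b_1'}\overline{\chi_{\b_2'}}(\h)$, which has modulus at most $1$.

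We then replace $\chi_{\b_1'}\overline{\chi_{\b_2'}}$ by the primitive character $\tchi_{\b_1',\b_2'}$. These differ only at primes dividing $\c$, whose contribution is removed by a further finite Möbius expansion over divisors of $\c^\infty$. This part is harmless, since the relevant sizes are bounded after truncation; alternatively, the $\c$-part can be absorbed because $\a$ ranges over all ideals. We also need $[\b_1']=[\b_2']$, which follows from $[\b_1]=[\b_2]$ since $\b_i=\g\b_i'$. With this, the $\a'$-sum is exactly the inner sum of $\Sigma_3(A/\cN(\h),B/\cN(\g),\lambda^{(\g)},\Vec\cC)$, where $\lambda^{(\g)}_{\b'}=\lambda_{\g\b'}$ restricted to those $\b'$ with $\g\b'$ $n$-th power free. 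Note that $\b'$ is then itself $n$-th power free, as $\Sigma_3$ requires.

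Finally we dyadically localize $\cN(\g)\sim\Delta_2$ and $\cN(\h)\sim\Delta_1$, where $\h\mid\g$ forces $\Delta_1\ll\Delta_2$. We bound
\begin{equation*}
\Sigma_3\leq \sL_3\Big(\frac{A}{\Delta_1},\frac{B}{\Delta_2}\Big)\normt{\lambda^{(\g)}}^2,
\end{equation*}
with the usual dyadic adjustments absorbed by $W$'s support, or equivalently by a harmless change of dyadic scale. Summing over $\g$ and $\h$ gives $\sum_\g 2^{\omega(\g)}\normt{\lambda^{(\g)}}^2\ll_\varepsilon B^\varepsilon$, because each $\b$ has $\ll B^\varepsilon$ divisors. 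The remaining $O(\log^2 B)$ dyadic pairs $(\Delta_1,\Delta_2)$ are handled by picking the worst one, at a cost of $\log^2 B\ll B^\varepsilon$.

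The main obstacle is bookkeeping. We must ensure that the character appearing after sorting is exactly $\tchi_{\b_1',\b_2'}$ with the coprimality and class conditions that $\Sigma_3$ demands, and that the $n$-th power free conditions and class constraints $[\b_i'^\star]$ survive the change of variables. The absolute value inside $\Sigma_3$ lets us handle signs and phases such as $\mu(\h)$ and $\chi(\h)$ by the triangle inequality, since these are fixed once $\g$ and $\h$ are fixed.
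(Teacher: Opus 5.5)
Your skeleton (Cauchy--Schwarz over classes, smoothing by $W$, sorting by $\g=(\b_1,\b_2)$, multiplicativity, M\"obius inversion, passage to $\tchi_{\b_1',\b_2'}$, divisor bound) is the paper's. However, the step where you dispose of the M\"obius weights fails as written.

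After writing $\a=\h\a'$ with $\h\mid\g$, you still need $\chi_{\b_1'}\overline{\chi_{\b_2'}}(\a')=\tchi_{\b_1',\b_2'}(\a')\bbone_{(\a',\c)=1}$ and a further expansion $\a'=\r\a''$ with $\r\mid\c$ to reach a sum over all ideals. The inner sums then come multiplied by $\chi_{\b_1'}(\h)\overline{\chi_{\b_2'}(\h)}$ and by $\tchi_{\b_1',\b_2'}(\r)$. These are \emph{not} fixed once $\g,\h$ are fixed: they depend on $(\b_1',\b_2')$, which are summed inside the absolute value defining $\Sigma_3$. So the triangle inequality cannot remove them. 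With your coefficients $\lambda^{(\g)}_{\b'}=\lambda_{\g\b'}$, the $\a''$-sum is therefore not ``exactly the inner sum of $\Sigma_3$''.

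What is required is that each weight factor as $f(\b_1')\overline{f(\b_2')}$ with $|f|\le 1$, so it can be moved into the coefficients. For $\h$ this is immediate from complete multiplicativity, with $f(\b')=\chi_{\b'}(\h)$. For $\r\mid\c$ it is not. In general $\tchi_{\b_1',\b_2'}(\r)\neq\tchi_{\b_1'}(\r)\overline{\tchi_{\b_2'}(\r)}$, because $\tchi_{\b'}$ may be ramified at primes of $S$ while, by \cref{lem:conductors}, the product character is not. Your remarks that this part is harmless because ``sizes are bounded'', or can be ``absorbed because $\a$ ranges over all ideals'', do not address this coupling between $\b_1'$ and $\b_2'$.

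The missing input is \eqref{eq:_primitive_chi_decomposition}. Since $(\b_1',\b_2')=1$ and $[\b_1']=[\b_2']$, the normalization $\psi_{\b_1'}=\psi_{\b_2'}$ fixed just before \cref{lem:root_number} gives $\tchi_{\b_1',\b_2'}=\chi^{(\b_1')}\cdot\overline{\chi^{(\b_2')}}$. Hence $\tchi_{\b_1',\b_2'}(\r)=\chi^{(\b_1')}(\r)\overline{\chi^{(\b_2')}(\r)}$ for every $\r\mid\c\g$.

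The paper proceeds as follows:
\begin{enumerate}
\item It first rewrites the character as $\tchi_{\b_1',\b_2'}(\a)\bbone_{(\a,\c\g)=1}$.
\item It performs a single M\"obius inversion over $\r\mid\c\g$, which merges your $\h$ with the $\c$-part.
\item It works with $\lambda^\dagger_{\b'}=\lambda_{\g\b'}\,\chi^{(\b')}(\r)\,\bbone_{\g\b'\text{ $n$-th power free}}$, which satisfies $|\lambda^\dagger_{\b'}|\le|\lambda_{\g\b'}|$.
\end{enumerate}

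Two smaller corrections. First, the classes to fix are $[(\b_i')^\star]$ after the change of variables, not $[\b_i^\star]$; also, your decomposition $\g=\g_0\g_1$ is never used. Second, do not localize dyadically. The comparison $\sL_3(A/\cN(\r),B/\cN(\g))\le\sL_3(A/\Delta_1,B/\Delta_2)$ for nearby dyadic $\Delta_i$ is not justified. Instead, take $(\Delta_1,\Delta_2)=(\cN(\r),\cN(\g))$ at the worst of the finitely many pairs, noting $1\le\Delta_1\le\cN(\c)\Delta_2$.
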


\begin{proof}
    The proof is very similar to \cite{BGL14}*{Lemma 3.2}, which in turn closely resembles \cite{HB00}*{Lemma 7}. We fix a sequence $\{\lambda_\b\}_{\b \subseteq \cO}$ with $\normt{\lambda}=1$. By Cauchy--Schwarz,
    \begin{equation*}
        \sum_{\substack{\a \in I(S) \\ \a \sim A}} \Big| \sumflat_{\b \sim B} \lambda_\b \chi_\b(\a) \Big|^2 \ll \max_{\cC \in R_\c} \sum_{\substack{\a \in I(S) \\ \a \sim A}} \Big| \sumflat_{\substack{\b \sim B \\ [\b] = \cC}} \lambda_\b \chi_\b(\a) \Big|^2.
    \end{equation*}
    
    Inserting the smooth non-negative weight $W(\cN(\a)/A)$, opening the square, and sorting by $\d = (\b_1, \b_2)$, the display above is
    \begin{equation}\label{eq:K_3_intermediate}
        \ll \max_{\cC \in R_\c} \sumflat_{\d} \Bigg| \sum_{\substack{\a \in I(S)}} W\Big(\frac{\cN(\a)}{A}\Big)  \sumflat_{\substack{\b_1, \b_2 \sim B/\cN(\d) \\ (\b_1, \b_2)=1 \\ [\b_1] = [\b_2] = \cC [\d]^{-1} \\ \b_1\d, \b_2\d \text{ $n$-th free}}} \lambda_{\b_1 \d} \overline{\lambda_{\b_2 \d}} \cdot \chi_{\b_1\d}(\a) \overline{\chi_{\b_2\d}(\a)}\Bigg| .
    \end{equation}
    Here we can replace $\chi_{\b_1\d}(\a) \overline{\chi_{\b_2\d}(\a)}$ by $\chi_{\b_1}(\a) \overline{\chi_{\b_2}(\a)} \bbone_{(\a, \c\d)=1}$ using \eqref{eq:chi_a_multiplicativity}. Since $\a \in I(S)$, this in turn can be replaced by $\tchi_{\b_1}(\a) \overline{\tchi_{\b_2}(\a)} \bbone_{(\a, \d)=1}$, since either $(\a, \b_1\b_2)=1$ or both expressions are $0$, as $\b_1$ and $\b_2$ are $n$-th power free. Finally, this last expression can be replaced by $\tchi_{\b_1, \b_2}(\a) \bbone_{(\a, \d)=1}$, since $\a \in I(S)$ and $(\b_1, \b_2)=1$. Thus \eqref{eq:K_3_intermediate} is
    \begin{equation}\label{eq:K_3_intermediate_2}
        \ll \sumflat_{\d} \max_{\cC \in R_\c} \Bigg| \sum_{\substack{\a \in I(S) \\ (\a, \d)=1}} W\Big(\frac{\cN(\a)}{A}\Big)  \sumflat_{\substack{\b_1, \b_2 \sim B/\cN(\d) \\ (\b_1, \b_2)=1 \\ [\b_1] = [\b_2] = \cC \\ \b_1\d, \b_2\d \text{ $n$-th free}}} \lambda_{\b_1 \d} \overline{\lambda_{\b_2 \d}} \cdot \tchi_{\b_1, \b_2}(\a)\Bigg|.
    \end{equation}

    We now remove the coprimality condition $(\a, \c\d)=1$ by M\"obius inversion. Recalling the decomposition $\tchi_{\b_1, \b_2} = \chi^{(\b_1)} \cdot \overline{\chi^{(\b_2)}}$ of \eqref{eq:_primitive_chi_decomposition} and denoting
    \begin{equation*}
        \lambda_\b^\dagger = \lambda_\b^\dagger(\d, \r) := \lambda_{\b\d} \cdot \chi^{(\b)}(\r) \cdot \bbone_{\b\d \text{ $n$-th free}},
    \end{equation*}
    observe that \eqref{eq:K_3_intermediate_2} is
    \begin{align*}
        & = \sumflat_{\d} \max_{\cC \in R_\c} \Bigg| \sum_{\r \mid \c\d} \mu(\r) \sum_{\substack{\a \subseteq \cO}} W\Big(\frac{\cN(\a)}{A /\cN(\r)}\Big)  \sumflat_{\substack{\b_1, \b_2 \sim B/\cN(\d) \\ (\b_1, \b_2)=1 \\ [\b_1] = [\b_2] = \cC}} \lambda^\dagger_{\b_1} \overline{\lambda^\dagger_{\b_2}} \cdot \tchi_{\b_1, \b_2}(\a) \Bigg| \\
        & \ll \sumflat_{\d} \sum_{\r \mid \c\d}  \sL_3\Big(\frac{A}{\cN(\r)}, \frac{B}{\cN(\d)}\Big) \sum_{\substack{\b \sim B/\cN(\d)}} |\lambda^\dagger_{\b}|^2 \ll \max_{1\leq \Delta_1 \ll \Delta_2} \sL_3\Big(\frac{A}{\Delta_1}, \frac{B}{\Delta_2}\Big) \sum_{\d} \sum_{\r \mid \c\d} \sum_{\substack{\b \sim B/\cN(\d)}} |\lambda_{\b\d}|^2.
    \end{align*}
    By the divisor bound, the last three sums over $\d, \r, \b$ are $\ll_\varepsilon B^\varepsilon \sum_\b |\lambda_\b|^2 = B^\varepsilon$, so the result follows.
\end{proof}

In the next two sections, we bound $\sL_3$ in terms of $\sL_2$ using the Poisson summation formula, and $\sL_2$ in terms of $\sL_1$ by removing $n$-th powers.


\section{Application of Poisson summation}

To relate $\sL_3$ back to $\sL_2$, we apply Poisson summation as in \cites{HB00, GL13, BGL14}. However, a new issue arises: since here $\b_1, \b_2 \in I(S)$ are not assumed to be squarefree, the root number of $\tchi_{\b_1, \b_2}$ might not split as a product of functions of $\b_1$ and $\b_2$. 

To remedy this issue, we observe that at least it is possible to uncouple the dependence of $\epsilon(\tchi_{\b_1, \b_2})$ on the conductors $\b_1^\star$ and $\b_2^\star$. It is also crucial that $\b_1^\sharp$ and $\b_2^\sharp$ do not interact directly (however there are cross dependencies $\b_1^\star \leftrightarrow \b_2^\sharp$ and $\b_2^\star \leftrightarrow \b_1^\sharp$, whose shape is important for the argument). This maneuver is a bit technical, but turns out to not be wasteful: we will recover the lost factors since the length after Poisson summation is proportional to the conductor $\b_1^\star\b_2^\star$, instead of the larger $\b_1\b_2 = \b_1^\star\b_2^\star\cdot\b_1^\sharp\b_2^\sharp$. 

\begin{lemma}[Poisson summation]\label{lem:Poisson_K_3}
    For any $A, B \geq 1$ and $\varepsilon>0$, we have
    \begin{equation*}
        \sL_3(A, B) \ll_\varepsilon A + (AB)^\varepsilon \max_{\substack{1\leq B_1, B_2 \leq B }} \max_{\substack{1 \leq K_i \leq (AB)^\varepsilon\frac{B^2}{A B_i } \\ 1 \leq L_i \leq \frac{B}{B_i}}} \frac{A (B_1B_2)^{1/2}}{B} \cdot \sL_2(K_1, L_1)^{1/2} \cdot \sL_2(K_2, L_2)^{1/2}.
    \end{equation*}
\end{lemma}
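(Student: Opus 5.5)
Fix $\lambda$ with $\normt{\lambda}=1$ and $\Vec{\cC}=(\cC_1,\cC_2,\cC)$. The plan is to apply \cref{lem:general_Poisson_formula} to the inner sum over $\a$ in $\Sigma_3$, for each coprime pair $(\b_1,\b_2)$. If $\tchi_{\b_1,\b_2}$ is trivial, then by \cref{lem:conductors} we have $\b_1^\star\b_2^\star=\cO$. Hence $\b_1=\b_2=\cO$ (they are coprime and $n$-th power free), and this diagonal contributes $\ll A|\lambda_\cO|^2\le A$; this is the source of the term $A$.

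Otherwise the conductor is $\f=\b_1^\star\b_2^\star$, and the dual sum has length $\cN(\b_1^\star\b_2^\star)/A$ with weight $\dot W$. By \eqref{eq:W_dot_bound}, the dual sum can be truncated to $\cN(\a)\le (AB)^\varepsilon \cN(\b_1^\star\b_2^\star)/A$ at negligible cost. I then insert \cref{lem:root_number} for $\epsilon(\tchi_{\b_1,\b_2})$. The factor $\eta$ is constant once $\Vec{\cC}$ is fixed, so it drops out under the absolute value. The factors $\beta_{\b_i}$ get absorbed into $\lambda'_{\b}:=\lambda_\b\beta_\b$.

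Next I localize dyadically $\b_i^\sharp\sim B_i$, so $\b_i^\star\asymp B/B_i$ up to constants, and the dual $\a$ ranges over ideals of norm up to $K_i$-type sizes $\le(AB)^\varepsilon B^2/(AB_1B_2)$, split dyadically. This costs only $\log$ factors. The normalization is $A/\sqrt{\cN(\f)}\asymp A(B_1B_2)^{1/2}/B$.

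Write $\overline{\tchi_{\b_1,\b_2}(\a)}=\overline{\chi^{(\b_1)}(\a)}\chi^{(\b_2)}(\a)$ via \eqref{eq:_primitive_chi_decomposition}. For $\a\in I(S)$ coprime to $\b_1\b_2$ this is $\overline{\chi_{\b_1}(\a)}\chi_{\b_2}(\a)$ times $\psi$-factors, which cancel since $[\b_1]=[\b_2]$. The $\a$ not in $I(S)$ are handled by writing $\a=\s\a'$ with $\s$ supported on $S$, which gives finitely many bounded-norm twists up to the decay of $\dot W$. The whole summand then factors as $F_1(\b_1^\sharp;\b_2^\sharp,\a)\cdot\overline{F_2(\b_2^\sharp;\b_1^\sharp,\a)}$, where
\[
F_1=\sum_{\b_1^\star}\lambda'_{\b_1^\sharp\b_1^\star}\,\chi_{\b_1^\sharp}(\b_2^\star)\,\overline{\chi_{\b_1}(\a)},
\]
and $F_2$ is analogous. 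The difficulty is that $\chi_{\b_1^\sharp}(\b_2^\star)$ couples $\b_1^\sharp$ with $\b_2^\star$ rather than with $\b_1^\star$. To fix this, I use reciprocity \eqref{eq:reciprocity} to rewrite $\chi_{\b_1^\sharp}(\b_2^\star)$ as $\chi_{\b_2^\star}(\b_1^\sharp)$ times $\alpha([\b_1^\sharp],[\b_2^\star])$, which depends only on classes. After splitting into classes $[\b_i^\sharp]$, the $\b_2$-sum becomes
\[
\sum_{\b_2^\star}\lambda'_{\b_2}\,\overline{\chi_{\b_2^\star}(\b_1^\sharp)}\,\chi_{\b_2}(\a)\approx\sum_{\b_2^\star}\lambda''\,\chi_{\b_2^\star}(\a\,(\b_1^\sharp)^{-1})\,\chi_{\b_2^\sharp}(\a),
\]
and similarly for $\b_1$. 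So each factor is a sum over $\b_i^\star\sim B/B_i$ of a character evaluated at a combined variable. The coprimality conditions $(\b_1,\b_2)=1$ and $n$-th power freeness of $\b_1^\sharp\b_1^\star$ are removed by M\"obius inversion / divisor-bound arguments as in \cref{lem:GCD_sorting}, at cost $(AB)^\varepsilon$.

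Now apply Cauchy--Schwarz over $(\b_1^\sharp,\b_2^\sharp,\a)$. For the first factor I get
\[
\sum_{\b_1^\sharp\sim B_1}\sum_{\b_2^\sharp\sim B_2}\sum_{\a\le K}\Big|\sum_{\b_1^\star}\lambda'_{\b_1^\sharp\b_1^\star}\chi_{\b_1^\star}(\b_2^\sharp\a)\chi_{\b_1^\sharp}(\a)\Big|^2 .
\]
Here I combine $\d=\b_2^\sharp\a$, so $\cN(\d)\le K_1\ll(AB)^\varepsilon B^2/(AB_1)$. The factor $\chi_{\b_1^\sharp}(\a)$ still depends on $\a$ and not only on $\d$. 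I would first try to absorb it by writing $\chi_{\b_1^\sharp\b_1^\star}$ via multiplicativity as $\chi_{\b_1}(\a)\cdot\chi_{\b_1^\star}(\b_2^\sharp)$, i.e.\ keep the full $\chi_{\b_1}$ on $\a$ and only the star part on $\b_2^\sharp$. One then notes that $\chi_{\b_1}(\d)=\chi_{\b_1}(\a)\chi_{\b_1^\sharp}(\b_2^\sharp)\chi_{\b_1^\star}(\b_2^\sharp)$, and $\chi_{\b_1^\sharp}(\b_2^\sharp)$ depends only on the fixed outer variables $\b_1^\sharp,\b_2^\sharp$ (it is a unimodular twist of $\lambda'$ for fixed $\b_2^\sharp$). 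By the divisor bound on the representations $\d=\b_2^\sharp\a$, this is $\ll(AB)^\varepsilon\sum_{\b_1^\sharp}\sum_{\d}|\sum_{\b_1^\star\sim L_1}\mu_{\b_1^\star}\chi_{\b_1^\sharp\b_1^\star}(\d)|^2$ with $L_1\le B/B_1$. Since the inner $\b$ ranges over $\b_1^\sharp\cdot(\text{squarefree})$, this is bounded by $\sL_2(K_1,L_1)$ after a dyadic decomposition of $\d$, summed over $\b_1^\sharp$ to give $\normt{\lambda}^2$.

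I expect the main obstacle to be this last bookkeeping step. One must check that the twists depending on the outer variables $(\b_1^\sharp,\b_2^\sharp)$ can be made to depend only on the variable being squared over, so that the inner sum is genuinely an instance of $\Sigma_2$. One must also check that the residual $\sharp$-dependence of the character can be absorbed either into the coefficients or into $\d$ without inflating lengths.
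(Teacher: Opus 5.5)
Your outline is the paper's route: Poisson via \cref{lem:general_Poisson_formula}, \cref{lem:root_number} with $\lambda'_\b=\lambda_\b\beta_\b$, the splitting $\b_i=\b_i^\sharp\b_i^\star$ with $\b_i^\sharp\sim B_i$, reciprocity on the cross terms, M\"obius inversion for $(\b_1^\star,\b_2^\star)=1$, Cauchy--Schwarz over the outer variables, and merging $\b_2^\sharp\a$ into one variable. However, two steps do not go through as written.

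\textbf{The summand does not yet factor.} After Poisson it is not of the form $F_1\cdot\overline{F_2}$, because the dual weight $\dot W\big(\cN(\a)A/\cN(\b_1^\star\b_2^\star)\big)$ couples $\b_1^\star$ with $\b_2^\star$. Truncating or dyadically localizing $\a$ does not remove this coupling, so you must separate variables. The paper does this as follows. It discards the ranges $\cN(\a')>(AB)^\varepsilon B^2/(AB_1B_2\cN(\s))$ using \eqref{eq:W_dot_bound}. It then inserts \eqref{eq:W_dot_def}, shifts to $\Re(s)=\varepsilon$, and truncates to $|\Im(s)|\le(AB)^\varepsilon$. Combined with the normalization $\cN(\b_1^\star\b_2^\star)^{-1/2}$, the $\b^\star$-dependence becomes $\cN(\b_1^\star)^{s-1/2}\cN(\b_2^\star)^{s-1/2}$. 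This goes into the coefficients, and everything else depends only on $\a',\s,t$.

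\textbf{The last step.} The step you single out as the main obstacle is actually trivial, but your proposed resolution does not yield the kernel in the statement. You apply Cauchy--Schwarz with $\b_1^\sharp,\b_2^\sharp,\a$ outside the absolute value, together with the M\"obius variable $\r$ (writing $\b_i^\star=\r\b_i'$). So every factor depending only on these variables has modulus at most $1$ and can simply be dropped. Examples are $\chi_{\b_1^\sharp}(\a)$, $\chi_{\b_2^\sharp}(\a)$, $\chi_\r(\b_2^\sharp\a)$ and $\chi_\r(\b_1^\sharp\a)$. What remains in the first factor is $\overline{\chi_{\b_1'}(\b_2^\sharp\a)}$ with $\b_1'$ squarefree of norm $\asymp B/(B_1R)$. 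This is a genuine instance of $\Sigma_2(K_1,L_1)$ with $L_1\le B/B_1$.

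Your detour instead rebuilds $\chi_{\b_1^\sharp\b_1^\star}(\d)$. Reading this as a $\Sigma_2$-sum whose inner variable ``ranges over $\b_1^\sharp\cdot(\text{squarefree})$'' gives inner length $\asymp B$, i.e.\ $\sL_2(K_1,B)$. The lemma needs $L_i\le B/B_i$ precisely to compensate the factor $(B_1B_2)^{1/2}$, so you would have to split off $\chi_{\b_1^\sharp}(\d)$ again and discard it.

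\textbf{Orientation.} Fix the conjugation in your intermediate display. The term $\chi_{\b_1^\sharp}(\b_2^\star)$ from the root number and $\chi_{\b_2}(\a)$ from $\overline{\tchi_{\b_1,\b_2}(\a)}$ are both unconjugated. So after reciprocity the $\b_2$-factor involves $\chi_{\b_2^\star}(\b_1^\sharp\a)$, not $\chi_{\b_2^\star}(\a(\b_1^\sharp)^{-1})$. The product form is what makes the merged variable an integral ideal of norm $\ll(AB)^\varepsilon B^2/(AB_2)$. Your later display for the $\b_1$-factor has this correct form.
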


\begin{proof}
    If $B = 1$ then the trivial bound $\sL_3(A, B) \ll A $ suffices, so assume $B > 1$. Let $\{\lambda_\b\}_{\b \subseteq \cO}$ be a sequence with $\normt{\lambda}=1$. By \cref{lem:conductors}, the characters $\tchi_{\b_1, \b_2}$ appearing in $\Sigma_3(A, B, \lambda, \Vec{\cC})$ are primitive of conductor $\b_1^\star \b_2^\star$. Applying the Poisson summation formula of \cref{lem:general_Poisson_formula} to the sum over $\a$, expressing the root number $\epsilon(\tchi_{\b_1, \b_2})$ using \cref{lem:root_number}, and denoting $\lambda'_\b := \lambda_\b \cdot \beta_\b$, we obtain
    \begin{equation*}
        \Sigma_3(A, B, \lambda, \Vec{\cC}) \ll A \sumflat_{\substack{\b_1, \b_2 \sim B \\ (\b_1, \b_2)=1 \\ [\b_1^\star] = \cC_1,\, [\b_2^\star] = \cC_2 \\ [\b_1] = [\b_2] = \cC}} \chi_{\b_1^\sharp}(\b_2^\star) \cdot \overline{\chi_{\b_2^\sharp}(\b_1^\star)} \cdot \lambda'_{\b_1} \overline{\lambda'_{\b_2}} \sum_{\a \subseteq \cO} \dot{W}\Big(\frac{\cN(\a)}{\cN(\b_1^\star \b_2^\star) / A}\Big) \frac{\overline{\tchi_{\b_1, \b_2}(\a)}}{\sqrt{\cN(\b_1^\star \b_2^\star)}}.
    \end{equation*}
    
    Decompose $\a = \s\a'$ for $\a' \in I(S)$ and $\s \mid \c^\infty$, where observe that $\a = (0)$ does not contribute because $\tchi_{\b_1, \b_2}$ is non-trivial, as $B > 1$. Also decompose the sums over $\b_1$ and $\b_2$ according to \eqref{eq:ideal_decomposition} into $\b_i = \b_i^\sharp \cdot \b_i^\star$. Thus for $i \in \{1, 2\}$ we write\footnote{In what follows, the variables $\b_i^\sharp$ and $\b_i^\star$ are free and not necessarily related to an underlying $\b_i$. In other words, the symbols $\sharp$ and $\star$ should not be interpreted as operators, but are only meant to keep the notation consistent with previous steps for the convenience of the reader.}
    \begin{equation*}
        \sum_{0 \neq \a \subseteq \cO} \mapsto \sum_{\s \mid \c^\infty} \sum_{\a' \in I(S)} (\a = \s\a') \qquad \text{and} \qquad \sumflat_{\substack{\b_1, \b_2 \sim B \\ (\b_1, \b_2)=1 \\ [\b_1^\star] = \cC_1,\, [\b_2^\star] = \cC_2 \\ [\b_1] = [\b_2] = \cC}} \mapsto \sumflat_{\substack{[\b_1^\sharp] = \cC \cC_1^{-1} \\ [\b_2^\sharp] = \cC \cC_2^{-1}}} \sumstar_{\substack{\b_i^\star \sim B / \cN(\b_i^\sharp) \\ (\b_1^\star, \b_2^\star)=1 \\ \b_i^\sharp \b_i^\star \text{ $n$-th free} \\ \b_i^\sharp \mid (\b_i^\star)^\infty,\ [\b_i^\star] = \cC_i}} (\b_i = \b_i^\sharp \b_i^\star),
    \end{equation*}
    where recall that $\sumstar_\b$ denotes summation over squarefree $\b \in I(S)$. 
    
    Split into dyadic ranges $\a' \sim K$, $\b_1^\sharp \sim B_1$, and $\b_2^\sharp \sim B_2$, where $1 \leq B_1, B_2 < 2B$ and $K\geq 1$ run over powers of $2$. Denote $Z := (AB)^\varepsilon$, where $\varepsilon > 0$. Observe by the decay of $\dot{W}$ in \eqref{eq:W_dot_bound} that the values $K > Z B^2 / A B_1 B_2 \cN(\s)$ contribute an error $\ll_\varepsilon (AB)^{-100}$. Hence
    \begin{align}
        &\Sigma_3(A, B, \lambda, \Vec{\cC}) \ll_\varepsilon (AB)^{-100} + ZA \sum_{\s \mid \c^\infty} \sup_{\substack{1\leq B_1, B_2 < 2 B \\ 1\leq K \leq  \frac{Z B^2}{A B_1 B_2 \cN(\s)}}} \sumflat_{\substack{\b_1^\sharp \sim B_1 \\ [\b_1^\sharp] = \cC \cC_1^{-1}}} \sumflat_{\substack{\b_2^\sharp \sim B_2 \\ [\b_2^\sharp] = \cC \cC_2^{-1}}} \nonumber \\
        & \times \Bigg| \sumstar_{\substack{\b_i^\star \sim B / \cN(\b_i^\sharp) \\ (\b_1^\star, \b_2^\star)=1 \\ \b_i^\sharp \b_i^\star \text{ $n$-th free} \\ \b_i^\sharp \mid (\b_i^\star)^\infty,\ [\b_i^\star] = \cC_i}} \chi_{\b_1^\sharp}(\b_2^\star) \cdot \overline{\chi_{\b_2^\sharp}(\b_1^\star)} \cdot \lambda'_{\b_1^\sharp \b_1^\star} \overline{\lambda'_{\b_2^\sharp \b_2^\star}} \sum_{\substack{\a' \in I(S) \\ \a' \sim K}} \dot{W}\Big(\frac{\cN(\s\a')}{\cN(\b_1^\star \b_2^\star) / A}\Big)\frac{\overline{\tchi_{\b_1^\sharp \b_1^\star, \b_2^\sharp \b_2^\star}(\s\a')}}{\sqrt{\cN(\b_1^\star \b_2^\star)}}\Bigg|. \qquad \label{eq:interm_Poisson}
    \end{align}

    Since the summation ensures that $\a', \b_1, \b_2 \in I(S)$, $(\b_1, \b_2)=1$, $[\b_1] = [\b_2]$, and $\b_1, \b_2$ are $n$-th power free, where we temporarily denote $\b_i := \b_i^\sharp \b_i^\star$, the identity \eqref{eq:_primitive_chi_decomposition} gives
    \begin{equation*}
        \tchi_{\b_1, \b_2}(\s\a') = \tchi_{\b_1, \b_2}(\s) \cdot \tchi_{\b_1, \b_2}(\a') = \chi^{(\b_1)}(\s) \cdot \overline{\chi^{(\b_2)}(\s)} \cdot \tchi_{\b_1, \b_2}(\a').
    \end{equation*}
     It also follows from \eqref{eq:chi_a_multiplicativity} that
     \begin{equation*}
         \tchi_{\b_1, \b_2}(\a') = \tchi_{\b_1}(\a') \cdot \overline{\tchi_{\b_2}(\a')} = \chi_{\b_1}(\a') \cdot \overline{\chi_{\b_2}(\a')} = \chi_{\b_1^\sharp}(\a') \cdot \overline{\chi_{\b_2^\sharp}(\a')} \cdot \chi_{\b_1^\star}(\a') \cdot \overline{\chi_{\b_2^\star}(\a')}.
     \end{equation*}
     
     We apply the identities above to \eqref{eq:interm_Poisson}, then insert the definition \eqref{eq:W_dot_def} of $\dot{W}$ and shift the line of integration to $\Re(s) = \varepsilon$. The rapid decay of the Mellin transform $\widehat{W}$ allows us to truncate to $|\Im(s)| \leq Z$, once again at the cost of an overall error $\ll_\varepsilon (AB)^{-100}$. Thus apart from this error term, the sum over $\b_1^\star$ and $\b_2^\star$ in \eqref{eq:interm_Poisson} is
    \begin{equation}\label{eq:interm_2_Poisson}
        \ll_\varepsilon Z^{O(1)} \int_{-Z}^Z \sum_{\substack{\a' \in I(S) \\ \a' \sim K}} \Bigg| \sumflat_{\substack{\b_1^\star \sim B / \cN(\b_1^\sharp) \\ \b_2^\star \sim B / \cN(\b_2^\sharp) \\ (\b_1^\star, \b_2^\star)=1}}   \lambda_{\b_1^\sharp}^{\dagger}(\b_1^\star) \cdot \lambda_{\b_2^\sharp}^{\dagger\dagger}(\b_2^\star)  \cdot \frac{\overline{\chi_{\b_2^\sharp}(\b_1^\star)}\cdot \overline{\chi_{\b_1^\star}(\a')} \cdot \chi_{\b_1^\sharp}(\b_2^\star) \cdot \chi_{\b_2^\star}(\a')}{\cN(\b_1^\star \b_2^\star)^{1/2 -\varepsilon + it}} \Bigg| \, |dt|,
    \end{equation} 
    where for $\b \in I(S)$ we set
    \begin{equation*}
        \lambda_{\b_1^\sharp}^\dagger(\b) := \lambda'_{\b_1^\sharp \b} \cdot \overline{\chi^{(\b_1^\sharp \b)}(\s)} \cdot \bbone_{\substack{\b \text{ squarefree} \\ \b_1^\sharp \b \text{ $n$-th free} \\ \b_1^\sharp \mid \b^\infty,\ [\b] = \cC_1}}\quad \text{and} \quad
        \lambda_{\b_2^\sharp}^{\dagger\dagger}(\b) := \overline{\lambda'_{\b_2^\sharp \b}} \cdot \chi^{(\b_2^\sharp \b)}(\s) \cdot \bbone_{\substack{\b \text{ squarefree} \\ \b_2^\sharp \b \text{ $n$-th free} \\ \b_2^\sharp \mid \b^\infty,\ [\b] = \cC_2}}.
    \end{equation*}
    
    The classes $[\b_i^\star] = \cC_i$ and $[\b_j^\sharp] = \cC\cC_j^{-1}$ are fixed for $i, j\in \{1, 2\}$, so by \eqref{eq:reciprocity} we may replace the two cross terms of the form $\chi_{\b_j^\sharp}(\b_i^\star)$ in \eqref{eq:interm_2_Poisson} by $\chi_{\b_i^\star}(\b_j^\sharp)$. Using M\"obius inversion to remove the coprimality condition in  \eqref{eq:interm_2_Poisson} and writing $\b_i^\star = \r \b'_i$, we conclude that $\Sigma_3(A, B, \lambda, \Vec{\cC})$ is
    \begin{equation}\label{eq:endgame_Poisson}
        \ll_\varepsilon 1 + Z^{O(1)} A \sum_{\s \mid \c^\infty} \sup_{\substack{1\leq B_1, B_2, R < 2 B \\ 1\leq K \leq \frac{Z B^2}{A B_1 B_2 \cN(\s)}}} \int_{-Z}^Z \sumflat_{\substack{\b_1^\sharp \sim B_1 \\ [\b_1^\sharp] = \cC \cC_1^{-1}}} \sumflat_{\substack{\b_2^\sharp \sim B_2 \\ [\b_2^\sharp] = \cC \cC_2^{-1}}} \sumstar_{\r\sim R} \sum_{\substack{\a' \in I(S) \\ \a' \sim K}} |\cS_1| \cdot |\cS_2| \, |dt|,
    \end{equation}
    where
    \begin{equation*}
        \cS_1 :=  \sumflat_{\substack{\b'_1 \sim B / \cN(\b_1^\sharp \r)}}   \lambda_{\b_1^\sharp}^{\dagger}(\r \b'_1) \cdot \frac{\overline{\chi_{\b'_1}(\b_2^\sharp \a')}}{\cN(\r\b'_1)^{1/2 -\varepsilon + it}} \quad \text{and} \quad \cS_2 := \sumflat_{\substack{\b'_2 \sim B / \cN(\b_2^\sharp \r)}}   \lambda_{\b_2^\sharp}^{\dagger\dagger}(\r \b'_2) \cdot \frac{\chi_{\b'_2}(\b_1^\sharp\a')}{\cN(\r\b'_2)^{1/2 -\varepsilon + it}}.
    \end{equation*}
    
    Combining $\b_2^\sharp \a' \in I(S)$ into a single variable and using the divisor bound, we have
    \begin{align*}
        & \sumflat_{\substack{\b_1^\sharp \sim B_1 \\ [\b_1^\sharp] = \cC \cC_1^{-1}}}\sumstar_{\r\sim R} \sumflat_{\substack{\b_2^\sharp \sim B_2 \\ [\b_2^\sharp] = \cC \cC_2^{-1}}} \sum_{\substack{\a' \in I(S) \\ \a' \sim K}} |\cS_1|^2 \\
        \ll_\varepsilon\ & Z \max_{\substack{B_2 K \leq K_1 \leq 2 B_2K \\ \frac{B}{4 B_1 R} \leq L_1 \leq \frac{B}{B_1 R}}} \sL_2(K_1, L_1) \sumflat_{\substack{\b_1^\sharp \sim B_1}} \sumstar_{\r\sim R} \sumstar_{\substack{\b'_1 \sim B / \cN(\b_1^\sharp \r)}} \frac{|\lambda_{\b_1^\sharp\r\b'_1}|^2}{\cN(\r\b'_1)^{1-2\varepsilon}} \\
        \ll_\varepsilon\ & Z^{O(1)} \max_{\substack{B_2 K \leq K_1 \leq 2 B_2K \\ \frac{B}{4 B_1 R} \leq L_1 \leq \frac{B}{B_1 R}}} \frac{B_1}{B} \cdot \sL_2(K_1, L_1).
    \end{align*}
    An analogous inequality holds for $|\cS_2|^2$, combining $\b_1^\sharp \a'$. Thus applying the Cauchy--Schwarz inequality to the sums over $\b_1^\sharp, \b_2^\sharp, \r, \a'$ in \eqref{eq:endgame_Poisson} gives that $\Sigma_3(A, B, \lambda, \Vec{\cC})$ is 
    \begin{align*}
        &\ll_\varepsilon 1 + Z^{O(1)} \sup_{\substack{1\leq B_1, B_2, R < 2 B }} \max_{\substack{\frac{B_1B_2}{B_i} \leq K_i \leq \frac{2 Z B^2}{A B_i } \\ \frac{B}{4 B_i R} \leq L_i \leq \frac{B}{B_i R}}} \frac{A (B_1B_2)^{1/2}}{B}\cdot  \sL_2(K_1, L_1)^{1/2} \cdot \sL_2(K_2, L_2)^{1/2} \\
        & \ll 1 + Z^{O(1)} \sup_{\substack{1\leq B_1, B_2 \leq B }} \max_{\substack{1 \leq K_i \leq \frac{2 Z B^2}{A B_i } \\ 1 \leq L_i \leq \frac{B}{B_i}}} \frac{A (B_1B_2)^{1/2}}{B} \cdot \sL_2(K_1, L_1)^{1/2} \cdot \sL_2(K_2, L_2)^{1/2},
    \end{align*}
    and the desired result follows after adjusting $\varepsilon > 0$.
\end{proof}


\section{Removal of \texorpdfstring{$n$}{}-th powers}

The key improvement in our argument occurs in the passage from $\sL_2$ to $\sL_1$. This is where $n$-th power free ideals are more convenient than squarefree ones. Indeed, to extract a sum over squarefree ideals, it is possible to decompose $\a = \a_1\a_2^2 \cdots \a_{n-1}^{n-1} \a_n^n$ for squarefree $\a_1, \a_2, \dots, \a_{n-1}$ and localize each variable. Then it is unclear how to do better than to trivially bound all except the longest of the sums over $\a_1, \dots, \a_{n-1}$. On the other hand, in our case it is possible to sum over the $n$-th power free ideal $\a_1\a_2^2 \cdots \a_{n-1}^{n-1}$ and trivially bound only the sum over $\a_n$, which is necessarily very short.

\begin{lemma}[Trivial estimation of $n$-th powers]\label{lem:n-th_powers}
    For any $A, B \geq 1$ and $\varepsilon > 0$ we have
    \begin{equation*}
        \sL_2(A, B) \ll_\varepsilon (AB)^\varepsilon \max_{1\leq R \leq (2A)^{1/n}} R \cdot \sL_1\Big(\frac{A}{R^n}, B\Big).
    \end{equation*}
\end{lemma}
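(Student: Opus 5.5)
Fix $\lambda$ with $\normt{\lambda}=1$. Write each $\a \in I(S)$ with $\a \sim A$ uniquely as $\a = \d\e^n$, where $\d \in I(S)$ is $n$-th power free and $\e \in I(S)$. Then $\cN(\e)^n \leq \cN(\a) < 2A$, so $\cN(\e) \leq (2A)^{1/n}$.

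For $\b$ $n$-th power free we have $\chi_\b(\d\e^n) = \chi_\b(\d)\chi_\b(\e)^n$. Since $\chi_\b$ has order dividing $n$, this equals $\chi_\b(\d)\bbone_{(\b,\e)=1}$, using \eqref{eq:chi_a_multiplicativity}; here $\chi_\b(\e)^n$ is $1$ if $(\e,\b\c)=1$ and $0$ otherwise, and $\e \in I(S)$ already. Hence
\begin{equation*}
\Sigma_2(A,B,\lambda) = \sum_{\substack{\e \in I(S)\\ \cN(\e)\leq (2A)^{1/n}}} \ \sumflat_{\d \sim A/\cN(\e)^n}\Big|\sumflat_{\b\sim B} \lambda_\b\bbone_{(\b,\e)=1}\chi_\b(\d)\Big|^2,
\end{equation*}
where $\d\sim A/\cN(\e)^n$ means $A/\cN(\e)^n \leq \cN(\d) < 2A/\cN(\e)^n$.

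Next split $\e$ dyadically, $\e \sim R$ with $R \leq (2A)^{1/n}$; there are $O(\log A)$ ranges. For $\e \sim R$, the $\d$-range lies inside $[A/(2R)^n, 2A/R^n)$, which is covered by $O(1)$ dyadic intervals $\d \sim D$ with $D \asymp A/R^n$. For each fixed $\e$, bound the inner expression by the operator norm, using the sequence $\lambda_\b\bbone_{(\b,\e)=1}$, whose $\ell^2$-norm is at most $1$. This requires the characters $\chi_\b(\d)$, not $\chi_\d(\b)$, which is the form of $\sL_1$. So first pass through reciprocity as in the proof of \cref{lem:symmetry}: split $\b$ into classes $[\b]=\cC$ (losing only the constant $|R_\c|$ via Cauchy--Schwarz), then use \eqref{eq:reciprocity} to write $\chi_\b(\d) = \chi_\d(\b)\,\overline{\alpha([\d],\cC)}$. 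After also splitting $\d$ into classes, the factor $\alpha$ is a constant of modulus one and can be pulled out. Each $\e$ therefore contributes $\ll \sL_1(D, B) \ll \sL_1(c A/R^n, B)$ for some $c\asymp 1$.

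Summing trivially over the $\ll R$ ideals $\e\sim R$ gives
\begin{equation*}
\Sigma_2(A,B,\lambda) \ll \log(2A)\max_{R}\, R\, \max_{D\asymp A/R^n} \sL_1(D,B).
\end{equation*}
The number of ideals of norm $\leq 2R$ is $\ll R$ with constant depending on $K$, so no $\varepsilon$-loss arises there.

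The only remaining issue, which I expect to be the main obstacle, is replacing $\sL_1(D,B)$ for $D \asymp A/R^n$ by exactly $\sL_1(A/R^n,B)$, and handling $D<1$ at the endpoint $R\approx(2A)^{1/n}$. For $D\leq A/R^n$ up to constants I would use \cref{lem:monotonicity}, which only loses when comparing ranges differing by a factor $\gg \log(2AB)$. Instead, reparametrize: take $R' := R\,(C\log(2AB))^{-1/n}$, so that $A/R'^n \geq C D\log(2DB)$. Then \cref{lem:monotonicity} gives $\sL_1(D,B)\ll \sL_1(A/R'^n,B)$, and $R \ll (AB)^{\varepsilon}R'$. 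Small $R'<1$ is handled by taking $R'=1$, again via monotonicity. The factors $\log(2A)$ and $(\log 2AB)^{1/n}$ are absorbed into $(AB)^\varepsilon$, which yields the stated bound after taking the supremum over $\lambda$.
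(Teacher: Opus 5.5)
Your decomposition $\a=\d\e^n$, the reciprocity step and the trivial summation over the $\ll R$ ideals $\e\sim R$ are the same as in the paper. The gap is in your final step. For $R'<1$, i.e.\ $R<(C\log(2AB))^{1/n}$, you take $R'=1$ and appeal to \cref{lem:monotonicity} to get $\sL_1(D,B)\ll\sL_1(A,B)$. That lemma requires $A\geq CD\log(2DB)$. But for, say, $R=2$ you have $A/D\in[2^n,4^n]$, which is bounded. Every admissible target $A/R'^n$ with $R'\geq 1$ is at most $A$, so no choice of $R'$ makes monotonicity applicable there. Hence the ranges $2\leq R<(C\log(2AB))^{1/n}$ are not covered by your argument.

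You created this obstacle yourself. The $R$ in the statement is an arbitrary real number in $[1,(2A)^{1/n}]$, not a dyadic parameter. For fixed $\e$ the $\d$-range is already exactly $\d\sim A/\cN(\e)^n$, so you should not re-cover it by intervals $\d\sim D$. After reciprocity, the contribution of each $\e$ is directly at most $\sL_1(A/\cN(\e)^n,B)\normt{\lambda}^2$. Splitting $\e$ dyadically as $\e\sim P$ and summing gives
\[
\ll P\max_{\e\sim P}\sL_1\big(A/\cN(\e)^n,B\big)\leq\max_{\e\sim P}\cN(\e)\,\sL_1\big(A/\cN(\e)^n,B\big).
\]
Then $R:=\cN(\e)$ satisfies $1\leq R<(2A)^{1/n}$, because $\cN(\e)^n\leq\cN(\a)<2A$. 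This is the paper's proof. It needs no monotonicity, only the $O(\log A)$ loss from the dyadic split of $\e$, and the endpoint issue disappears since $A/\cN(\e)^n>1/2$.

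Alternatively, you can keep your dyadic $D$ and take $R_0:=(A/D)^{1/n}\asymp R$. This lies in $[1,(2A)^{1/n}]$ once the intervals are aligned so that $1/2<D\leq A$.
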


\begin{proof}
    Any ideal $\a \in I(S)$ can be uniquely decomposed as $\a = \d\cdot \r^n$ for $\d, \r \in I(S)$ with $\d$ $n$-th power free. Covering the range of $\r$ with dyadic intervals $\r \sim R'$, we have
    \begin{equation*}
         \Sigma_2(A, B, \lambda) = \sum_{\substack{\a \in I(S) \\ \a \sim A}} \Big| \sumflat_{\b \sim B} \lambda_\b \chi_\b(\a) \Big|^2 \ll_\varepsilon A^\varepsilon \max_{\frac{1}{2}\leq R' \leq \frac{(2A)^{1/n}}{2}} \sum_{\substack{\r \in I(S) \\ \r \sim R'}} \sumflat_{\d \sim A / \cN(\r)^n} \Big| \sumflat_{\b \sim B} \lambda_\b \cdot \chi_\b(\r)^n \cdot \chi_\b(\d) \Big|^2.
    \end{equation*}
    We now separate into classes $[\d] = \cC \in R_\c$ and apply reciprocity to replace $\chi_{\b}(\d)$ with $\alpha([\b], \cC) \cdot \chi_\d(\b)$. Thus
    \begin{align*}
        \Sigma_2(A, B, \lambda) &\ll_\varepsilon A^\varepsilon \max_{\substack{\frac{1}{2}\leq R' \leq \frac{(2A)^{1/n}}{2} \\ \cC \in R_\c}} \sum_{\substack{\r \in I(S) \\ \r \sim R'}}  \sL_1\Big(\frac{A}{\cN(\r)^n}, B\Big) \cdot \sumflat_{\b \sim B} |\lambda_\b  \cdot \chi_\b(\r)^n \cdot \alpha([\b], \cC)|^2 \\
        & \ll A^\varepsilon \max_{1\leq R \leq (2A)^{1/n}} R\cdot \sL_1\Big(\frac{A}{R^n}, B\Big) \cdot \normt{\lambda}^2,
    \end{align*}
    so the result follows.
\end{proof}


\section{The recursive argument}

The proof of \cref{thm:main} uses the same iterative scheme as previous works, inserting our improved bounds. For any $\alpha > 1$, let $(E_\alpha)$ denote the statement
\begin{equation*}
    \sL_1(A, B) \ll_\varepsilon (AB)^\varepsilon \Big(A + B^\alpha + A\Big(\frac{B^2}{A}\Big)^{1/n} + B\Big(\frac{A^2}{B}\Big)^{1/n}\Big) \qquad \text{for every $\varepsilon > 0$ and $A, B \geq 1$.}
\end{equation*}
Observe from \cref{lem:symmetry} that $(E_\alpha)$ is equivalent to 
\begin{equation*}
    \sL_1(A, B) \ll_\varepsilon (AB)^\varepsilon \Big(A^\alpha + B + A\Big(\frac{B^2}{A}\Big)^{1/n} + B\Big(\frac{A^2}{B}\Big)^{1/n}\Big) \qquad \text{for every $\varepsilon > 0$ and $A, B \geq 1$.}
\end{equation*}

\begin{prop}[Iteration]\label{prop:iteration}
    Assume that $(E_\alpha)$ holds for some $\alpha > 1+\frac{1}{n}$. Then $(E_{2 - \frac{n-1}{n\alpha - 1}})$ also holds. 
\end{prop}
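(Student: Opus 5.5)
We will chain the lemmas of the previous sections into a single recursive inequality and feed in $(E_\alpha)$. Write $F_\alpha(A,B) := A + B^\alpha + A(B^2/A)^{1/n} + B(A^2/B)^{1/n}$ for the right side of $(E_\alpha)$. By \cref{lem:symmetry} we may assume $A \le B$. By \cref{lem:monotonicity} we may also inflate $A$: if $A$ is small compared to some threshold $A_0 \geq A$, we bound $\sL_1(A,B) \ll \sL_1(A_0 (\log)^{O(1)}, B)$. The target $F_{\alpha'}(A,B)$, with $\alpha' := 2 - \frac{n-1}{n\alpha-1}$, should be arranged so that the $A$-term is harmless at the inflated length. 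This is the standard Heath-Brown step. We will take $A_0 = B^{\alpha'}$ when $A < B^{\alpha'}$, and otherwise work with $A$ itself.

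Step one combines \cref{lem:K_1_K_2_inequality}, \cref{lem:GCD_sorting}, \cref{lem:Poisson_K_3} and \cref{lem:n-th_powers} to obtain
\begin{equation*}
\sL_1(A,B) \ll (AB)^{\varepsilon}\Big(A + \max \frac{A(B_1B_2)^{1/2}}{B}\prod_{i=1}^2 \big(R_i \sL_1(K_i/R_i^n, L_i)\big)^{1/2}\Big).
\end{equation*}
Here $K_i \le B^2/(AB_i)$, $L_i \le B/B_i$ and $R_i \le K_i^{1/n}$. We first replace $(A,B)$ by $(A/\Delta_1, B/\Delta_2)$ with $\Delta_1 \ll \Delta_2$. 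This only helps, since each term of the final bound is nonincreasing under that substitution up to the $A$-term. The factor $(B_1B_2)^{1/2}$ is split symmetrically, so by AM--GM it suffices to bound the single quantity
\begin{equation*}
\frac{A B_1}{B}\, R\, \sL_1\Big(\frac{K}{R^n}, L\Big), \qquad K \le \frac{B^2}{AB_1},\quad L\le \frac{B}{B_1},\quad R\le K^{1/n}.
\end{equation*}

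Step two inserts $(E_\alpha)$ for $\sL_1(K/R^n, L)$ and checks each of the four terms.
\begin{itemize}
\item The term $K/R^n$ gives $\frac{AB_1}{B}\cdot R\cdot\frac{K}{R^n} \le B$.
\item The term $(K/R^n)\cdot(L^2R^n/K)^{1/n}$ gives $\frac{AB_1}{B}\, K^{1-1/n}L^{2/n}$. With the extreme ranges this is $\le A^{1/n}B^{2-2/n}\cdot B_1^{-1+1/n-2/n+1}\cdots$; by the monotonicity in $B_1$ the worst case is $B_1 = 1$, giving $A(B^2/A)^{1-1/n}B^{2/n}/B = B(A^2/B)^{\cdot}$-type terms. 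These should reproduce exactly $A(B^2/A)^{1/n} + B(A^2/B)^{1/n}$ after using $A\le B$. The paper's heuristic says the worst case is $B_1=B_2=R=1$, where the bound is $\frac{A}{B}\,F(B^2/A, B)$.
\item The $L(\ldots)^{1/n}$ term is handled in the same way.
\item The critical term is $\frac{AB_1}{B}R\,(K/R^n)^{\alpha}$, using the equivalent form of $(E_\alpha)$ with $A^\alpha$ placed on the first argument when $K/R^n \ge L$. At $B_1=R=1$ this is $A^{1-\alpha}B^{2\alpha-1}$, which decreases in $A$.
\end{itemize}
One must instead use the $L^\alpha$ version when $L$ is the longer variable. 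That gives $\frac{A}{B}B^{\alpha} = AB^{\alpha-1}$, which increases in $A$. Hence we should apply the symmetric $(E_\alpha)$ choosing whichever form is better, or equivalently take the minimum.

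Step three balances the bounds. For $A \ge B^{\alpha'}$ the term $A$ dominates as desired. Otherwise we inflate $A$ to a suitable $A_0\in[A, B^{2}]$ and obtain $\min\big(A_0^{1-\alpha}B^{2\alpha-1},\ldots\big)$ together with the $n$-th power terms evaluated at $A_0$. We must then choose $A_0$ so that $A_0^{1-\alpha}B^{2\alpha-1}$ equals the $n$-th root term $A_0(B^2/A_0)^{1/n}$, namely $A_0^{1-\alpha-1+1/n} = B^{2/n-2\alpha+1}$. Solving this gives $\sL_1 \ll B^{\alpha'}$ with $\alpha' = 2-\frac{n-1}{n\alpha-1}$; a quick check of the exponent arithmetic confirms this formula. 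The inflation does not damage the $A(B^2/A)^{1/n}$ term, since that term increases in $A$ and is absorbed into the $B^{\alpha'}$ term. The hypothesis $\alpha>1+\frac1n$ ensures $A_0\ge B$ and that the inflated range is legitimate.

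The main obstacle is the bookkeeping in step two over the auxiliary parameters $B_1,B_2,R,K_i,L_i,\Delta_i$. One must verify that the worst case is indeed $B_1=B_2=R=1$, $K$ and $L$ maximal, so that each term is monotone in these parameters; the AM--GM observation that all terms of $F_\alpha$ have total exponent at least $1$ is what makes the factor $B_1$ cancel. I would handle this by checking each of the four monomials of $F_\alpha$ separately as a product of powers and verifying that the exponent of $B_1$, and the net exponent of $R$ (namely $1-n\cdot(\text{exponent of }A)$), are both nonpositive.
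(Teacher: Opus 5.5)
Your skeleton matches the paper's. You chain the four lemmas, reduce to $B_1=B_2=1$, and inflate $A$ via \cref{lem:monotonicity} to $A_0\asymp B^{2-\frac{n}{n\alpha-1}}$, where $A^{1-\alpha}B^{2\alpha-1}$ balances $A(B^2/A)^{1/n}$; your step three gets this right. The genuine gap is in step two, in how you handle the factor $R$ from \cref{lem:n-th_powers}.

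Every form of $(E_\alpha)$ contains a monomial free of the first argument: $L$ in the symmetric form, $L^\alpha$ in the original form. In $R\,\sL_1(K/R^n,L)$ that monomial has $R$-exponent $+1$. So your planned check that all four monomials have nonpositive net $R$-exponent fails, and the worst case is $R\asymp K^{1/n}$, not $R=1$. In particular, the $L^\alpha$ form you say one ``must'' use when $L$ is longer is useless. With $R\asymp K^{1/n}$ it gives up to $A^{1-1/n}B^{2/n+\alpha-1}$. Even its $R=1$ value $AB^{\alpha-1}$ exceeds $B^{\alpha'}$ at $A=A_0$, since $\alpha-1>\frac1n>\frac1{n\alpha-1}$.

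The fix is what the paper does: always insert the symmetric form $X^\alpha+Y+\cdots$ into \cref{lem:n-th_powers}. Then $R(X/R^n)^\alpha\le X^\alpha$ and $RY\ll X^{1/n}Y$. The two $n$-th root monomials are at most $X^{1-1/n}Y^{2/n}$ and $X^{2/n}Y^{1-1/n}$. These are $\le X^{1/n}Y$ if $X\le Y$, and $\le X^{1+1/n}\le X^\alpha$ if $X\ge Y$; this is where $\alpha>1+\frac1n$ enters. Hence $\sL_2(X,Y)\ll(XY)^\varepsilon(X^\alpha+X^{1/n}Y)$.

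After \cref{lem:Poisson_K_3} you get $\frac{AB_i}{B}K_i^{1/n}L_i\ll(AB)^\varepsilon A(B^2/A)^{1/n}B_i^{-1/n}$. So the term $A(B^2/A)^{1/n}$ comes from $R$ maximal in the linear term. It does not come, as you claim, from the $n$-th root monomials at $R=1$. Those give $A^{1/n}BB_1^{-1/n}$ and $A^{1-2/n}B^{3/n}B_1^{-1/n}$. For $A<B$ the first already exceeds both $n$-th root terms of $F_{\alpha'}(A,B)$, so ``after using $A\le B$'' goes the wrong way. The correct outcome is $\sL_1(A,B)\ll(AB)^\varepsilon(A+A^{1-\alpha}B^{2\alpha-1}+A(B^2/A)^{1/n})$ for \emph{all} $A,B$.

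Two smaller problems remain.
\begin{itemize}
\item Your reduction to $A\le B$ via \cref{lem:symmetry} is not legitimate. $(E_{\alpha'})$ is asymmetric (it contains $B^{\alpha'}$, not $A^{\alpha'}$), and applying symmetry in the range $A>B$ would produce $A^{\alpha'}$. That range must be handled by the recursive bound directly. Note that your own case ``$A\ge B^{\alpha'}$'' is empty under $A\le B$.
\item The split in your first paragraph should be at $B^{2-\frac{n}{n\alpha-1}}$, not $B^{\alpha'}$. Inflating to $A_0=B^{\alpha'}$ makes $A_0(B^2/A_0)^{1/n}=B^{\alpha'+(2-\alpha')/n}>B^{\alpha'}$. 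Above the correct threshold, $A^{1-\alpha}B^{2\alpha-1}\le A(B^2/A)^{1/n}$ holds directly with no inflation.
\end{itemize}
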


\begin{proof}
    Let $A, B \geq 1$ and let $\varepsilon > 0$, which may change from one line to the next. Assuming $(E_\alpha)$ for $\alpha > 1+\frac{1}{n}$, observe from \cref{lem:n-th_powers} that
    \begin{align*}
        \sL_2(A, B) & \ll_\varepsilon (AB)^\varepsilon \max_{1\leq R \leq (2A)^{1/n}} R \cdot \sL_1\Big(\frac{A}{R^n}, B\Big) \\
        & \ll_\varepsilon (AB)^\varepsilon \max_{1\leq R \leq (2A)^{1/n}} R \cdot \Big(\Big(\frac{A}{R^n}\Big)^\alpha + B + \frac{A}{R^n} \Big(\frac{B^2R^n}{A}\Big)^{1/n} + B\Big(\frac{A^2}{BR^{2n}}\Big)^{1/n}\Big) \\
        & \ll_\varepsilon (AB)^\varepsilon \Big(A^\alpha + A^{1/n}B + A \Big(\frac{B^2}{A}\Big)^{1/n} + B\Big(\frac{A^2}{B}\Big)^{1/n}\Big) \ll_\varepsilon (AB)^\varepsilon \Big(A^\alpha + A^{1/n}B \Big).
    \end{align*}
    
    Substituting this into \cref{lem:Poisson_K_3}, we obtain
    \begin{align*}
        & \sL_3(A, B)  \ll_\varepsilon A + (AB)^\varepsilon \max_{\substack{1\leq B_1, B_2 \leq B }} \max_{\substack{1 \leq K_i \leq (AB)^\varepsilon\frac{B^2}{A B_i } \\ 1 \leq L_i \leq \frac{B}{B_i}}} \frac{A (B_1B_2)^{1/2}}{B} \cdot \sL_2(K_1, L_1)^{1/2} \cdot \sL_2(K_2, L_2)^{1/2} \\
        & \ll_\varepsilon A + (AB)^\varepsilon \max_{\substack{1\leq B_1, B_2 \leq B }} \max_{\substack{1 \leq K_i \leq (AB)^\varepsilon\frac{B^2}{A B_i } \\ 1 \leq L_i \leq \frac{B}{B_i}}} \frac{A (B_1B_2)^{1/2}}{B} \cdot \Big(K_1^\alpha + K_1^{1/n}L_1 \Big)^{1/2} \cdot \Big(K_2^\alpha + K_2^{1/n}L_2 \Big)^{1/2}.
    \end{align*}
    The product inside the maxima is
    \begin{align*}
        & = \Big[\frac{A^2 B_1B_2}{B^2} \cdot \Big(K_1^\alpha + K_1^{1/n}L_1 \Big) \cdot \Big(K_2^\alpha + K_2^{1/n}L_2 \Big) \Big]^{1/2} \\
        & \ll_\varepsilon(AB)^\varepsilon \Big[\frac{A^2 B_1B_2}{B^2} \cdot \Big(\Big(\frac{B^2}{AB_1}\Big)^\alpha + \Big(\frac{B^2}{AB_1}\Big)^{1/n} \frac{B}{B_1} \Big) \cdot \Big(\Big(\frac{B^2}{AB_2}\Big)^\alpha + \Big(\frac{B^2}{AB_2}\Big)^{1/n} \frac{B}{B_2} \Big) \Big]^{1/2}.
    \end{align*}
    Note that since $\alpha > 1+\frac{1}{n}$, the exponents of $B_1$ and $B_2$ in each of the resulting terms are negative. Hence we may set $B_1 = B_2=1$ and obtain
    \begin{align*}
        \sL_3(A, B) & \ll_\varepsilon A + (AB)^\varepsilon \cdot \frac{A }{B}  \cdot \Big(\Big(\frac{B^2}{A}\Big)^\alpha + \Big(\frac{B^2}{A}\Big)^{1/n} B \Big) \\
        & \ll_\varepsilon (AB)^\varepsilon \Big(A +A^{1-\alpha} B^{2\alpha-1} + A \Big(\frac{B^2}{A}\Big)^{1/n} \Big).
    \end{align*}

    Inserting the inequality above into \cref{lem:K_1_K_2_inequality} and \cref{lem:GCD_sorting} leads, for some $1 \leq \Delta_1 \ll \Delta_2$, to
    \begin{align}
        \sL_1(A, B) \ll \sL_2(A, B) & \ll_\varepsilon (AB)^\varepsilon \cdot \sL_3\Big( \frac{A}{\Delta_1}, \frac{B}{\Delta_2}\Big) \nonumber \\
        & \ll_\varepsilon (AB)^\varepsilon \Big(\frac{A}{\Delta_1} +\frac{A^{1-\alpha} B^{2\alpha-1}}{(\Delta_2/\Delta_1)^{\alpha-1} \Delta_2^{\alpha} } + A \Big(\frac{B^2}{A}\Big)^{1/n} \frac{1}{\Delta_1^{1-1/n}\Delta_2^{2/n}} \Big) \nonumber \\
        & \ll_\varepsilon (AB)^\varepsilon \Big(A +A^{1-\alpha} B^{2\alpha-1} + A \Big(\frac{B^2}{A}\Big)^{1/n} \Big). \label{eq:recursive_bound}
    \end{align}

    If $A \geq B^{2-\frac{n}{n\alpha - 1}}$, then the second term in \eqref{eq:recursive_bound} is bounded by the third, so we obtain the desired bound. Otherwise, if $A \leq B^{2-\frac{n}{n\alpha - 1}}$, then we apply \cref{lem:monotonicity} followed by \eqref{eq:recursive_bound} to bound
    \begin{align*}
        \sL_1(A, B) & \ll \sL_1\big(CB^{2-(\alpha - \frac{1}{n})^{-1}} \log(2AB), B\big) \ll_\varepsilon (AB)^\varepsilon \Big(B^{2-\frac{n}{n\alpha - 1}} + B^{2-\frac{n-1}{n\alpha - 1}} + B^{2-\frac{n-1}{n\alpha - 1}} \Big) \\
        & \ll (AB)^\varepsilon B^{2-\frac{n-1}{n\alpha - 1}} \ll (AB)^\varepsilon \Big(A +B^{2-\frac{n-1}{n\alpha - 1}} + A \Big(\frac{B^2}{A}\Big)^{1/n} \Big). 
    \end{align*}
    This establishes $(E_{2 - \frac{n-1}{n\alpha - 1}})$, as desired.
\end{proof}

For completeness, let us establish $(E_\alpha)$ for some $\alpha > 1 + \frac{1}{n}$.

\begin{lemma}[Initial bound]\label{lem:E_2}
    The statement $(E_2)$ holds.
\end{lemma}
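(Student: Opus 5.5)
By \cref{lem:symmetry}, it is enough to show
\begin{equation*}
    \sL_1(A, B) \ll_\varepsilon (AB)^\varepsilon (A + B^2)
\end{equation*}
for all $A, B \geq 1$, since this is stronger than $(E_2)$. The plan is to run the chain of lemmas once, but with the trivial bound as input instead of an iterative hypothesis. The trivial bound is $\sL_1(A, B) \ll AB$, by Cauchy--Schwarz: the inner sum has at most $\ll B$ terms of absolute value at most $1$. The same argument gives $\sL_2(K, L) \ll KL$.

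First we pass from $\sL_1$ to $\sL_3$. By \cref{lem:K_1_K_2_inequality} and \cref{lem:GCD_sorting},
\begin{equation*}
    \sL_1(A, B) \ll_\varepsilon B^\varepsilon \sL_3\Big(\frac{A}{\Delta_1}, \frac{B}{\Delta_2}\Big)
\end{equation*}
for some $1 \leq \Delta_1 \ll \Delta_2$. It then suffices to prove $\sL_3(A, B) \ll_\varepsilon (AB)^\varepsilon (A + B^2)$. This bound is monotone in both arguments, so the shrinking by $\Delta_1, \Delta_2$ costs nothing.

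Next we apply \cref{lem:Poisson_K_3} and insert $\sL_2(K_i, L_i) \ll K_i L_i$, with $K_i \leq (AB)^\varepsilon \frac{B^2}{AB_i}$ and $L_i \leq \frac{B}{B_i}$. The main term becomes
\begin{equation*}
    \frac{A (B_1B_2)^{1/2}}{B} \prod_{i=1}^{2} \Big(\frac{B^3}{A B_i^2}\Big)^{1/2} = \frac{A}{B} \cdot \frac{B^3}{A} \cdot (B_1B_2)^{-1/2} \leq B^2,
\end{equation*}
up to $(AB)^{O(\varepsilon)}$. Adding the term $A$ from \cref{lem:Poisson_K_3} gives $\sL_3(A, B) \ll_\varepsilon (AB)^\varepsilon (A + B^2)$, as required.

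The only step needing care is checking that the exponents of $B_1, B_2$ come out nonpositive, so that the maximum sits at $B_1 = B_2 = 1$. The computation above confirms this. No use of \cref{lem:n-th_powers} or \cref{lem:monotonicity} is needed.
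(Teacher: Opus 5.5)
Your proof is correct and is essentially the paper's argument: insert the trivial bound $\sL_2(K,L) \ll KL$ into \cref{lem:Poisson_K_3} to get $\sL_3(A,B) \ll_\varepsilon (AB)^\varepsilon(A+B^2)$, then transfer this to $\sL_2$ and $\sL_1$ via \cref{lem:GCD_sorting} and \cref{lem:K_1_K_2_inequality}, using, as you note, that the bound is monotone in both arguments so the shrinking by $\Delta_1, \Delta_2$ costs nothing. The appeal to \cref{lem:symmetry} at the start is unnecessary, since the right-hand side of $(E_2)$ already contains $A + B^2$.
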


\begin{proof}
    Let $A, B \geq 1$ and $\varepsilon > 0$. By Cauchy--Schwarz, we have $\sL_2(A, B) \ll AB$. Inserting this into \cref{lem:Poisson_K_3} gives
    \begin{equation*}
        \sL_3(A, B) \ll_\varepsilon (AB)^\varepsilon (A + B^2). 
    \end{equation*}
    The same bound follows for $\sL_2(A, B)$ by \cref{lem:GCD_sorting}, and then for $\sL_1(A, B)$ by \cref{lem:K_1_K_2_inequality}.
\end{proof}

We are ready to prove our main theorem.

\begin{proof}[Proof of \cref{thm:main}]
    We start with $(E_2)$ from \cref{lem:E_2}, and iterate the bound from \cref{prop:iteration}. Since for $\alpha > 1 + \frac{1}{n}$ we have $2 - \frac{n-1}{n\alpha - 1}< \alpha$, note that $\alpha$ converges to the fixed point $1 + \frac{1}{n}$, hence we obtain the bound
    \begin{equation*}
        \sL_1(A, B) \ll_\varepsilon (AB)^\varepsilon \Big(A + B^{1+1/n} + A\Big(\frac{B^2}{A}\Big)^{1/n} + B\Big(\frac{A^2}{B}\Big)^{1/n}\Big).
    \end{equation*}
    
    If $A \geq B$, then the third term on the RHS dominates the second one and the latter can be removed. Otherwise, we apply the bound above to $\sL_1(B, A)$, which by \cref{lem:symmetry} is comparable to $\sL_1(A, B)$, and the second term $A^{1+1/n}$ is again dominated by the third. Either way, the desired bound (corresponding to $\alpha = 1$) for $\sL_1(A, B)$ is obtained. 
    
    This implies the same bound for $\Theta_n(A, B)$, after splitting both sums into dyadic ranges and applying Cauchy--Schwarz. Therefore \cref{thm:main} follows.
\end{proof}


\section{Cubic moments in the conductor aspect}

To prove \cref{thm:cubic_second_moment}, we start with a general large sieve in conductor aspect. It can probably be improved in certain ranges, but suffices for our purposes.

\begin{prop}[Bound for cubic sums in terms of conductor]\label{prop:cubic_conductor_bound}
    Assume that $n = 3$. Then for any $A, B, Q \geq 1$, denoting $M = \min(A, B)$ and $N = \max(A, B)$, we have
    \begin{equation}\label{eq:cubic_conductor_bound}
        \sup_{\normt{\lambda} = 1}\sum_{\substack{\a \in I(S) \\ \a \sim A}} \sum_{\substack{\b \in I(S) \\ \b \sim B}} \Big| \sumstar_{\substack{\q \sim Q}} \lambda_\q \chi_{\q}(\a \b^2) \Big|^2 \ll_{\varepsilon} (ABQ)^\varepsilon ( AB +  M^{2/3} N^{1/3} Q + M^{1/3} N^{2/3} Q^{2/3} ).
    \end{equation}
\end{prop}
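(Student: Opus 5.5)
We follow the sketch in the introduction: open the square, apply Poisson summation in the longer variable, and then reduce to \cref{thm:main}. First we reduce to $A \leq B$. Since $n=3$, we have $\chi_\q(\a\b^2) = \chi_\q(\a)\overline{\chi_\q(\b)}$ for $(\q,\a\b)=1$. Hence $|\sum_\q \lambda_\q \chi_\q(\a\b^2)| = |\sum_\q \overline{\lambda_\q}\chi_\q(\b\a^2)|$, and the left side of \eqref{eq:cubic_conductor_bound} is symmetric in $(A,B)$. So assume $A = M \leq B = N$.

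\textbf{Step 1: open the square and Poisson in $\b$.} Fix the class of $\q$ in $R_\c$ and insert the weight $W(\cN(\b)/B)$ by positivity. Open the square and sort by $\d=(\q_1,\q_2)$, exactly as in \cref{lem:GCD_sorting}, writing $\q_i=\d\q_i'$. Since the $\q_i$ are squarefree, the character $\b\mapsto \chi_{\q_1}\overline{\chi_{\q_2}}(\b^2) = \overline{\tchi_{\q_1',\q_2'}}(\b)$ (up to coprimality with $\d$, removed by M\"obius) is primitive of conductor $\q_1'\q_2'$ by \cref{lem:conductors}. Its root number splits completely, since the $\q_i'$ are squarefree, so $\b_i^\sharp=\cO$ in \cref{lem:root_number}. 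Applying \cref{lem:general_Poisson_formula} in $\b$ gives dual length $\asymp Q^2/(\Delta^2 B)$ with prefactor $B\Delta/Q$, where $\Delta = \cN(\d)$. After Poisson the $\b$-character is conjugated, so $\overline{\chi_{\q}(\b^2)}$ becomes $\chi_\q(\b^2)$ twice conjugated, i.e.\ $\chi_\q(\b)$ up to the cube relation. The split root numbers are absorbed into $\lambda$. Then we separate variables via Mellin, as in \cref{lem:Poisson_K_3}, and recombine into a square. The resulting bound is roughly
\begin{equation*}
\frac{B}{Q}\sum_{\a\sim A}\sum_{\b'\ll Q^2/B}\Big|\sumstar_{\q\sim Q}\lambda'_\q\chi_\q(\a\b')\Big|^2
\end{equation*}
plus a harmless diagonal contribution. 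The diagonal comes from the trivial character $\q_1'=\q_2'=\cO$ and contributes $\ll AB\normt{\lambda}^2$, which gives the first term.

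\textbf{Step 2: merge variables and remove cubes.} Combine $\c' = \a\b'$ into one variable of norm $\ll AQ^2/B$, at a divisor cost $(ABQ)^\varepsilon$. Write $\c'=\g\e^3$ with $\g$ cube-free and $\e\sim E$. Bounding the $\e$ sum trivially, as in \cref{lem:n-th_powers}, and applying reciprocity to swap $\chi_\q(\g)$ into $\chi_\g(\q)$ gives the bound
\begin{equation*}
\frac{B}{Q}\sup_{E} E\,\Theta_3\Big(\frac{AQ^2}{BE^3},Q\Big).
\end{equation*}
Inserting \cref{thm:main}, with first argument $X = AQ^2/(BE^3)$, produces terms $X + Q + X^{2/3}Q^{2/3} + X^{2/3}Q^{1/3}\cdot$(the $A^{2/n}B^{1-1/n}$ term). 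Multiplying by $BE/Q$ and maximizing over $E$ (usually at $E=1$, except for the $Q$ term where $E \le X^{1/3}$) should give the terms $M^{2/3}N^{1/3}Q$ and $M^{1/3}N^{2/3}Q^{2/3}$.

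\textbf{Main obstacle.} The delicate step is the bookkeeping of the dual length when $Q^2/B<1$ or $\Delta$ is large. In that range, Poisson produces essentially only the zero frequency and the bound should be $AB$. I would also need to check that every $\Delta$-dependence is favourable, as in \eqref{eq:recursive_bound}. A further difficulty is that the exponent tally at the end might not match exactly: a Poisson-only route gives $A Q^{2}\cdot$ stuff. If a term exceeds the claimed bound, I would first try the alternative of not applying Poisson when $Q^2 \le AB$, using instead \cref{thm:main} directly on $\a\b^2$ after cube removal, and then take the minimum of the two bounds.
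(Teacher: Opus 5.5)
Your proposal follows the paper's proof essentially step for step: reduce to $A\le B$ by symmetry, open the square and sort by $\d=(\q_1,\q_2)$, apply Poisson in the longer variable $\b$ using the split root numbers of squarefree moduli, merge $\a$ with the dual variable, then remove cubes and invoke \cref{thm:main}. The paper packages the last two steps as $\sL_2(X,Y)\ll(XY)^\varepsilon(X+X^{1/3}Y+(XY)^{2/3})$.

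The tally you were unsure about does close, so no fallback is needed. For $n=3$ both cross terms of \cref{thm:main} equal $(XQ)^{2/3}$, not $X^{2/3}Q^{1/3}$. With your cube-part size $E$ satisfying $E^3\ll AQ^2/B$, the three contributions are $AQ/E^2$, $BE$ and $A^{2/3}B^{1/3}Q/E$. These give $AQ+A^{1/3}B^{2/3}Q^{2/3}+A^{2/3}B^{1/3}Q$, and $AQ\le A^{2/3}B^{1/3}Q$ since $A\le B$. The dependence on $\cN(\d)$ and on the M\"obius variable $\cN(\r)$ is indeed favourable, as the paper checks explicitly.
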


\begin{proof}
    Since $n=3$, we have $\chi_\q(\a\b^2) = \chi_\q(\a) \overline{\chi_\q(\b)}$. Hence both sides of \eqref{eq:cubic_conductor_bound} are symmetric in $A$ and $B$, so we may assume without loss of generality that $A \leq B$. 

    Inserting the smooth non-negative weight $W(\cN(\b)/B)$, opening the square, and sorting by GCD as in \eqref{eq:K_3_intermediate}, the sum over $\b$ on the LHS of \eqref{eq:cubic_conductor_bound} is
    \begin{equation*}
        \ll \max_{\cC \in R_\c} \sumstar_{\d} \Bigg|  \sumstar_{\substack{\q_1, \q_2 \sim Q/\cN(\d) \\ (\q_1, \q_2) = (\q_1 \q_2, \d) =1 \\ [\q_1] = [\q_2] = \cC [\d]^{-1}}} \lambda_{\q_1 \d} \overline{\lambda_{\q_2 \d}} \cdot \chi_{\q_1\d}(\a) \overline{\chi_{\q_2\d}(\a)} \sum_{\substack{\b \in I(S)}} W\Big(\frac{\cN(\b)}{B}\Big) \overline{\chi_{\q_1\d}(\b)} \chi_{\q_2\d}(\b) \Bigg| .
    \end{equation*}
    Note, as in the paragraphs following \eqref{eq:K_3_intermediate}, that $\overline{\chi_{\q_1\d}(\b)} \chi_{\q_2\d}(\b) = \overline{\tchi_{\q_1, \q_2}(\b)} \bbone_{(\b, \d)=1}$. Removing the condition $(\b, \c\d)=1$ by M\"obius inversion and recalling the decomposition $\tchi_{\q_1, \q_2} = \chi^{(\q_1)} \cdot \overline{\chi^{(\q_2)}}$ of \eqref{eq:_primitive_chi_decomposition}, the display above is
    \begin{equation}\label{eq:cubic_interm}
        \ll \sumstar_{(\d, \a)=1} \max_{\cC \in R_\c} \Bigg| \sum_{\r \mid \c\d} \mu(\r)  \sumstar_{\substack{\q_1, \q_2 \sim Q/\cN(\d) \\ (\q_1, \q_2) = 1 \\ [\q_1] = [\q_2] = \cC}} \lambda^\dagger_{\q_1} \overline{\lambda^\dagger_{\q_2}} \cdot \chi_{\q_1}(\a) \overline{\chi_{\q_2}(\a)}\sum_{\substack{\b \subseteq \cO}} W\Big(\frac{\cN(\b)}{B /\cN(\r)}\Big)  \overline{\tchi_{\q_1, \q_2}(\b)} \Bigg|
    \end{equation}
    for
    \begin{equation*}
        \lambda_\q^\dagger = \lambda_\q^\dagger(\d, \r) := \lambda_{\q\d} \cdot \overline{\chi^{(\q)}(\r)} \cdot \bbone_{(\q, \d)=1}.
    \end{equation*}

    We split the summation over $\d$ into dyadic ranges $\d \sim D$, where $1\leq D < 2Q$ runs over powers of $2$. The contribution of $Q/2 \leq D < 2Q$ to \eqref{eq:cubic_interm} can be bounded trivially by $\ll \sum_{\d \sim D}\sum_{\cN(\q) \ll 1} \sum_{\r \mid \c\d} |\lambda_{\q\d}|^2 \cdot \frac{B}{\cN(\r)} \ll_\varepsilon Q^\varepsilon B$, leading to the term $AB$ in \eqref{eq:cubic_conductor_bound} once we sum over $\a$.

    For the remaining ranges $1 \leq D < Q/2$, note that $\overline{\tchi_{\q_1, \q_2}}$ is a non-trivial primitive ray class character of conductor $\q_1\q_2$. Thus we may apply the Poisson summation formula of \cref{lem:general_Poisson_formula}, combined with \cref{lem:root_number} (whose result is simplified since $\q_1$ and $\q_2$ are squarefree) to \eqref{eq:cubic_interm}. Denoting $\widecheck{\lambda}_\q := \lambda^\dagger_\q \cdot \overline{\beta_\q}$, we obtain
    \begin{equation*}
        \ll \sumstar_{\substack{\d\sim D }} \sum_{\r \mid \c\d} \max_{\cC \in R_\c} \Bigg| \sumstar_{\substack{\q_1, \q_2 \sim Q/\cN(\d) \\ (\q_1, \q_2) = 1 \\ [\q_1] = [\q_2] = \cC}} \widecheck{\lambda}_{\q_1} \overline{\widecheck{\lambda}_{\q_2}} \cdot \chi_{\q_1}(\a) \overline{\chi_{\q_2}(\a)}\cdot  \frac{B}{ \sqrt{\cN(\q_1 \q_2 \r^2)}} \sum_{\b \subseteq \cO} \dot{W}\Big(\frac{\cN(\b)}{\cN(\q_1 \q_2 \r) / B}\Big) \tchi_{\q_1, \q_2}(\b) \Bigg|.
    \end{equation*}
    
    Since $\tchi_{\q_1, \q_2}$ is non-trivial, note that $\b = (0)$ does not contribute, so we may (uniquely) decompose $\b = \s\e$ for $\s \mid \c^\infty$ and $\e \in I(S)$. Then $\tchi_{\q_1, \q_2}(\b) = \chi^{(\q_1)}(\s)\overline{\chi^{(\q_2)}(\s)} \cdot \chi_{\q_1}(\e)\overline{\chi_{\q_2}(\e)}$, so the display above is
    \begin{equation*}
        \ll \sumstar_{\substack{\d\sim D }} \sum_{\r \mid \c\d} \frac{B}{\cN(\r)} \sum_{\s \mid \c^\infty} \max_{\cC \in R_\c} \sum_{\e \in I(S)} \Bigg| \sumstar_{\substack{\q_1, \q_2 \sim Q/\cN(\d) \\ (\q_1, \q_2) = 1 \\ [\q_1] = [\q_2] = \cC}} \frac{\lambda'_{\q_1} \overline{\lambda'_{\q_2}}}{\cN(\q_1 \q_2)^{1/2}} \cdot \chi_{\q_1}(\a\e) \overline{\chi_{\q_2}(\a\e)} \cdot \dot{W}\Big(\frac{\cN(\s\e)}{\cN(\q_1 \q_2 \r) / B}\Big)  \Bigg|,
    \end{equation*}
    for $\lambda'_\q := \widecheck{\lambda}_\q \cdot \chi^{(\q)}(\s)$. Split into dyadic ranges $\e \sim E$, where $E\geq 1$ runs over powers of $2$. Denote $Z := (ABQ)^\varepsilon$, where $\varepsilon > 0$. Observe by the decay of $\dot{W}$ in \eqref{eq:W_dot_bound} that the values $E > Z Q^2 \cN(\r) / B D^2 \cN(\s)$ contribute an error $\ll_\varepsilon (ABQ)^{-100}$.

     To separate variables, insert the definition \eqref{eq:W_dot_def} of $\dot{W}$ and shift the line of integration to $\Re(s) = \varepsilon$. The rapid decay of the Mellin transform $\widehat{W}$ allows us to truncate to $|\Im(s)| \leq Z$, once again at the cost of an overall error $\ll_\varepsilon (ABQ)^{-100}$. Thus apart from this error term, the sum over $\e$ in this range is
     \begin{align*}
         &\ll_\varepsilon Z^{O(1)} \int_{-Z}^Z \sum_{\substack{\e \in I(S) \\ \e \sim E}} \Bigg| \sumstar_{\substack{\q_1, \q_2 \sim Q/\cN(\d) \\ (\q_1, \q_2) = 1 \\ [\q_1] = [\q_2] = \cC}} \frac{\lambda'_{\q_1} \overline{\lambda'_{\q_2}}}{\cN(\q_1 \q_2)^{1/2-\varepsilon + it}} \cdot \chi_{\q_1}(\a\e) \overline{\chi_{\q_2}(\a\e)} \Bigg| \, |dt| \\
         & \ll_\varepsilon Z^{O(1)} \int_{-Z}^Z \sumstar_\g \frac{1}{\cN(\g)} \sum_{\substack{\e \in I(S) \\ \e \sim E}} \Bigg| \sumstar_{\substack{\f_1, \f_2 \sim Q/\cN(\d\g) \\ [\f_1] = [\f_2] = \cC [\g]^{-1}}} \frac{\lambda'_{\f_1\g} \overline{\lambda'_{\f_2\g}}}{\cN(\f_1 \f_2)^{1/2-\varepsilon + it}} \cdot \chi_{\f_1}(\a\e) \overline{\chi_{\f_2}(\a\e)} \Bigg| \, |dt|,
     \end{align*}
     where the last step is by M\"obius inversion and we dropped the condition $(\g, \a\e)=1$ by positivity. 
     
     At this point we re-introduce the sum over $\a \in I(S)$. Note that the coefficients $\lambda'_\q$ do not depend on $\a$ or $\e$ (but only on $\q, \d, \r, \s$). So we may combine these variables into $\u := \a\e \in I(S)$, which satisfies $AE \leq \cN(\u) < 4AE$, and use a divisor bound. Furthermore, we have separated the variables $\f_1$ and $\f_2$, so we may apply Cauchy--Schwarz to the sum over $\u$ and obtain (two copies of) the kernel $\sL_2$. More precisely, we conclude that the contribution of these localized ranges to \eqref{eq:cubic_conductor_bound} is
    \begin{equation*}
        \ll_\varepsilon Z^{O(1)} B \sumstar_{\d \sim D} \sum_{\r \mid \c\d} \sum_{\substack{\s \mid \c^\infty \\ \cN(\s) \ll ZQ^2}}\sumstar_{\g} \Big[ \sL_2\Big( AE, \frac{Q}{\cN(\d\g)} \Big) + \sL_2\Big( 2AE, \frac{Q}{\cN(\d\g)} \Big) \Big] \sumstar_{\substack{\f \sim Q/\cN(\d\g)}} \frac{|\lambda'_{\g\f}|^2}{\cN(\r\f\g)} .
     \end{equation*}

     Further localize $\r \sim R$ into dyadic intervals. Recall that $E \leq \frac{Z Q^2 \cN(\r)}{B D^2 \cN(\s)} \ll \frac{ZQ^2 R}{BD^2}$, where $1 \ll R \ll D \ll Q$. Note from \cref{lem:n-th_powers} and \cref{thm:main} that we have
     \begin{equation}\label{eq:L_2_consequence}
        \sL_2(X, Y) \ll_\varepsilon (XY)^\varepsilon \big(X + X^{1/3}Y + (XY)^{2/3}\big)
     \end{equation} 
     for any $X, Y > 0$. Inserting this bound into the display above, we obtain
     \begin{align*}
         & \ll_\varepsilon Z^{O(1)} B \sumstar_{\d \sim D} \sum_{\substack{\r \sim R \\ \r \mid \c\d}} \Big[ AE + (AE)^{1/3} \frac{Q}{D} + \Big(\frac{AEQ}{D}\Big)^{2/3} \Big] \sum_{\substack{\frakh \sim Q/\cN(\d)}} \frac{|\widecheck{\lambda}_{\frakh}|^2}{\cN(\r\frakh)} \\
         & \ll_\varepsilon Z^{O(1)} \frac{BD}{QR} \Big[ \frac{AQ^2R}{BD^2} + \Big(\frac{AQ^2R}{BD^2}\Big)^{1/3}\frac{Q}{D} + \Big(\frac{AQ^3R}{BD^3}\Big)^{2/3} \Big] \sumstar_{\d \sim D} \sum_{\substack{\frakh \sim Q/\cN(\d)}} |\lambda_{\frakh\d}|^2 \\
         & \ll_\varepsilon Z^{O(1)} \Big[ \frac{AQ}{D} + A^{1/3}\Big(\frac{BQ}{DR}\Big)^{2/3} + A^{2/3}\frac{Q}{D}\Big(\frac{B}{R}\Big)^{1/3} \Big] \\
         &\ll_\varepsilon Z^{O(1)} \big[ AQ + A^{1/3}B^{2/3} Q^{2/3} + A^{2/3}B^{1/3} Q \big] \\
         & \ll_\varepsilon Z^{O(1)} \big[A^{1/3}B^{2/3} Q^{2/3} + A^{2/3}B^{1/3} Q \big],
        \end{align*}
        since we assumed that $A\leq B$. This gives the desired result, after incorporating the previous term $\ll_\varepsilon ZAB$ corresponding to the diagonal range $Q/2 \leq D < 2Q$.
\end{proof}

\begin{remark}
    The proof of \cref{prop:cubic_conductor_bound} does not require the full force of \cref{thm:main}, since \eqref{eq:L_2_consequence} can be replaced with the same bound for the kernel where the inner sum is over squarefree ideals, rather than cube-free. Such an estimate follows directly from Heath-Brown's version of the cubic large sieve, as in \cite{DDDS24}*{Proposition 3.13}. However, our \cref{thm:main} would also give a bound similar to \cref{prop:cubic_conductor_bound} when $\q$ is only required to be cube-free. This would lead to the previous complications with root numbers, so we refrain from this extra work since the squarefree version suffices for our purposes.
\end{remark}

The power of \cref{prop:cubic_conductor_bound} comes from the fact that if $Q \leq (AB)^{1/2+\varepsilon}$, then it gives the fully orthogonal bound $\ll_\varepsilon (AB)^{1+2\varepsilon}$. This range of $Q$ is the relevant one for the approximate functional equation of $L(\tfrac{1}{2}, \chi_{\a\b^2})$, so we obtain a tight bound for the corresponding second moment.

\begin{proof}[Proof of \cref{thm:cubic_second_moment}]
    We fix the unitary Hecke character $\psi$, and may assume that $AB$ is sufficiently large in terms of $\psi$ so that $\psi \chi_{\a\b^2}$ has non-trivial conductor. Applying the standard approximate functional equation and Mellin inversion as in \cite{DDDS24}*{(4.10)} and \cite{DIPP26}*{(55)}, for $\a, \b \in I(S)$ with $\mu^2(\a\b)=1$ we have, denoting $Z = (AB)^\varepsilon$, that
    \begin{equation*}
        \sumstar_{\a \sim A} \sumstar_{\substack{\b \sim B \\ (\a, \b)=1}} |L(\tfrac{1}{2}, \psi\chi_{\a\b^2})|^2 \ll_\varepsilon 1 + Z^{O(1)} \int_{-Z}^Z \sumstar_{\a \sim A} \sumstar_{\substack{\b \sim B \\ (\a, \b)=1}} \Big| \sum_{\substack{0 \neq \frakn \subseteq \cO \\ \cN(\frakn) \leq Z(AB)^{1/2}}} \frac{\psi(\frakn) \chi_{\a\b^2}(\frakn)}{\cN(\frakn)^{1/2 + \varepsilon + it}} \Big|^2 \, |dt|.
    \end{equation*}
    Decompose $\frakn = \s\q\r^2$ for $\s \mid \c^\infty$ and $\q, \r \in I(S)$ with $\mu^2(\q)=1$. By Cauchy--Schwarz and convergence of $\sum_{\s \mid \c^\infty} \sum_{\r \in I(S)} \cN(\s\r^2)^{-1/2-\varepsilon}$, the display above is
    \begin{equation}\label{eq:approx_functional_eq}
        \ll_\varepsilon 1 + Z^{O(1)} \int_{-Z}^Z \sum_{\s \mid \c^\infty} \sum_{\r \in I(S)} \cN(\s\r^2)^{-1/2-\varepsilon} \sumstar_{\a \sim A} \sumstar_{\substack{\b \sim B \\ (\a, \b)=1}} \Big| \sumstar_{\substack{\cN(\s\r^2\q) \leq Z (AB)^{1/2}}} \frac{\psi(\q) \chi_{\a\b^2}(\q)}{\cN(\q)^{1/2 + \varepsilon + it}} \Big|^2 \, |dt|.
    \end{equation}
    Splitting the sum over $\q$ into dyadic intervals and applying \cref{prop:cubic_conductor_bound} with $\lambda_\q = \psi(\q) \cN(\q)^{-(1/2+\varepsilon+it)}$, the sums over $\a$ and $\b$ are $\ll_\varepsilon Z^{O(1)} AB$. We obtain the desired bound
    \begin{equation}\label{eq:cubic_first_res}
        \sumstar_{\a \sim A} \sumstar_{\substack{\b \sim B \\ (\a, \b)=1}} |L(\tfrac{1}{2}, \psi\chi_{\a\b^2})|^2 \ll_\varepsilon 1 + Z^{O(1)} AB \sum_{\s \mid \c^\infty} \sum_{\r \in I(S)} \cN(\s\r^2)^{-1/2-\varepsilon} \ll_\varepsilon Z^{O(1)}AB.
    \end{equation}

    Finally, if $\g \in I(S)$ is cube-free, then writing $\g = \a\b^2$ for $\a, \b \in I(S)$ with $\mu^2(\a\b)=1$, we have $\cond(\chi_\g) = \c_\g \a\b$ for some $\c_\g \mid \c$. Thus 
    \begin{equation*}
        \sumflat_{\substack{\cN(\cond(\chi_\g)) \leq X}} |L(\tfrac{1}{2}, \chi_\g)|^2 \leq \sumstar_\a \sumstar_{\substack{\cN(\b) \leq X/\cN(\a)}} \mu^2(\a\b) \cdot |L(\tfrac{1}{2}, \chi_{\a\b^2})|^2 \ll_\varepsilon X^{1+\varepsilon}
    \end{equation*}
    due to \eqref{eq:cubic_first_res}, after localizing the sums over $\a$ and $\b$ in dyadic ranges and taking $\psi=1$. 
\end{proof}


\section{Consequences for moments and zero density}

\begin{proof}[Proof of \cref{thm:general_second_moment}]
    As in the proof of \cite{DIPP26}*{Theorem 10.5.2}, it suffices to show that $Z_1^S(\tfrac{1}{2}, \tfrac{1}{2}, w)$ has meromorphic continuation to $\sigma := \Re(w) > \sigma_n$, with at most double poles at $w = 1$ and $w = \tfrac{1}{2}+\tfrac{1}{n}$, and satisfies a polynomial bound in $w$ of the form
    \begin{equation*}
        (w-1)^2 (w-\tfrac{1}{2} - \tfrac{1}{n})^2 Z_1^S(\tfrac{1}{2}, \tfrac{1}{2}, w) \ll_\sigma (1+|w|)^{A_n}
    \end{equation*}
    in that region, for some fixed $A_n \geq 0$.
    
    Combining equation (18), Propositions 6.3.2 and 7.2.2, and the bounds in \S 8.1.1 of \cite{DIPP26}, it suffices as in \cite{DIPP26}*{Proposition 8.2.2} to prove convergence of
    \begin{equation}\label{eq:S_sum_def}
        S_n(\sigma) := \max_{\psi \in \widehat{R_\c}}\sumstar_{\h} \cN(\h)^{(n-1)(1-\sigma) -2\sigma} \sum_{\f \mid \h^{n-1}} \sumflat_{(\a, \f)=1} \frac{|L(\tfrac{1}{2}, \psi\chi_{\a\f})|^{2}}{\cN(\a)^{1+\varepsilon}}
    \end{equation}
    for $\sigma > \sigma_n$ and some $\varepsilon > 0$. Since $\sigma_n > \frac{n}{n+1}$, we can exchange the order of the sums over $\h$ and $\f$ to obtain
    \begin{equation*}
        S_n(\sigma) \ll_\varepsilon \max_{\psi \in \widehat{R_\c}} \sumflat_{\f} \sumflat_{(\a, \f)=1} \frac{|L(\tfrac{1}{2}, \psi\chi_{\a\f})|^{2}}{\cN(\a)^{1+\varepsilon} \cN(\f^\star)^{\alpha}} 
    \end{equation*}
    for $\alpha := (n+1)\sigma - (n-1) > 1$. Writing $\u := \a\f$, which is $n$-th power free, and factoring $\u = \q_1\q_2^2 \cdots \q_{n-1}^{n-1}$ for $\q_i\in I(S)$ with $\mu^2(\q_1\q_2\cdots \q_{n-1})=1$, the divisor bound gives
    \begin{equation*}
        S_n(\sigma) \ll_\varepsilon \max_{\psi \in \widehat{R_\c}} \sumstar_{\q_1}\sumstar_{\q_2} \cdots \sumstar_{\q_{n-1}} \mu^2(\q_1\q_2\cdots\q_{n-1}) \frac{|L(\tfrac{1}{2}, \psi\chi_{\prod_{i=1}^{n-1} \q_i^i})|^{2}}{\cN(\q_1)^{1+\varepsilon} \cN(\q_2\cdots \q_{n-1})^{\alpha}}.
    \end{equation*}

    To obtain convergence of the sum above when $\sigma > \sigma_n$, it suffices to show that
    \begin{equation*}
        M_n(\Vec{Q}) := \max_{\psi \in \widehat{R_\c}} \sumstar_{\q_1\sim Q_1} \cdots \sumstar_{\q_{n-1} \sim Q_{n-1}} \mu^2(\q_1\cdots\q_{n-1}) \cdot \big|L\big(\tfrac{1}{2}, \psi\chi_{\prod_{i=1}^{n-1} \q_i^i}\big)\big|^{2}
    \end{equation*}
    satisfies, for every $\varepsilon > 0$, the bound
    \begin{equation*}
        M_n(\Vec{Q}) \ll_\varepsilon Q_1^{1+\varepsilon} (Q_2 \cdots Q_{n-1})^{\alpha_n+\varepsilon} \qquad \text{ for  } \qquad \alpha_n := \begin{cases}
            \frac{17}{13} &  \text{ if } n=4, \\
            \frac{3n-7}{2(n-2)} &  \text{ if } n \geq 5.
        \end{cases}
    \end{equation*}
    Denote $Q := \prod_{i=1}^{n-1} Q_i$ and $\p_j := \q_j^{-j} \prod_{i=1}^{n-1} \q_i^i$ for any $1\leq j\leq n-1$. Applying the approximate functional equation as in \eqref{eq:approx_functional_eq}, and trivially bounding the sums over $\q_i$ for $i\neq j$, we have
    \begin{equation*}
        M_n(\Vec{Q}) \ll_\varepsilon Q^\varepsilon \max_{\psi \in \widehat{R_\c}} \sup_{1\leq B\leq Q^{1/2+\varepsilon}} \sup_{t \in \R} \sup_{\substack{\p_j \in I(S)}} \frac{Q}{Q_j} \sumstar_{\substack{\q_j \sim Q_{j} \\ (\q_j, \p_j)=1}} \Big| \sumstar_{\cN(\b) \leq B} \frac{\psi(\b) \chi_{\p_j}(\b) \chi^j_{\q_j}(\b)}{\cN(\b)^{1/2+\varepsilon+it}} \Big|^2.
    \end{equation*}
    Note that $\{\chi_\a^j\}_{\a\in I(S)}$ is a family of $m$-th order Hecke characters for $m = \frac{n}{(n, j)}$, so\footnote{We do not need our stronger result in \cref{thm:main} when $n\geq 5$.}
    \begin{equation*}
        M_n(\Vec{Q}) \ll_\varepsilon \frac{Q^{1+\varepsilon}}{Q_j} \sup_{1\leq B\leq Q^{1/2+\varepsilon}} \Xi_m(Q_j, B) \ll_\varepsilon Q^{1+\varepsilon} \Big( 1 + \frac{Q^{1/2}}{Q_j} + \Big(\frac{Q}{Q_j}\Big)^{1/3} \Big),
    \end{equation*}
    by \eqref{eq:BGL_bound} for $m \geq 3$ and \eqref{eq:quadratic_bound} for $m=2$. 
    
    Write $R := \prod_{i=2}^{n-1} Q_i$ and choose $j$ with $Q_j = \max_{i} Q_i$. Note that $Q_1 \leq Q_j$, so $\frac{Q}{Q_{j}}\leq \frac{Q}{Q_1} = R$. Also $R \leq Q_j^{n-2}$, hence $\frac{Q}{Q_j^2} \leq \frac{R}{Q_j} \leq R^{\frac{n-3}{n-2}}$. For $n\geq 5$, we conclude that
    \begin{align*}
        M_n(\Vec{Q}) \ll_\varepsilon Q^{1+\varepsilon} \Big( 1 + R^{\frac{n-3}{2(n-2)}} + R^{1/3}  \Big) \ll_\varepsilon Q_1^{1+\varepsilon} R^{\frac{3n-7}{2(n-2)} + \varepsilon},
    \end{align*}
    as claimed. 
    
    If instead $n = 4$, then we use the sharper bound
    \begin{align*}
        M_4(\Vec{Q}) &\ll_\varepsilon Q^{1+\varepsilon} \min\Big(1 +\Big(\frac{Q_1Q_3}{Q_2}\Big)^{1/2}, 1 +\Big(\frac{Q_2Q_3}{Q_1}\Big)^{1/2} + (Q_2Q_3)^{1/4} + \frac{(Q_2Q_3)^{3/8}}{Q_1^{1/8}} \Big) \\
        & = Q^{1+\varepsilon} \min\Big(1 +\frac{(Q_1 R)^{1/2}}{Q_2}, 1 +\Big(\frac{R}{Q_1}\Big)^{1/2} + R^{1/4} + \frac{R^{3/8}}{Q_1^{1/8}} \Big)
    \end{align*}
    obtained using \eqref{eq:quadratic_bound} for $j=2$ and \cref{thm:main} for $j=1$, where we now make no assumptions on the size of the corresponding $Q_j$. 
    
    Since we aim for an overall bound $M_4(\Vec{Q})\ll_\varepsilon Q^{1+\varepsilon} R^{\alpha_4-1}$, note (using $\alpha_4 = \frac{17}{13} > \frac{5}{4}$) that the first bound is enough unless
    \begin{equation}\label{eq:first_ineq}
        \frac{Q_2}{Q_1^{1/2}} \leq R^{3/2-\alpha_4},
    \end{equation}
    and the second bound is enough unless
    \begin{equation}\label{eq:second_ineq}
        Q_1 \leq R^{\max(3-2\alpha_4, 11-8\alpha_4)} = R^{11-8\alpha_4}.
    \end{equation}
    If both \eqref{eq:first_ineq} and \eqref{eq:second_ineq} hold, then $Q_2 \leq R^{7-5\alpha_4}$ and therefore $Q_3 = \frac{R}{Q_2} \geq R^{5\alpha_4 - 6}$. But then we may take $j=3$ at the outset and use \cref{thm:main} to obtain
    \begin{align*}
        M_4(\Vec{Q}) &\ll_\varepsilon Q^{1+\varepsilon} \Big( 1 +\Big(\frac{Q_1Q_2}{Q_3}\Big)^{1/2} + (Q_1Q_2)^{1/4} + \frac{(Q_1Q_2)^{3/8}}{Q_3^{1/8}} \Big) \\
        &\leq Q^{1+\varepsilon} \Big( 1 + \big(R^{24-18\alpha_4}\big)^{1/2} + \big(R^{18-13\alpha_4}\big)^{1/4} + \big(R^{60-44\alpha_4}\big)^{1/8} \Big).
    \end{align*}
    
    In the first three terms inside the parenthesis above, the exponent of $R$ is $< \alpha_4-1$ for $\alpha_4 = \frac{17}{13}$. The exponent of the fourth term equals $\alpha_4-1$ with this choice (so the fourth term is the limiting one), finishing the proof.
\end{proof}

\begin{proof}[Proof of \cref{thm:cubic_secondary_term}]
    As in the general case \eqref{eq:S_sum_def}, it suffices to prove convergence of
    \begin{equation*}
        S_3(\sigma) := \max_{\psi \in \widehat{R_\c}}\sumstar_{\h} \cN(\h)^{2-4\sigma} \sum_{\f \mid \h^2} \sumflat_{(\a, \f)=1} \frac{|L(\tfrac{1}{2}, \psi\chi_{\a\f})|^2}{\cN(\a)^{1+\varepsilon}}
    \end{equation*}
    for $\sigma > \frac{3}{4}$ and some $\varepsilon > 0$. Write $\b := \a\f$, which is cube free. Note that $\b^\star = \a^\star \f^\star \mid \a\h$, hence $\cN(\cond(\chi_\b)) \ll \cN(\a\h)$. Writing $\h = \f^\star \g$, note that for $\sigma \geq \frac{3}{4}+\varepsilon$ we have
    \begin{align*}
        S_3(\sigma) &= \max_{\psi \in \widehat{R_\c}}\sumflat_{\f} \cN(\f^\star)^{2-4\sigma}  \Big( \sumstar_{(\g, \f)=1} \cN(\g)^{2-4\sigma}\Big)\sumflat_{(\a, \f)=1} \frac{|L(\tfrac{1}{2}, \psi\chi_{\a\f})|^2}{\cN(\a)^{1+\varepsilon}} \\
        & \ll_\varepsilon \max_{\psi \in \widehat{R_\c}}\sumflat_{\f} \sumflat_{(\a, \f)=1} \frac{|L(\tfrac{1}{2}, \psi\chi_{\a\f})|^2}{\cN(\a\f^\star)^{1+\varepsilon}} \ll_\varepsilon \max_{\psi \in \widehat{R_\c}}\sumflat_{\g} \frac{|L(\tfrac{1}{2}, \psi\chi_{\g})|^2}{\cN(\g^\star)^{1+\frac{\varepsilon}{2}}}.
    \end{align*}
    The sum over $\g$ is $\ll_\varepsilon 1$ due to \cref{thm:cubic_second_moment}, so this finishes the proof.
\end{proof}

\begin{proof}[Proof of \cref{thm:zero_density}]
    We closely follow the proof of \cite{BGL14}*{Corollary 1.6}. The only difference is that we replace the squarefree restriction on $\a$ with $n$-th power free, and in the steps below \cite{BGL14}*{(5.3)} we replace the applications of \cite{BGL14}*{Theorem 1.3} with our \cref{thm:main}. For any $1 \leq X \leq Y$, this leads to
    \begin{align*}
        \sumflat_{\cN(\a) \leq N} N(\sigma, T, \a) \ll_\varepsilon (NTY)^\varepsilon T \Big( Y^{1/2-\sigma} \big(NT^{d/2}\big)^{1/2} \big(X + N + X^{2/n}N^{1-1/n} + X^{1-1/n}N^{2/n}\big)^{1/2} \\
        + Y^{2-2\sigma} + NX^{1-2\sigma} + \sup_{X \leq U \leq Y} U^{1-2\sigma} \big(N^{2/n} U^{1-1/n} + N^{1-1/n} U^{2/n} \big) \Big).
    \end{align*}
    
    The supremum over $U$ in the display above splits the range of $\sigma$ into three parts, depending on its position relative to $\tfrac{1}{2}+\frac{1}{n}$ and $1 - \frac{1}{2n}$. For $n=3$, the middle range collapses as in \cite{BGL14}. We will choose
    \begin{equation*}
        X = N^x \qquad \text{ and } \qquad Y = N^y T^{z}
    \end{equation*}
    for some $0 \leq x \leq y$ and $z \geq 0$. We aim for a final bound of the form $\ll_\varepsilon (NT)^\varepsilon N^{g_n(\sigma)} T^{h_n(\sigma)}$. Note that $h_n(\sigma) \leq 1 + \max((\frac{1}{2}-\sigma)z + \frac{d}{4}, (2-2\sigma)z)$. Making the two terms in the maximum match, we choose $z = \frac{d}{6-4\sigma}$, so that $h_n(\sigma) \leq 1+\frac{d(1-\sigma)}{3-2\sigma}$ for all $\frac{1}{2}\leq \sigma \leq 1$. 
    
    Let us now compute the exponent of $N$. Denote $q := 2\sigma - 1$. If $\frac{1}{2}\leq \sigma \leq \frac{1}{2}+\frac{1}{n}$, so that $0 \leq nq \leq 2$, then choose $x = \frac{1}{2}$ and $y = \frac{2}{4-nq} \in[\frac{1}{2}, 1]$. This simple choice of $x$ and the restriction on $\sigma$ give the final bound $g_n(\sigma) \leq \max(1-\frac{qy}{2}, 1-\frac{1}{n} + (\frac{2}{n}-q)y) = 1 - \frac{q}{4-nq}$.

    If $\frac{1}{2}+\frac{1}{n} \leq \sigma \leq 1-\frac{1}{2n}$, so that $2 \leq nq \leq n-1$, denote $D :=nq(2(n-1)-nq)-2(n-1)$. Taking $x=\frac{nq(n-2)-(n-1)}{D} \in [\frac{1}{2}, 1]$ and $y = \frac{nq(2n-5)-2(n-2)}{D} \in [1, 2]$, we obtain the bound $g_n(\sigma) \leq \max(1-\frac{1}{2n} + \frac{x}{n} -\frac{qy}{2}, \frac{2}{n} + (1 - \frac{1}{n} - q)y, 1 - \frac{1}{n} + (\frac{2}{n}-q)x) = \frac{(2n-3)q(n+1-nq)-2(n-1)}{D}$.

    If $1-\frac{1}{2n} \leq \sigma \leq 1$, so that $n-1 \leq nq \leq n$, denote $\Delta :=1-n(1-q)^2$. Taking $x= \frac{nq-n+2}{\Delta} \in [\frac{n}{n-1}, 2]$ and $y = \frac{(n+2)q-n+1}{\Delta} \in [2, 3]$, we obtain in this final case the bound $g_n(\sigma) \leq \max(\frac{1}{2}+\frac{1}{n}+\frac{(n-1)x}{2n}-\frac{qy}{2}, (1-q)y, \frac{2}{n}+(1-\frac{1}{n}-q)x) = \frac{(1-q)((n+2)q-n+1)}{\Delta}$. 

    Combining the bounds for each range gives the claimed result.
\end{proof}

\begin{remark}
    The exponent $1+\frac{d(1-\sigma)}{3-2\sigma}$ of $T$ is chosen for simplicity, and is not optimal.
\end{remark}


\bibliographystyle{alpha}
\bibliography{references}

\end{document}